\def\rr{{\mathbb R}}
\def\rn{{{\rr}^n}}
\def\zz{{\mathbb Z}}
\def\nn{{\mathbb N}}
\def\ch{{\mathcal H}}
\def\cc{{\mathbb C}}
\def\cs{{\mathcal S}}
\def\cx{{\mathcal X}}
\def\bd{{ \mathbb D }}
\def\cd{{ \mathcal D }}
\def\cg{{\mathcal G}}
\def\cq{{\mathcal Q}}
\def\cp{{\mathcal P}}
\def\cm{{\mathcal M}}
\def\cb{{\mathcal B}}
\def\ca{{\mathcal A}}
\def\cy{{\mathcal Y}}
\def\fz{\infty}
\def\az{\alpha}
\def\lz{\lambda}
\def\dz{\delta}
\def\ez{\epsilon}
\def\kz{\kappa}
\def\bz{\beta}
\def\gz{{\gamma}}
\def\vz{\varphi}
\def\sz{\sigma}
\def\wz{\widetilde}
\def\ls{\lesssim}
\def\gs{\gtrsim}
\def\r{\right}
\def\lf{\left}
\def\diam{{\mathop\mathrm{\,diam\,}}}
\def\aoti{{\mathop\mathrm {\,AOTI\,}}}
\def\supp{{\mathop\mathrm{\,supp\,}}}
\def\loc{{\mathop\mathrm{\,loc\,}}}
\def\lip{{\mathop\mathrm{\,Lip}}}
 \def\ocg{{\mathring{\cg}}}
\def\bmo{{\mathrm{\,BMO\,}}}
\def\bint{{\ifinner\rlap{\bf\kern.25em--}
\int\else\rlap{\bf\kern.45em--}\int\fi}\ignorespaces}
\def\dbint{\displaystyle\bint}
\def\bbint{{\ifinner\rlap{\bf\kern.25em--}
\hspace{0.078cm}\int\else\rlap{\bf\kern.45em--}\int\fi}\ignorespaces}
\newtheorem{thm}{Theorem}[section]
\newtheorem{lem}{Lemma}[section]
\newtheorem{prop}{Proposition}[section]
\newtheorem{rem}{Remark}[section]
\newtheorem{cor}{Corollary}[section]
\newtheorem{defn}{Definition}[section]
\numberwithin{equation}{section}
\begin{document}

\arraycolsep=1pt

\title{\Large\bf Pointwise Characterizations of Besov and  Triebel-Lizorkin Spaces and Quasiconformal Mappings
\footnotetext{\hspace{-0.35cm}
\noindent{2000 {\it Mathematics Subject Classification:}}
Primary: 30C65; Secondary: 42B35, 42B25, 46E35, 30L10
\endgraf  {\it Key words and phases:}
quasiconformal mapping, quasisymmetric mapping,
fractional Haj\l asz gradient,
Besov space, Haj\l asz-Besov space, Triebel-Lizorkin space,  Haj\l asz-Triebel-Lizorkin space,
grand Besov space, grand Triebel-Lizorkin space,
metric measure space
\endgraf Dachun Yang was supported by the National
Natural Science Foundation (Grant No. 10871025) of China. Pekka Koskela and
Yuan Zhou were supported by the Academy of Finland grants 120972, 131477.
}}
\author{Pekka Koskela, Dachun Yang and Yuan Zhou}
\date{ }
\maketitle

\begin{center}
\begin{minipage}{13.5cm}\small
{\noindent{\bf Abstract}\quad
In this paper, the authors characterize, in terms of pointwise inequalities, 
the classical Besov spaces $\dot B^s_{p,\,q}$ and Triebel-Lizorkin spaces $\dot F^s_{p,\,q}$
for all $s\in(0,\,1)$ and $p,\,q\in(n/(n+s),\,\infty],$ both in ${\mathbb R}^n$ and in
the metric measure spaces enjoying the doubling and reverse doubling
properties.
Applying this characterization, the authors prove that
quasiconformal mappings
preserve $\dot F^s_{n/s,\,q}$ on $\rn$ for all $s\in(0,\,1)$
and $q\in(n/(n+s),\,\infty]$.
A metric measure space version of the above morphism property is also
established. 
}
\end{minipage}
\end{center}

\medskip

\section{Introduction\label{s1}}

\hskip\parindent

We begin by recalling the metric definition of quasiconformal mappings
and the definition of quasisymmetric mappings; see \cite{tv}.
Let $(\cx,\,d_\cx)$ and $(\cy,\,d_\cy)$ be metric spaces and $f:\ \cx\to \cy$
be a homeomorphism.
If there exists $H\in(0,\,\fz)$ such that for all
 $x\in\cx$,
$$\limsup_{r\to 0}\frac{\sup\{d_\cy(f(x),\,f(y)):\ d_\cx(x,\,y)\le r\}}
{\inf\{d_\cy(f(x),\,f(y)):\ d_\cx(x,\,y)\ge r\}}\le H,$$
then $f$ is called {\it quasiconformal}.
Moreover, if there exists a homeomorphism $\eta: [0,\,\fz) \to [0,\,\fz)$ such that
for all $a,\,b,\,x\in\cx$ with $x\ne b$,
$$
 \frac{d_\cy(f(x),\,f(a))}{d_\cy(f(x),\,f(b))}\le  \eta\lf(\frac{d_\cx(x,\,a)}{d_\cx(x,\,b)}\r),
$$
then $f$ is called {\it $\eta$-quasisymmetric}, and sometimes, simply, quasisymmetric.
Every quasisymmetric mapping is quasiconformal,
but the converse is always not true; see, for example, \cite{hk98} and the
references therein.

Let $n>1$ and $\cx=\cy=\rr^n$ equipped with the usual Euclidean distance.
Then quasiconformality is equivalent with quasisymmetry and further with
the analytic conditions that the first order distributional partial derivatives
of $f$ are locally integrable and
$$|Df(x)|^n \le K J(x,f)$$
almost everywhere (assuming that $f$ is orientation preserving).
The well-known result that the Sobolev space
$\dot W^{1,\,n}$ is invariant under quasiconformal mappings on $\rn$
then comes as no surprise; see, for example,
\cite[Lemma 5.13]{k09}. By a function space being invariant under
quasiconformal mappings we mean that both $f$ and $f^{-1}$ induce
a bounded composition operator.
Reimann \cite{r74} proved that also $\bmo$ is quasiconformally invariant by
employing the reverse  H\"older inequalities of Gehring \cite{g73}
for the Jacobian of a quasiconformal mapping. Both the above two invariance
properties essentially characterize quasiconformal mappings \cite{r74}.
These results extend to the setting of Ahlfors regular
metric spaces that support a suitable Poincar\'e inequality \cite{hk98},
\cite{km98}.
There are some further function spaces whose quasiconformal invariance
follows from the above results.
First of all, the trace space of $\dot W^{1,\,n+1}(\rr^{n+1})$ is the homogeneous
Besov space $\dot \cb^{n/(n+1)}_{n+1} (\rr^{n})$; see Section \ref{s4} for the
definition. Because each quasiconformal
mapping of $\rr^{n }$ onto itself extends to a quasiconformal mapping of
$\rr^{n+1}$ onto itself \cite{TVa}, one concludes the invariance of
$\dot \cb^{n/(n+1)}_{n+1} (\rr^n)$ with a bit of additional work.
Further function spaces that are invariant under quasiconformal changes
of variable are obtained using interpolation. For this,
it is convenient to work with Triebel-Lizorkin spaces, whose definitions
will be given in Section \ref{s3}. Recall
that $\bmo(\rn)=\dot F^0_{\fz,\,2}(\rn),$
$\dot W^{1,\,n}(\rn)=\dot F^1_{n,\,2}(\rn),$ and
$\dot \cb^{n/(n+1)}_{n+1} (\rr^n) =\dot F^{n/(n+1)}_{n+1,\, n+1}
(\rr^n).$ By interpolation, one concludes that also the Triebel-Lizorkin
spaces $\dot F^s_{n/s,\,2}(\rn)$ are invariant for all $s\in(0,\,1)$ and so are
$\dot F^s_{n/s,\,q}(\rn)$ when $s\in(0,\,n/(n+1))$ and $q=2n/(n-(n-1)s)$ or when
$s\in(n/(n+1),\,1)$ and $q=2/((n-1)s-n+2).$ Notice that above the allowable values
of $q$ satisfy $2<q<n/s.$

Recently, Bourdon and Pajot \cite{bp03}
(see also \cite{b07}) proved a general result for quasisymmetric
mappings, which, in the setting of $\rn,$ shows that the Triebel-Lizorkin
space $\dot F^s_{n/s,\,n/s}(\rn)$
is quasiconformally invariant, for each $s\in(0,\,1).$
Notice that the norms of all the Triebel-Lizorkin spaces considered above
are conformally invariant: invariant under translations, rotations and scalings
of $\rn.$ It is then natural to inquire if all such Triebel-Lizorkin
spaces are quasiconformally invariant.

Our first result shows that this is essentially the case.

\begin{thm}\label{t1.1}
Let $n\ge 2,\ s\in (0,1)$ and $q\in(n/(n+s),\,\fz]$.
Then $\dot F^{s}_{n/s,\,q}(\rr^n)$ is invariant under quasiconformal
mappings of $\rr^n$.
\end{thm}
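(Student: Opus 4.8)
The plan is to reduce the invariance of $\dot F^s_{n/s,\,q}(\rn)$ to a pointwise description of this space that interacts well with the metric distortion of a quasiconformal map, together with the Gehring-type reverse H\"older control on the Jacobian. Concretely, the paper announces (and will prove in the body) a characterization of $\dot F^s_{p,\,q}$ via fractional Haj\l asz gradients: a function $u$ lies in $\dot F^s_{p,\,q}$ precisely when there is a sequence $\vec{g}=\{g_k\}_{k\in\zz}$ of nonnegative measurable functions with
$$
|u(x)-u(y)|\le d(x,y)^s\bigl(g_k(x)+g_k(y)\bigr)\qquad\text{whenever }2^{-k-1}\le d(x,y)<2^{-k},
$$
and with $\|\{g_k\}\|_{\ell^q}$ in $L^{n/s}$ — this is the ``grand'' Triebel-Lizorkin quantity, and it is comparable to the classical norm for $s\in(0,1)$, $q\in(n/(n+s),\infty]$. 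The first step is therefore to invoke that characterization so that the hypothesis ``$u\in\dot F^s_{n/s,\,q}(\rn)$'' becomes the existence of such a gradient sequence with controlled norm.

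The second step is the geometric heart: given a quasiconformal (hence quasisymmetric) $f:\rn\to\rn$ and $u\in\dot F^s_{n/s,\,q}$, I would construct a fractional Haj\l asz gradient for $v:=u\circ f^{-1}$ out of one for $u$. If $\{g_k\}$ is a gradient for $u$ at the scale $2^{-k}$, then for $x',y'$ in the target with $d(x',y')\sim 2^{-j}$, writing $x=f^{-1}(x')$, $y=f^{-1}(y')$, quasisymmetry forces $d(x,y)$ to lie in a controlled range of scales around $\diam f^{-1}(B(x',d(x',y')))$; so one sets $h_j(x'):=\sum_{k}\lambda_{j,k}\,g_k(f^{-1}(x'))\,L_f(x')^{s}$ where $L_f(x')$ measures the local radial distortion $\sup\{d(f^{-1}(x'),f^{-1}(z')):d(x',z')\le d(x',y')\}/d(x',y')$, and the coefficients $\lambda_{j,k}$ encode the (quasisymmetrically bounded) scale mismatch. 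The pointwise gradient inequality for $v$ then follows from the one for $u$ after absorbing the distortion factor. This is the step I expect to be the main obstacle, because the scales do not match up cleanly under $f$: a single annulus $\{d(x',y')\sim 2^{-j}\}$ in the target pulls back to a union of many annuli in the source whose number and location depend on the point, so the bookkeeping (choosing $\lambda_{j,k}$, keeping the $\ell^q$-sum summable uniformly) is delicate, and the only thing that saves it is that $L_f$ is essentially $\max\{|Df^{-1}|,\;|Df^{-1}|^{-1}J(f^{-1})\}$-comparable and the scale spread is bounded in terms of $\eta$.

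The third step is the integral estimate. One must bound $\|\,\|\{h_j\}\|_{\ell^q}\,\|_{L^{n/s}(\rn)}$ by a constant times $\|\,\|\{g_k\}\|_{\ell^q}\,\|_{L^{n/s}(\rn)}$. After the change of variables $x'=f(x)$ — which introduces the factor $J(x,f)$ — the extra weight coming from $L_f(f(x))^{s}$ raised to the power $n/s$ is (quasiconformally) comparable to $|Df(x)|^{n}$, which by the analytic definition of quasiconformality is at most $K\,J(x,f)$, and $J(x,f)\,dx$ is exactly the pullback of Lebesgue measure; the reverse H\"older inequality of Gehring for $J(\cdot,f)$ is what lets one sum the scale-mixing coefficients without loss, since it upgrades local $L^{n/s}$ control to a self-improving $L^{(n/s)(1+\ez)}$ estimate that dominates the geometric-series factors. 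Thus $\|v\|_{\dot F^s_{n/s,\,q}}\lesssim \|u\|_{\dot F^s_{n/s,\,q}}$. Finally, since $f^{-1}$ is again quasiconformal with the same structure, the same argument applied to $f^{-1}$ gives the reverse inequality, so $f$ induces a bounded composition operator with bounded inverse, which is the asserted invariance. I would remark that no lower bound on $q$ beyond $n/(n+s)$ is needed precisely because the pointwise/grand characterization remains valid in that full range, which is what extends the conclusion past the previously known exponents obtained by interpolation.
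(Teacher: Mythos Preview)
Your high-level plan matches the paper's: identify $\dot F^s_{n/s,\,q}(\rn)$ with the Haj\l asz--Triebel--Lizorkin space $\dot M^s_{n/s,\,q}(\rn)$ via Theorem~\ref{t1.2}, prove that $\dot M^s_{n/s,\,q}(\rn)$ is quasiconformally invariant (Theorem~\ref{t1.3}), and combine. The substantive differences are in how you propose to execute the second step.

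Your gradient for the composition is \emph{pointwise}, $h_j(x')=\sum_k\lambda_{j,k}\,g_k(f^{-1}(x'))\,L_f(x')^s$. The paper instead defines $h_j$ through an \emph{average} of $\vec g$ over the image ball $f(B(x,2^{-j}))$; see \eqref{e5.2}. The averaging is not cosmetic: after the change of variables your pointwise formula forces a comparison of the pointwise Jacobian $J_f(x)$ against the ball average $|f(B(x,r))|/|B(x,r)|$, which is an $A_1$-type bound that $J_f$ need not satisfy. With the averaged $h_j$, Fubini and the scale-counting estimate \eqref{e5.5} finish the case $q=n/s$ with no reverse H\"older at all, and for $q>n/s$ a single application of H\"older plus Gehring produces a maximal-function bound to which the Fefferman--Stein vector-valued inequality applies directly.

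The more serious gap is that you treat all $q$ uniformly, whereas the argument splits sharply at $q=n/s$. For $q\in(0,\,n/s)$ the paper abandons the construction \eqref{e5.2} entirely and proves instead the pointwise estimate \eqref{e5.7}, controlling $|u\circ f(x)-u\circ f(y)|$ by sums of $L^\dz$-oscillations of $u$ over image balls with $\dz<1$. This requires medians and non-increasing rearrangements (so that sub-$L^1$ ``averages'' are meaningful), followed by the fractional Sobolev--Poincar\'e inequality of Lemma~\ref{l2.3} with exponent $p<1$, and only then Gehring's reverse H\"older with a carefully chosen exponent tied to $\dz$ and $R_f$. Your Step~3 invokes Gehring to ``sum the scale-mixing coefficients,'' but the scale bookkeeping (which \eqref{e5.5} handles in one line) is not the real obstacle here; it is obtaining any usable oscillation bound at exponent below $1$ after composing with $f$, and nothing in your sketch addresses that.
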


The assumption $q>n/(n+s)$ may well be superficial in Theorem \ref{t1.1}, because
of the way it appears in our estimates. Indeed,
the proof of the above theorem is rather indirect: we establish the
quasiconformal invariance of a full scale of spaces defined by
means of pointwise inequalities initiated in the work of Haj\l asz \cite{h96} and
verify that, for most of the associated parameters, these spaces are
Triebel-Lizorkin spaces. Let us introduce the necessary notation.

In what follows, we say that $(\cx,\, d,\,\mu)$ is a {\it metric measure space} if
$d$ is a metric on $\cx$ and $\mu$
 a regular Borel measure on $\cx$ such that all balls defined by $d$
have finite and positive measures.

\begin{defn}\label{d1.1}\rm
Let $(\cx,\,d,\,\mu)$ be a metric measure space.
Let $s\in(0,\,\fz)$ and $u$ be a measurable function on $\cx$.
A sequence of nonnegative measurable functions,
${\vec g}\equiv\{g_k\}_{k\in\zz}$, is called
a {\it fractional $s$-Haj\l asz gradient} of $u$ if there exists
 $E\subset\cx$ with $\mu(E)=0$ such that for all $k\in\zz$ and
$x,\, y\in\cx\setminus E$
satisfying $2^{-k-1}\le d(x,\,y)< 2^{-k}$,
 \begin{equation}\label{e1.1}
|u(x)-u(y)|\le [d(x,\,y)]^s[g_k(x)+g_k(y)].
\end{equation}
Denote by $\bd^s(u)$ the {\it collection of all fractional $s$-Haj\l asz
gradients of $u$}.
\end{defn}

In fact,
${\vec g}\equiv\{g_k\}_{k\in\zz}$
above is not really a gradient. One should view it, in the Euclidean
setting (at least when $g_k=g_j$ for all $k,j$), as a maximal function
of the usual gradient. Relying on this concept we now
introduce counterparts of Besov  and
Triebel-Lizorkin spaces.
For simplicity, we only deal here with the case $p\in(0,\,\fz);$ the remaining
case $p=\fz$ is given in Section \ref{s2}.
In what follows, for $p,\,q\in(0,\,\fz]$, we always write
$\|\{g_j\}_{j\in\zz}\|_{ \ell^q}\equiv\{\sum_{j\in\zz}|g_j|^q\}^{1/q}$ when $q<\fz$
and $\|\{g_j\}_{j\in\zz}\|_{ \ell^\fz}\equiv \sup_{j\in\zz}|g_j|$,
$$\|\{g_j\}_{j\in\zz}\|_{L^p(\cx,\,\ell^q)}\equiv \|\|\{g_j\}_{j\in\zz}\|_{ \ell^q}\|_{L^p(\cx )}$$
and
$$\|\{g_j\}_{j\in\zz}\|_{\ell^q(L^p(\cx))}\equiv \|\{\|g_j\|_{L^p(\cx )}\}_{j\in\zz}\|_{ \ell^q}.$$

\begin{defn}\rm\label{d1.2}
Let $(\cx,\,d,\,\mu)$ be a metric measure space,
$s,\,p\in (0,\fz)$  and $q\in(0,\,\fz]$.

(i) The {\it homogeneous Haj\l asz-Triebel-Lizorkin space}
$\dot M^s_{p,\,q}(\cx)$ is the
space of all measurable functions $u$ such that
$$\|u\|_{\dot M^s_{p,\,q} (\cx)}\equiv\inf_{\vec g\in \bd^s (u)}\lf\| \vec g
\r\|_{L^p(\cx,\,\ell^q)}<\fz.$$

(ii) The {\it homogeneous Haj\l asz-Besov space} $\dot N^s_{p,\,q}(\cx)$ is the
space of all measurable functions $u$ such that
$$\|u\|_{\dot N^s_{p,\,q}(\cx)}\equiv\inf_{\vec g\in \bd^s (u)}\lf\| \vec g
\r\|_{\ell^q(L^p(\cx))}<\fz. $$
\end{defn}

Some properties and useful characterizations of $\dot M^s_{p,\,q}(\cx)$ and $\dot N^s_{p,\,q}(\cx)$
are given in Section \ref{s2}. In particular,
denote by $\dot M^{s,\,p}(\cx)$ the Haj\l asz-Sobolev space as in
Definition \ref{d2.2}.
 Then $\dot M^{s,\,p}(\cx)=\dot M^{s}_{p,\,\fz}(\cx)$ for
$s,\,p\in(0,\,\fz)$ 
as proved in Proposition \ref{p2.1}.

\begin{thm}\label{t1.2} Let $n\in\nn$.

(i) If $s\in(0,\,1)$,  $p\in(n/(n+s),\,\fz)$ and $q \in(n/(n+s),\,\fz]$, then
$\dot M^s_{p,\,q} (\rn)=\dot F^s_{p,\,q}(\rn)$.

(ii) If $s\in(0,\,1)$, $p \in(n/(n+s),\,\fz)$ and  $ q \in(0,\,\fz]$,
then
$\dot N^s_{p,\,q} (\rn)=\dot B^s_{p,\,q}(\rn)$.
\end{thm}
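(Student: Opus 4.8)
The plan is to prove Theorem~\ref{t1.2} by establishing the two-sided norm equivalence $\|u\|_{\dot F^s_{p,\,q}(\rn)}\approx\|u\|_{\dot M^s_{p,\,q}(\rn)}$ (and analogously for $\dot B^s_{p,\,q}$ versus $\dot N^s_{p,\,q}$), via the well-known difference/ Triebel--Lizorkin characterizations of $\dot F^s_{p,\,q}$ and $\dot B^s_{p,\,q}$ for $s\in(0,1)$. Recall that for $s\in(0,1)$ and $p,q$ in the stated ranges one has (a homogeneous analogue of) the Gagliardo--Besov norm
\begin{equation*}
\|u\|_{\dot B^s_{p,\,q}(\rn)}\approx\lf\|\lf\{2^{ks}\lf(\int_{|h|\sim 2^{-k}}\|u(\cdot+h)-u(\cdot)\|_{L^p(\rn)}^p\,dh\bigg/2^{-kn}\r)^{1/p}\r\}_{k\in\zz}\r\|_{\ell^q},
\end{equation*}
and a similar $L^p(\rn,\ell^q)$-version for $\dot F^s_{p,\,q}$ with the order of the $L^p$ and $\ell^q$ norms interchanged; for $p,q\le 1$ one uses the known subtleties that still make these characterizations valid in the full range $p,q>n/(n+s)$ (this is where the lower restriction $n/(n+s)$ enters). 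I would cite the relevant literature (Triebel, and the works on $\dot F^s_{p,\,q}$ for small $p$) for these facts rather than reprove them.

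First I would prove the inclusion $\dot M^s_{p,\,q}(\rn)\hookrightarrow\dot F^s_{p,\,q}(\rn)$ (resp.\ $\dot N\hookrightarrow\dot B$). Given $u$ and a fractional $s$-Haj\l asz gradient $\vec g=\{g_k\}\in\bd^s(u)$, the defining inequality \eqref{e1.1} shows that for $2^{-k-1}\le|h|<2^{-k}$ one has $|u(x+h)-u(x)|\le |h|^s[g_k(x+h)+g_k(x)]$ for a.e.\ $x$. Plugging this into the difference characterization, using translation invariance of the $L^p$ norm and the elementary bound $2^{-k}\sim|h|$, gives $2^{ks}(\fint_{|h|\sim 2^{-k}}\|u(\cdot+h)-u\|_{L^p}^p\,dh)^{1/p}\lesssim\|g_k\|_{L^p}$ in the Besov case, hence $\|u\|_{\dot B^s_{p,\,q}}\lesssim\|\vec g\|_{\ell^q(L^p)}$; taking the infimum over $\vec g$ yields $\dot N^s_{p,\,q}\hookrightarrow\dot B^s_{p,\,q}$. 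The Triebel--Lizorkin case is the same computation carried out pointwise in $x$ before taking the $L^p$ norm, using $\|\{2^{ks}(\fint_{|h|\sim2^{-k}}|u(x+h)-u(x)|^p\,dh)^{1/p}\}_k\|_{\ell^q}\lesssim\|\{g_k(x)+\text{(averaged }g_k)\}_k\|_{\ell^q}$ together with the Fefferman--Stein vector-valued maximal inequality (valid since $p,q>n/(n+s)\ge$ the critical exponent), to absorb the averaged $g_k(x+h)$ term. This direction is relatively routine.

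The reverse inclusion $\dot F^s_{p,\,q}(\rn)\hookrightarrow\dot M^s_{p,\,q}(\rn)$ (resp.\ $\dot B\hookrightarrow\dot N$) is the substantive part. The plan is: given $u\in\dot F^s_{p,\,q}$, define for each $k\in\zz$ a candidate gradient $g_k$ by an $s$-scaled local oscillation, e.g.
\begin{equation*}
g_k(x)\equiv\sup_{j\ge k-2}2^{js}\,2^{jn}\int_{B(x,\,2^{-j})}\fint_{B(y,\,2^{-j})}|u(y)-u(z)|\,dz\,dy,
\end{equation*}
or, more convenient for the difference characterization, $g_k(x)\equiv \lf(\sum_{j\ge k-2}\big[2^{js}\,(\fint_{|h|\le 2^{-j}}|u(x+h)-u(x)|^{p_0}\,dh)^{1/p_0}\big]^{q}\r)^{1/q}$ with a suitable auxiliary exponent $p_0<\min\{p,q,1\}$ (or $p_0=1$ when $p,q>1$), so that a telescoping/chaining argument over dyadic scales between $x$ and $y$ produces \eqref{e1.1}. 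The key algebraic point is that for $2^{-k-1}\le d(x,y)<2^{-k}$ one chains $u(x)$ to $u(y)$ through averages over a decreasing sequence of balls straddling the segment $[x,y]$, each comparison step of scale $2^{-j}$ ($j\ge k$) being controlled by $[d(x,y)]^s 2^{-(j-k)s}\cdot(\text{local }L^{p_0}\text{-oscillation at }x\text{ or }y\text{ and scale }2^{-j})$; summing the geometric factor $2^{-(j-k)s}$ and bounding each oscillation by the corresponding term inside $g_k$ gives \eqref{e1.1}, so $\{g_k\}\in\bd^s(u)$. Then one must show $\|\{g_k\}\|_{L^p(\rn,\ell^q)}\lesssim\|u\|_{\dot F^s_{p,\,q}}$: rewrite $g_k$ in terms of the Peetre-type maximal functions or the difference characterization of $\dot F^s_{p,\,q}$, and control the passage from the ``$h$-ball average at a point'' to the genuine $\dot F^s_{p,\,q}$ square-function by the vector-valued maximal function theorem (again using $p,q>n/(n+s)$, which is exactly the threshold making the $p_0$-averaged maximal operator bounded on $L^p(\ell^q)$).

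The main obstacle I anticipate is handling the full low-exponent range $p,q\in(n/(n+s),1]$: there the natural ``average over a ball'' operators are not bounded on $L^p$ or $L^p(\ell^q)$ directly, so the chaining argument must be set up with the auxiliary exponent $p_0$ chosen strictly below $n/(n+s)\cdot$(something), and one needs the sharp form of the Fefferman--Stein inequality for the fractional maximal function $\cm_{p_0}$ together with the known equivalence of $\dot F^s_{p,\,q}$ (for $s\in(0,1)$, $p,q>n/(n+s)$) with a difference-based quasi-norm; carefully tracking that the same single exponent $p_0$ works simultaneously for the upper and lower estimates, and that the constructed $\{g_k\}$ lies in $L^p(\rn,\ell^q)$ with the right bound, is the delicate bookkeeping. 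A secondary point is that one should verify the definitions are consistent for $p=\fz$ (deferred to Section~\ref{s2}) and note that part~(ii) for $q\in(0,n/(n+s)]$ still holds because the Besov case only needs $\ell^q(L^p)$ summability, which is less demanding and does not require the vector-valued maximal inequality in $q$; there the restriction on $q$ can be dropped, consistent with the statement.
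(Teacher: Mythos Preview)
Your overall strategy via the first-difference characterization of $\dot F^s_{p,\,q}$ and $\dot B^s_{p,\,q}$ is a legitimate alternative to the paper's route, which instead passes through the \emph{grand} Triebel--Lizorkin and Besov spaces $\ca\dot F^s_{p,\,q}$, $\ca\dot B^s_{p,\,q}$ built from smooth test functions with one vanishing moment: the paper first shows $\ca\dot F^s_{p,\,q}=\dot F^s_{p,\,q}$ using Calder\'on reproducing formulas and Frazier--Jawerth almost-diagonal operators (Theorem~\ref{t3.1}), and then proves $\dot M^s_{p,\,q}=\ca\dot F^s_{p,\,q}$ directly (Theorem~\ref{t3.2}), constructing the Haj\l asz gradient as $g_k(x)=2^{ks}\sup_{\phi\in\ca}|\phi_{2^{-k}}\ast u(x)|$. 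Your difference-based approach is arguably more elementary for $s\in(0,\,1)$ on $\rn$, but it outsources the hard work to the cited characterization (which is itself delicate for $p,\,q\le 1$), whereas the paper's smooth-kernel route extends uniformly to $s=1$ and to RD-spaces (Theorem~\ref{t4.1}).

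There is, however, a genuine gap in your reverse inclusion. Both candidate gradients you write down aggregate \emph{all} scales $j\ge k-2$ into $g_k$: the first via a supremum, the second via an $\ell^q$-sum. Either choice makes $g_k$ essentially monotone (nonincreasing) in $k$, so that $\|\{g_k\}_{k\in\zz}\|_{\ell^q}^q=\sum_{k\in\zz}\sum_{j\ge k-2}b_j^q=\sum_{j\in\zz}b_j^q\cdot\sharp\{k\le j+2\}=\fz$ whenever any $b_j\ne 0$ and $q<\fz$. Thus neither sequence can lie in $L^p(\rn,\,\ell^q)$, and the claimed bound $\|\{g_k\}\|_{L^p(\rn,\,\ell^q)}\ls\|u\|_{\dot F^s_{p,\,q}}$ fails as written.

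The fix is to keep $g_k$ a \emph{single-scale} quantity, say $g_k(x)\equiv 2^{ks}\big(\fint_{|h|\le 2^{-k}}|u(x+h)-u(x)|^{p_0}\,dh\big)^{1/p_0}$, and to recognise that your chaining argument does not produce \eqref{e1.1} directly but rather the weaker inequality
\[
|u(x)-u(y)|\ls\sum_{j\ge k_0}2^{-js}\,[g_j(x)+g_j(y)],\qquad 2^{-k_0-1}\le|x-y|<2^{-k_0},
\]
i.e.\ $\vec g\in\wz\bd^{s,\,s,\,N}(u)$ in the notation of Section~\ref{s2}. One then needs an additional lemma (this is exactly Theorem~\ref{t2.1} in the paper) asserting that membership of $\vec g$ in $\wz\bd^{s,\,s,\,N}(u)$ with $\|\vec g\|_{L^p(\rn,\,\ell^q)}<\fz$ already gives $u\in\dot M^s_{p,\,q}$ with comparable norm; the point is that the map $\{g_j\}\mapsto\{\sum_{j\ge k}2^{(k-j)s}g_j\}_k$ is bounded on $L^p(\rn,\,\ell^q)$ by a geometric-series/H\"older argument. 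Once you insert this step your scheme goes through.
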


The equivalences above are proven via grand Besov spaces
$\ca\dot B^s_{p,\,q}(\rn)$ and  grand Triebel-Lizorkin spaces
$\ca\dot F^s_{p,\,q}(\rn)$ defined
 in Definition \ref{d3.2} below; see Section \ref{s3}.
Theorem \ref{t1.2} and Theorem \ref{t3.2} give pointwise characterizations 
for Beosv and Triebel-Lizorkin spaces and have independent interest.
For predecessors of such results, see \cite{ks08,y03}. 

Relying on Theorem  \ref{t2.1}, Lemmas \ref{l2.1} and \ref{l2.3} below 
and several properties of quasiconformal mappings  we obtain  the following invariance property that, 
when combined with Theorem \ref{t1.2}, yields Theorem \ref{t1.1}; see Section \ref{s5}.

\begin{thm}\label{t1.3} Let $n\ge 2$,  $s\in (0,1]$ and $q\in(0,\,\fz]$.
Then
$\dot M^{s}_{n/s,\,q}(\rn)$ is invariant under quasiconformal mappings
of $\rn.$
\end{thm}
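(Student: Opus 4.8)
The plan is to work directly with the pointwise definition of $\dot M^s_{n/s,\,q}(\rn)$ and to exploit two structural facts about a quasiconformal $f:\rn\to\rn$: first, that $f$ is quasisymmetric, hence for every ball $B$ the image $f(B)$ is comparable to a ball; second, the measure $\mu_f(A)\equiv|f(A)|$ is a doubling measure whose density $J(\cdot,f)$ satisfies a Gehring-type reverse H\"older inequality (this is exactly the input from ``several properties of quasiconformal mappings'' alluded to before the statement). Since $n/s$ is precisely the exponent that makes $\int_{\rn}|g|^{n/s}\,dx$ behave like an ``$L^1$'' quantity with respect to the $s$-scaled geometry, the exponent $n/s$ is what makes the Jacobian change of variables match the Haj\l asz scaling $[d(x,y)]^s$; this is the reason the theorem is stated only for $p=n/s$.

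The key steps, in order. (1) Let $u\in\dot M^s_{n/s,\,q}(\rn)$ with fractional $s$-Haj\l asz gradient $\vec g=\{g_k\}_{k\in\zz}$, and set $v\equiv u\circ f$. The goal is to produce $\vec h=\{h_k\}\in\bd^s(v)$ with $\|\vec h\|_{L^{n/s}(\rn,\,\ell^q)}\ls\|\vec g\|_{L^{n/s}(\rn,\,\ell^q)}$. (2) Fix $x,y$ with $2^{-k-1}\le|x-y|<2^{-k}$. By quasisymmetry, $f$ maps the ball $B(x,|x-y|)$ into a set of bounded eccentricity, so $|f(x)-f(y)|$ is comparable to $\diam f(B(x,|x-y|))$, and in particular $f(x),f(y)$ lie at a scale $\rho\equiv|f(x)-f(y)|$; choose $j=j(x,y)\in\zz$ with $2^{-j-1}\le\rho<2^{-j}$. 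Applying \eqref{e1.1} for $u$ at the pair $f(x),f(y)$ gives
$$
|v(x)-v(y)|=|u(f(x))-u(f(y))|\le \rho^s\,[g_j(f(x))+g_j(f(y))]
\ls |x-y|^s\,\rho^s|x-y|^{-s}\,[g_j(f(x))+g_j(f(y))].
$$
(3) The factor $\rho/|x-y|=|f(x)-f(y)|/|x-y|$ must be absorbed. Here I would use the standard QC estimate that this ``local distortion'' is pointwise comparable to $\big(J(x,f)/|B(x,|x-y|)|\big)^{1/n}\cdot|x-y|$ up to the quasisymmetric gauge $\eta$, so that $\rho^s|x-y|^{-s}\ls \big(\bbint_{B(x,|x-y|)}J(\cdot,f)\big)^{s/n}\ls [\cm(J(\cdot,f))(x)]^{s/n}+[\cm(J(\cdot,f))(y)]^{s/n}$, where $\cm$ is the Hardy--Littlewood maximal operator. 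This produces the candidate
$$
h_k(x)\equiv \sup_{j}\ \big(g_j(f(x))+\widetilde g_j(x)\big)\,[\cm(J(\cdot,f))(x)]^{s/n},
$$
with the supremum restricted to the (boundedly many, by quasisymmetry) scales $j$ that can occur for a fixed $k$; a cleaner route is to define $h_k$ directly as the pulled-back quantity summed over the relevant band of indices $j$. (4) Finally estimate $\|\vec h\|_{L^{n/s}(\rn,\,\ell^q)}$: using H\"older with the Jacobian change of variables $\int_{\rn}\Phi(f(x))J(x,f)\,dx=\int_{\rn}\Phi(y)\,dy$, the weight $[\cm J]^{s/n}$ raised to the power $n/s$ becomes $\cm J$, and one invokes the Gehring reverse H\"older inequality to replace $\cm J$ by (a constant times, on average) $J$ itself, i.e.\ $\int (\cm J)\,\phi^{n/s}\ls \int J\,\phi^{n/s}$ after a Fefferman--Stein type argument; combined with the $\ell^q$ bookkeeping over the bounded band of scales this yields $\|\vec h\|_{L^{n/s}(\rn,\,\ell^q)}\ls\|\vec g\|_{L^{n/s}(\rn,\,\ell^q)}$. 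Applying the same argument to $f^{-1}$ (which is again quasiconformal) gives the reverse inclusion, proving invariance.

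The main obstacle I expect is Step (3)--(4): making the pointwise comparison between the ``infinitesimal'' distortion $|f(x)-f(y)|/|x-y|$ and an averaged Jacobian rigorous without assuming more regularity than quasisymmetry gives (the naive inequality $|f(x)-f(y)|^n\ls |f(B)|$ needs the ball $f(B)$ to be genuinely round, which quasisymmetry only gives up to a fixed eccentricity), and then pushing the maximal function $\cm(J(\cdot,f))$ back down to $J(\cdot,f)$ in the $L^{n/s}$ norm — this is where Gehring's self-improving reverse H\"older inequality is essential and where the borderline exponent $n/s$ is used in an absolutely crucial way. A secondary technical point is controlling the index shift $k\mapsto j(x,y)$: because $f$ is only quasisymmetric, a dyadic annulus in the domain can spread over a bounded but nontrivial range of dyadic scales in the target, so the $\ell^q$-norm must be handled with a bounded-overlap/shift argument rather than a literal index-by-index comparison; for $q<1$ one additionally uses the $q$-subadditivity of $t\mapsto t^q$ to keep the constants under control.
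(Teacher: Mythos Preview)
Your approach has a genuine gap at Step (4). The inequality you hope for,
\[
\int_{\rn} \bigl[\cm(J_f)(x)\bigr]\,\Phi(f(x))\,dx \ \ls\ \int_{\rn} J_f(x)\,\Phi(f(x))\,dx,
\]
does \emph{not} follow from the reverse H\"older inequality or from any Fefferman--Stein argument; it is essentially the pointwise bound $\cm(J_f)\ls J_f$, i.e.\ the $A_1$ condition, and Jacobians of quasiconformal maps are in general only $A_\fz$ weights, not $A_1$ (think of a radial stretch $f(x)=|x|^{\alpha-1}x$ with $\alpha>1$). Because you use the \emph{pointwise} value $g_{j}(f(x))$ in your candidate $h_k$, the distortion factor $(\rho/|x-y|)^s$ and the gradient factor decouple, and you are forced into exactly this bad inequality.

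The paper avoids this by defining $h_j(x)$ not via the point value $g_j(f(x))$ but as an \emph{average} of the $g$'s over the image ball $f(B(x,2^{-j}))$ (coming from a telescoping argument for $u\circ f(x)-u_{f(B(x,2^{-k}))}$). After a change of variables this average contains the product $(\wz g_{\cdots}\circ f)\,J_f$ under one integral, so H\"older's inequality with an exponent $p<\min\{q,\,n/s\}$ splits off a \emph{pure} Jacobian factor $J_f^{p(n-s)/[n(p-1)]}$ to which the reverse H\"older inequality applies, leaving
\[
h_j(x)\ \ls\ \Bigl[\cm\bigl((\wz g_{\cdots}\circ f)^p\,J_f^{ps/n}\bigr)(x)\Bigr]^{1/p}.
\]
Now the Jacobian sits \emph{inside} the maximal function coupled to the gradient, so the Fefferman--Stein vector-valued inequality (legitimate since $n/(sp),\,q/p>1$) returns $J_f$ itself, not $\cm(J_f)$, and the change of variables finishes the job. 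Note also that for $q<n/s$ this still does not work directly, and the paper needs a separate and more delicate argument (Case (iii)) using medians, rearrangements, and the fractional Sobolev--Poincar\'e inequality of Lemma~\ref{l2.3}; your sketch does not distinguish these regimes and the uniform ``bounded-overlap'' bookkeeping you propose is not enough there.
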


The conclusion of Theorem \ref{t1.3} was previously only known in the case $s=1$
and $q=\fz$; recall that $\dot M^1_{n,\,\fz}(\rn)=\dot M^{1,\,n}(\rn)=\dot W^{1,\,n}(\rn)$.

Our results above also extend to a class of metric measure spaces.
Indeed, let $(\cx,\,d,\,\mu)$ be a metric
measure space.
For any $x\in\cx$ and $r>0$, let $B(x,\,r)\equiv\{y\in\cx:\, d(x,\, y)<r\}$.
Recall that $(\cx, d, \mu)$ is
called an {\it RD-space} in \cite{hmy08} if there exist constants
$0<C_1\le1\le C_2$ and
$0<\kz\le n$ such that for all $x\in\cx$, $0<r<2\diam\cx $ and
$1\le\lz<2\diam \cx/r$,
\begin{equation}\label{e1.2}
C_1\lz^\kz\mu(B(x, r))
\le\mu(B(x, \lz r))\le C_2\lz^n\mu(B(x, r)),
\end{equation}
where and in what follows,
$\diam \cx\equiv\sup_{x,\,y\in \cx}d(x,y)$;
see \cite{hmy08}. In particular, if $\kz=n$, then $\cx$ is called an
{\it Ahlfors $n$-regular space}.
Moreover, $\cx$ is said to support a {\it weak $(1,\,n)$-Poincar\'e inequality} if
there exists a positive constant $C$ such that for all 
Lipschitz functions $u$,
$$\bint_B|u(x)-u_B|\,d\mu(x)\le C r
\lf\{\bint_{\lz B}[\lip(u(x))]^n\,d\mu(x)\r\}^{1/n}.$$

We then have a metric measure space version of Theorem \ref{t1.3} as follows.  

\begin{thm}\label{t1.4}
Assume that $\cx$ and $\cy$ are both Ahlfors $n$-regular spaces with $n>1$,
$\cx$ is proper and quasiconvex and supports a weak $(1,\,n)$-Poincar\'e inequality
and $\cy$ is linearly locally connected.
Let $f$ be a quasiconformal mapping from $\cx$ onto $\cy$, which maps
bounded sets into bounded sets.
Then for every $s\in (0,\,1]$,
and for all $q\in(0,\,\fz]$, $f$  induces an equivalence between
$\dot M^{s}_{n/s,\,q}(\cx)$
and $\dot M^{s}_{n/s,\,q}(\cy)$.
\end{thm}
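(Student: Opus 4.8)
The plan is to deduce Theorem~\ref{t1.4} from Theorem~\ref{t1.3} by reducing the metric setting to a modified Euclidean-style argument, i.e.\ by transporting the proof of Theorem~\ref{t1.3} to this more general class of spaces. Concretely, I would first record the structural facts about $f$ that make this possible: under the stated hypotheses (both $\cx$ and $\cy$ Ahlfors $n$-regular, $\cx$ proper, quasiconvex, supporting a weak $(1,n)$-Poincar\'e inequality, and $\cy$ linearly locally connected), a quasiconformal mapping $f$ is in fact quasisymmetric and, moreover, $f$ and $f^{-1}$ satisfy the Gehring-type reverse H\"older inequality for the volume derivative, so that $\mu(f(B))\approx$ a power of the dilatation-controlled radius and the Jacobian-type quantity $J_f(x)\equiv\limsup_{r\to0}\mu(f(B(x,r)))/\mu(B(x,r))$ is an $A_\infty$ weight with a self-improving integrability exponent slightly above $1$. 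These are exactly the properties of quasiconformal maps on $\rn$ that enter the proof of Theorem~\ref{t1.3}; I would invoke the metric-space versions (referenced in the introduction as \cite{hk98}, \cite{km98}) rather than reprove them.

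Next, by symmetry between $f$ and $f^{-1}$ it suffices to prove one direction: that $u\in\dot M^s_{n/s,q}(\cy)$ implies $u\circ f\in\dot M^s_{n/s,q}(\cx)$ with a norm bound. Here I would start from a fractional $s$-Haj\l asz gradient $\{g_k\}_{k\in\zz}$ of $u$ on $\cy$ achieving (almost) the infimum, and build a fractional $s$-Haj\l asz gradient of $u\circ f$ on $\cx$. The natural candidate is, for $x,y\in\cx$ with $2^{-k-1}\le d_\cx(x,y)<2^{-k}$, to use quasisymmetry to locate the level $\ell=\ell(k,x)$ with $2^{-\ell-1}\le d_\cy(f(x),f(y))<2^{-\ell}$ controlled (up to the $\eta$-distortion) by a bounded shift of $k$ depending on the local scaling of $f$ near $x$; then
\begin{equation}\label{e-plan}
|u(f(x))-u(f(y))|\le [d_\cy(f(x),f(y))]^s\,[g_\ell(f(x))+g_\ell(f(y))]\ls [L_f(x,2^{-k})]^s[d_\cx(x,y)]^s\,[g_\ell(f(x))+g_\ell(f(y))],
\end{equation}
where $L_f(x,r)\equiv\sup\{d_\cy(f(x),f(y)):d_\cx(x,y)\le r\}$. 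So the new gradient is essentially $G_k(x)\equiv [L_f(x,2^{-k})/2^{-k}]^s\,\widetilde g_k(f(x))$ where $\widetilde g_k$ is a suitable (bounded) rearrangement/maximal-type combination of the $g_\ell$ over the controlled window of indices $\ell$; using Theorem~\ref{t2.1} and Lemmas~\ref{l2.1} and \ref{l2.3} (the characterization allowing one to replace $\{g_k\}$ by a single maximal gradient or to shift indices without changing the space) I can assume $\widetilde g_k\approx g$ is independent of $k$, so $G_k(x)\approx [L_f(x,2^{-k})/2^{-k}]^s\, g(f(x))$.

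The crux is then the change-of-variables estimate
\[
\|\{G_k\}\|_{L^{n/s}(\cx,\ell^q)}\ls \Big\| \sup_{k}[L_f(\cdot,2^{-k})/2^{-k}]^s\, g(f(\cdot))\Big\|_{L^{n/s}(\cx)}\ls \|g\|_{L^{n/s}(\cy)},
\]
and here the exponent $n/s$ is precisely what makes it work: $[L_f(x,r)/r]^{n}$ is comparable to the volume derivative $J_f(x)$ (again by Ahlfors regularity together with the metric Gehring inequality), so $[L_f(x,2^{-k})/2^{-k}]^{s\cdot(n/s)}=[L_f(x,2^{-k})/2^{-k}]^n\approx J_f(x)$, and the $\ell^q$-in-$k$ sum is harmless after first taking the $\ell^\infty$ (sup) bound for the scaling factor and using the index-shift lemmas to absorb the window of $\ell$'s into $g$ itself; one is left with $\int_\cx J_f(x)\,[g(f(x))]^{n/s}\,d\mu(x)\approx\int_\cy [g(y)]^{n/s}\,d\mu(y)$ by the (approximate) change of variables for quasiconformal maps. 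The main obstacle I anticipate is making the ``controlled window of indices $\ell$'' argument rigorous and uniform: quasisymmetry only controls ratios, so the correspondence $k\leftrightarrow\ell$ drifts with $x$, and one must show this drift is absorbed by passing to a maximal fractional Haj\l asz gradient (via Theorem~\ref{t2.1} and Lemma~\ref{l2.3}) without losing the $L^{n/s}(\ell^q)$ control — together with checking that the metric Gehring/higher-integrability machinery and the change-of-variables formula genuinely hold under exactly the hypotheses listed (Ahlfors regularity, quasiconvexity, properness, weak $(1,n)$-Poincar\'e on $\cx$, and linear local connectivity on $\cy$), which is where the bulk of the genuinely metric-geometric work lies. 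Once that is in place, the reverse inclusion follows by applying the same argument to $f^{-1}$, which satisfies the same list of hypotheses with the roles of $\cx$ and $\cy$ interchanged, completing the equivalence.
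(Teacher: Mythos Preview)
Your high-level plan is right: the assumptions force $f$ to be quasisymmetric with $J_f$ in a reverse H\"older class, and then one transports the proof of Theorem~\ref{t1.3}. But the concrete construction you propose for the fractional gradient of $u\circ f$ has a genuine gap that would make the argument fail for every $q<\infty$.

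The problematic step is the passage ``I can assume $\widetilde g_k\approx g$ is independent of $k$''. Theorem~\ref{t2.1} and Lemmas~\ref{l2.1}, \ref{l2.3} do \emph{not} allow you to replace a sequence $\{g_k\}$ by a single $g$ without changing the space: that collapse is exactly the identification $\dot M^s_{p,\infty}=\dot M^{s,p}$ (Proposition~\ref{p2.1}), valid only for $q=\infty$. With $G_k(x)\approx[L_f(x,2^{-k})/2^{-k}]^s\,g(f(x))$ and $q<\infty$, your crux estimate already fails for $f=\mathrm{id}$, since then $G_k\equiv g$ and $\|\{G_k\}\|_{\ell^q}=\infty$. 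Relatedly, $[L_f(x,2^{-k})/2^{-k}]^n$ is comparable to the \emph{average} of $J_f$ over $B(x,2^{-k})$, not to $J_f(x)$ itself; taking the supremum over $k$ gives $\cm(J_f)(x)$, not $J_f(x)$, so the pointwise change of variables you write down is not available.

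What the paper actually does (in the proof of Theorem~\ref{t1.3}, which Theorem~\ref{t1.4} then copies) is more delicate. The new gradient $h_j(x)$ is not a pointwise composition but an integral average over $f(B(x,2^{-j}))$ of $g_\ell$ evaluated at the scale $\ell$ determined by $L_f(f^{-1}(y),2^{-j})$; the key mechanism that transfers the $\ell^q$ sum from $\cx$ to $\cy$ is the counting estimate \eqref{e5.5}: for each $y$ and each dyadic scale $2^{-k}$ on $\cy$, only boundedly many $j$'s satisfy $L_f(f^{-1}(y),2^{-j})\in[2^{-k-1},2^{-k})$. This handles $q=n/s$ directly. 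For $q>n/s$ one needs H\"older and the reverse H\"older inequality to land on a maximal function and then Fefferman--Stein; for $q<n/s$ one needs the median/rearrangement argument \eqref{e5.7} together with Lemma~\ref{l2.3} before the reverse H\"older step. None of these three mechanisms is captured by your single-$g$ reduction.

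A smaller point: the hypotheses of Theorem~\ref{t1.4} are not symmetric in $\cx$ and $\cy$ (Poincar\'e and quasiconvexity are only assumed on $\cx$), so you cannot simply say $f^{-1}$ ``satisfies the same list of hypotheses''. The paper handles this via Proposition~\ref{p5.3}(ii), which supplies absolute continuity and the Jacobian relation for $f^{-1}$ directly, without requiring a Poincar\'e inequality on $\cy$.
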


The point is that, with the assumptions of Theorem \ref{t1.4}, 
the quasiconformal mapping $f$ is actually a quasisymmetric mapping  
and its volume derivative  satisfies a suitable reverse H\"older inequality 
(see \cite[Theorem 7.1]{hk98}, \cite{kz08}  and also Proposition \ref{p5.3} below), 
which allow us to extend the proof of Theorem \ref{t1.3} to this more general setting. 
In Theorem \ref{t1.4}, both $f$ and $f^{-1}$ act as composition operators. 

We also show, see Section \ref{s4}, that the spaces $M^{s}_{n/s,\,q}(\cx)$
and $\dot M^{s}_{n/s,\,q}(\cy)$ identify with suitable Triebel-Lizorkin
spaces and thus a version of the invariance of Triebel-Lizorkin
spaces follows. Moreover, let us comment that our approach recovers
the invariance of the Besov spaces considered by Bourdon and Pajot \cite{bp03}; 
see Theorem \ref{t5.1} below.

Finally, we state some {\it conventions}. Throughout the paper,
we denote by $C$ a positive
constant which is independent
of the main parameters, but which may vary from line to line.
Constants with subscripts, such as $C_0$, do not change
in different occurrences. The notation $A\ls B$ or $B\gs A$
means that $A\le CB$. If $A\ls B$ and $B\ls A$, we then
write $A\sim B$.
Denote by $\zz$ the set of integers, $\nn$ the set of positive integers and $\zz_+\equiv\nn\cup\{0\}$.
For $\az\in\rr$, denote
by $\lfloor \az\rfloor$ the maximal integer no more than $\az$.
For any locally integrable function $f$,
we denote by $\bbint_E f\,d\mu$ the average
of $f$ on $E$, namely, $\bbint_E f\,d\mu\equiv\frac 1{\mu(E)}\int_E f\,d\mu$.

\section{Some properties of $\dot M^s_{p,\,q}(\cx)$ and $\dot N^s_{p,\,q}(\cx)$ }\label{s2}

In this section, we establish some properties of $\dot M^s_{p,\,q}(\cx)$ and $\dot N^s_{p,\,q}(\cx)$,
including the equivalence between $\dot M^s_{p,\,\fz}(\cx)$ and the Haj\l asz-Sobolev space (see Proposition \ref{p2.1}),
several equivalent characterizations for $\dot M^s_{p,\,q}(\cx)$ and $\dot N^s_{p,\,q}(\cx)$ 
(see Theorems \ref{t2.1} and \ref{t2.2}),
and Poincar\'e-type inequalities for 
$\dot M^s_{p,\,q}(\cx)$ with $\cx=\rn$ (see Lemmas \ref{l2.1} and \ref{l2.3}).

First, we introduce the Haj\l asz-Besov and Haj\l asz-Triebel-Lizorkin spaces also in the case $p=\fz$
as follows.

\begin{defn}\rm\label{d2.1}
Let $(\cx,\,d,\,\mu)$ be a metric measure space,
$s\in (0,\fz)$ and $q\in(0,\,\fz]$.

(i) The homogeneous Haj\l asz-Triebel-Lizorkin space
$\dot M^s_{\fz,\,q}(\cx)$ is the
space of all measurable functions $u$ such that
$\|u\|_{\dot M^s_{\fz,\,q} (\cx)}<\fz$,
where when   $q<\fz$,
$$\|u\|_{\dot M^s_{\fz,\,q} (\cx)}\equiv
\inf_{\vec g\in \bd^s (u)}\sup_{k\in\zz}\sup_{x\in\cx}
\lf\{\sum_{j\ge k}\bint_{B(x,\,2^{-k})} [g_j(y)]^q\,d\mu(y)\r\}^{1/q}
 $$
and when $q=\fz$,
$\|u\|_{\dot M^s_{\fz,\,\fz}(\cx)}\equiv
\inf_{\vec g\in \bd^s (u)}
\|\vec g \|_{L^\fz(\cx,\,\ell^\fz)}.
 $

(ii) The homogeneous Haj\l asz-Besov space $\dot N^s_{\fz,\,q}(\cx)$ is the
space of all measurable functions $u$ such that
$$\|u\|_{\dot N^s_{\fz,\,q}(\cx)}\equiv\inf_{\vec g\in \bd^s (u)}\lf\| \vec g
\r\|_{\ell^q(L^\fz(\cx))}<\fz. $$
\end{defn}

Then, we recall the definition of a Haj\l asz-Sobolev space 
 \cite{h96,h03} (see also \cite{y03}  for a fractional version). 

Let $(\cx,\,d,\,\mu)$ be a metric measure space.
For every $s\in (0,\fz)$ and measurable function $u$ on $\cx$, a non-negative
function $g$ is called an {\it $s$-gradient} of $u$
if there exists a set $E\subset \cx$ with $\mu(E)=0$ such that for all
$x,\ y\in\cx\setminus E$,
\begin{equation} \label{e2.1}
|u(x)-u(y)|\le [d(x,y)]^s[g(x)+g(y)].
\end{equation}
Denote by $\cd^s(u)$ the {\it collection of all $ s$-gradients of $u$}.

\begin{defn}\label{d2.2}\rm
Let $s\in(0,\,\fz)$ and $p\in (0,\fz]$. Then the
{\it homogeneous  Haj\l asz-Sobolev space} $\dot M^{s,\,p}(\cx)$ is the
set of all measurable functions $u$ such that
 $$\|u\|_{\dot M^{s,\,p}(\cx)}\equiv\inf_{g\in \cd^s(u)}\|g\|_{L^p(\cx)}<\fz.$$
\end{defn}

 Haj\l asz-Sobolev spaces naturally relate to  
  Haj\l asz-Triebel-Lizorkin spaces as follows.

\begin{prop}\label{p2.1}
 If $s\in(0,\,\fz)$ and $p\in(0,\,\fz]$, then
$\dot M^{s }_{p,\,\fz}(\cx)=\dot M^{s,\,p}(\cx)$.
\end{prop}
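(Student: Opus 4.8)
The plan is to prove the two inclusions $\dot M^{s}_{p,\,\fz}(\cx)\subset\dot M^{s,\,p}(\cx)$ and $\dot M^{s,\,p}(\cx)\subset\dot M^{s}_{p,\,\fz}(\cx)$ separately, with the first being essentially immediate and the second requiring a small construction. For the easy direction, suppose $g\in\cd^s(u)$, so that \eqref{e2.1} holds for all $x,\,y\in\cx\setminus E$. Set $g_k\equiv g$ for every $k\in\zz$. Then for any $x,\,y\in\cx\setminus E$ with $2^{-k-1}\le d(x,\,y)<2^{-k}$, inequality \eqref{e1.1} is exactly \eqref{e2.1}, so $\vec g\equiv\{g_k\}_{k\in\zz}\in\bd^s(u)$. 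Since $\|\vec g\|_{L^p(\cx,\,\ell^\fz)}=\|\sup_{k\in\zz}g_k\|_{L^p(\cx)}=\|g\|_{L^p(\cx)}$, taking the infimum over $g\in\cd^s(u)$ gives $\|u\|_{\dot M^{s}_{p,\,\fz}(\cx)}\le\|u\|_{\dot M^{s,\,p}(\cx)}$.

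For the reverse inclusion, suppose $\vec g\equiv\{g_k\}_{k\in\zz}\in\bd^s(u)$, with exceptional set $E$, $\mu(E)=0$. Define $g\equiv\sup_{k\in\zz}g_k=\|\vec g\|_{\ell^\fz}$. I claim $g\in\cd^s(u)$. Indeed, let $x,\,y\in\cx\setminus E$ with $x\ne y$; then $d(x,\,y)\in(0,\,\fz)$, so there is a unique $k\in\zz$ with $2^{-k-1}\le d(x,\,y)<2^{-k}$, and \eqref{e1.1} gives
\[
|u(x)-u(y)|\le[d(x,\,y)]^s[g_k(x)+g_k(y)]\le[d(x,\,y)]^s[g(x)+g(y)],
\]
using $g_k\le g$ pointwise; the case $x=y$ is trivial. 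Hence $g\in\cd^s(u)$ and $\|g\|_{L^p(\cx)}=\|\vec g\|_{L^p(\cx,\,\ell^\fz)}$. Taking the infimum over $\vec g\in\bd^s(u)$ yields $\|u\|_{\dot M^{s,\,p}(\cx)}\le\|u\|_{\dot M^{s}_{p,\,\fz}(\cx)}$, which completes the proof.

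There is really no main obstacle here: the statement is a bookkeeping identity that unwinds the two definitions, the only substantive observation being that for $q=\fz$ the quantity $\|\vec g\|_{L^p(\cx,\,\ell^\fz)}$ depends on $\vec g$ only through the single function $\sup_k g_k$, and that the constraint defining $\bd^s(u)$ with all $g_k$ replaced by a common $g$ collapses exactly to the constraint \eqref{e2.1} defining $\cd^s(u)$. One mild point to be careful about is the handling of the exceptional null set and the pairing of scales $k$ with distances $d(x,\,y)$, but since the dyadic annuli $\{2^{-k-1}\le d(x,\,y)<2^{-k}\}$ partition $(0,\,\fz)$, every pair $(x,\,y)$ with $x\ne y$ falls into exactly one of them, so no pair is missed. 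The argument is valid for all $s,\,p\in(0,\,\fz]$ as stated, with $p=\fz$ included without change.
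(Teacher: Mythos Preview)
Your proof is correct and follows essentially the same approach as the paper: set $g_k\equiv g$ in one direction and $g\equiv\sup_{k\in\zz}g_k$ in the other, using that $\|\vec g\|_{L^p(\cx,\,\ell^\fz)}=\|\sup_k g_k\|_{L^p(\cx)}$. You spell out a few more details (the explicit verification that $g\in\cd^s(u)$ and the partitioning of $(0,\fz)$ by dyadic annuli), but the argument is identical in substance.
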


\begin{proof}
Let $u\in\dot M^{s,\,p}(\cx)$ and $g\in\cd^s(u)$.
Taking $g_k\equiv g$,  we know that $\vec g=\{g_k\}_{k\in\zz}\in\bd^s(u)$
and $\|\vec g\|_{L^p(\cx,\,\ell^q)}=\|g\|_{L^p(\cx)}$,
which implies that $u\in \dot M^{s}_{p,\,\fz}(\cx)$ with
$\|u\|_{\dot M^{s}_{p,\,\fz}(\cx)}=\|u\|_{\dot M^{s,\,p}(\cx)}$.

Conversely, let $u\in\dot M^s_{p,\,\fz}(\cx)$ and $\vec g\equiv\{g_k\}_{k\in\zz}\in\bd^s(u)$.
Taking $g\equiv\sup_{k\in\zz} g_k$, we have that $\|\vec g\|_{L^p(\cx,\,\ell^q)}=\|g\|_{L^p(\cx)}$,
which implies that $u\in\dot M^{s,\,p}(\cx)$ with
$\|u\|_{\dot M^{s}_{p,\,\fz}(\cx)}=\|u\|_{\dot M^{s,\,p}(\cx)}$. This finishes the proof of Proposition \ref{p2.1}.
\end{proof}

Now, we introduce several useful variants of $\bd^{s}(u)$ to
characterize $\dot M^s_{p,\,q}(\cx)$ and $\dot N^s_{p,\,q}(\cx)$.
To this end, let $s\in(0,\,\fz)$ and $u$ be a measurable function on $\cx$.

For $N_1,\,N_2\in\zz_+$, denote by $\bd^{s,\,N_1,\,N_2}(u)$
the {\it collection of all the sequences of nonnegative measurable functions},
${\vec g}\equiv\{g_k\}_{k\in\zz}$, satisfying that there exists
 $E\subset\cx$ with $\mu(E)=0$ such that
 for all  $k\in\zz$ and $x,\, y\in\cx\setminus E$ with $2^{-k-1-N_1}\le d(x,y)< 2^{-k+N_2}$,
 \begin{equation*}
|u(x)-u(y)|\le [d(x,\,y)]^s[g_k(x)+g_k(y)].
\end{equation*}

For $\ez\in(0,\,s]$ and  $N\in\nn$,
denote by $\wz\bd^{s,\,\ez,\,N}(u)$ the {\it collection of all the sequences of nonnegative measurable functions},
${\vec g}\equiv\{g_k\}_{k\in\zz}$, satisfying that there exists
 $E\subset\cx$ with $\mu(E)=0$ such that
 for all $x,\, y\in\cx\setminus E$,
\begin{equation*}
|u(x)-u(y)|\le [d(x,\,y)]^{s-\ez} \sum_{k\in\zz}2^{-k\ez}[g_k(x)+g_k(y)]\chi_{[2^{-k-1-N},\,\fz)}(d(x,\,y)).
\end{equation*}

For $\ez\in(0,\,\fz)$ and $N\in\zz$, denote by $\overline \bd^{s,\,\ez,\,N}(u)$
the {\it collection of all the sequences of nonnegative measurable functions},
${\vec g}\equiv\{g_k\}_{k\in\zz}$, satisfying that there exists
 $E\subset\cx$ with $\mu(E)=0$ such that
 for all  $x,\, y\in\cx\setminus E$,
\begin{equation*}
|u(x)-u(y)|\le  [d(x,\,y)]^{s+\ez}\sum_{k\in\zz}2^{k\ez}[g_k(x)+g_k(y)]\chi_{(0,\,2^{-k-N})}(d(x,\,y)).
\end{equation*}

Then we have the following equivalent characterizations of $\dot M^s_{p,\,q}(\cx)$.

\begin{thm}\label{t2.1}

\noindent (I) Let  $s,\,p\in(0,\,\fz)$ and $q\in(0,\,\fz]$.
Then the following are equivalent:

(i) $u\in  \dot {M}^s_{p,\,q}(\cx) $;

(ii) for every pair of $N_1,\,N_2\in\zz_+$, $\inf_{\vec g\in \bd^{s,\,N_1,\,N_2} (u)}\lf\| \vec g  \r\|_{L^p(\cx,\,\ell^q) }<\fz$;

(iii) for every pair of $\ez_1\in(0,\,s]$ and $N_3\in\zz$, $\inf_{\vec g\in \wz\bd^{s,\,\ez_1,\,N_3} (u)}
\lf\| \vec g  \r\|_{L^p(\cx,\,\ell^q) }<\fz$;

(iv)  for every pair of $\ez_2\in(0,\,\fz)$ and $N_4\in\zz$, $\inf_{\vec g\in \overline \bd^{s,\,\ez_2,\,N_4} (u)}
\lf\| \vec g  \r\|_{L^p(\cx,\,\ell^q) }<\fz$.

\noindent Moreover, given $\ez_1, \,\ez_2 ,N_1,\,N_2,\,N_3$ and $N_4$ as above, for all $u\in  \dot {M}^s_{p,\,q}(\cx)$,
\begin{eqnarray*}
\|u\|_{\dot {M}^s_{p,\,q}(\cx)}&&\sim \inf_{\vec g\in \bd^{s,\,N_1,\,N_2} (u)}
\lf\| \vec g  \r\|_{L^p(\cx,\,\ell^q) }\\
&&\sim\inf_{\vec g\in \wz\bd^{s,\,\ez_1,\,N_3} (u)}
\lf\| \vec g  \r\|_{L^p(\cx,\,\ell^q) }\sim
\inf_{\vec g\in \overline \bd^{s,\,\ez_2,\,N_4} (u)}
\lf\| \vec g  \r\|_{L^p(\cx,\,\ell^q) },
\end{eqnarray*}
where the implicit constants are independent of $u$.

(II) Let $s\in(0,\,\fz)$ and $p,\,q\in(0,\,\fz]$.
Then the above statements still hold with $\dot {M}^s_{p,\,q}(\cx)$ and
$L^p(\cx,\,\ell^q)$ replaced by $\dot {N}^s_{p,\,q}(\cx)$ and
$\ell^q(L^p(\cx))$, respectively.
\end{thm}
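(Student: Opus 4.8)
The plan is to prove Theorem \ref{t2.1} by establishing a chain of inequalities between the four infima, exploiting the fact that all four gradient classes are "comparable up to a bounded shift of indices." I would focus on part (I) for $\dot M^s_{p,\,q}(\cx)$; part (II) is completely parallel, replacing $L^p(\cx,\,\ell^q)$ by $\ell^q(L^p(\cx))$ throughout and using that both norms behave the same way under the index shifts below (they only see finitely many neighbouring indices at a time, and $\ell^q$ is quasi-normed with the shift being a bounded operation when $q\ge 1$; for $q<1$ one uses the $q$-triangle inequality). The trivial direction is that $\bd^s(u)=\bd^{s,\,0,\,0}(u)\subset\bd^{s,\,N_1,\,N_2}(u)$ for all $N_1,N_2$, so $\mathrm{(i)}\Rightarrow\mathrm{(ii)}$ is immediate with the infimum only decreasing; similarly taking $\ez\in(0,s]$ and restricting the sum in the definitions of $\wz\bd$ and $\overline\bd$ to the single term $k$ with $2^{-k-1}\le d(x,y)<2^{-k}$ shows $\bd^s(u)$-gradients produce $\wz\bd^{s,\ez,N}$- and $\overline\bd^{s,\ez,N}$-gradients, giving $\mathrm{(i)}\Rightarrow\mathrm{(iii)}$ and $\mathrm{(i)}\Rightarrow\mathrm{(iv)}$.

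The substantive work is the reverse implications. First I would handle $\mathrm{(ii)}\Rightarrow\mathrm{(i)}$: given $\vec g=\{g_k\}\in\bd^{s,\,N_1,\,N_2}(u)$, define $h_k\equiv\sum_{|j-k|\le N_1+N_2+1}g_j$ (a finite sum). For $x,y$ with $2^{-k-1}\le d(x,y)<2^{-k}$, there is an index $\ell$ with $|\ell-k|\le N_1+N_2+1$ such that $2^{-\ell-1-N_1}\le d(x,y)<2^{-\ell+N_2}$, hence $|u(x)-u(y)|\le [d(x,y)]^s[g_\ell(x)+g_\ell(y)]\le[d(x,y)]^s[h_k(x)+h_k(y)]$, so $\{h_k\}\in\bd^s(u)$; and $\|\{h_k\}\|_{L^p(\cx,\ell^q)}\ls\|\{g_k\}\|_{L^p(\cx,\ell^q)}$ because shifting by a bounded number of indices and summing boundedly many shifted copies changes the $\ell^q$ (hence $L^p(\cx,\ell^q)$) norm only by a constant depending on $N_1,N_2,q$. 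Next, $\mathrm{(iii)}\Rightarrow\mathrm{(i)}$: given $\vec g\in\wz\bd^{s,\ez,N}(u)$, one has for $2^{-k-1}\le d(x,y)<2^{-k}$ that $|u(x)-u(y)|\le [d(x,y)]^{s-\ez}\sum_{j\le k+N}2^{-j\ez}[g_j(x)+g_j(y)]$; pulling out $[d(x,y)]^{-\ez}\sim 2^{k\ez}$ one gets $|u(x)-u(y)|\le [d(x,y)]^s[G_k(x)+G_k(y)]$ with $G_k\equiv C\sum_{j\le k+N}2^{(k-j)\ez}g_j$, a geometric-type average. The point is that $\|\{G_k\}\|_{\ell^q}\ls\|\{g_j\}\|_{\ell^q}$: the kernel $2^{(k-j)\ez}\chi_{\{j\le k+N\}}$ has $\ell^1$-norm in $j$ (and in $k$) bounded uniformly, so Young's inequality for convolution on $\zz$ gives the $\ell^q$ bound for $q\ge 1$, and the obvious monotone/subadditive estimate handles $q<1$; then take $L^p$ norms. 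The implication $\mathrm{(iv)}\Rightarrow\mathrm{(i)}$ is the mirror image with $\sum_{j\ge k-N}2^{(j-k)\ez}g_j$ in place of $G_k$; here the summability $\ez>0$ on the "high-frequency" side is exactly what makes the convolution kernel $\ell^1$.

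The main obstacle I anticipate is bookkeeping the $q<1$ case cleanly and uniformly across all three reverse implications: one cannot use the ordinary triangle inequality for $\ell^q$ or the Minkowski/Young inequality directly, so I would systematically invoke the quasi-norm estimate $\|\{\sum_i a_{i,k}\}_k\|_{\ell^q}^q\le\sum_i\|\{a_{i,k}\}_k\|_{\ell^q}^q$ for $q\le 1$ together with geometric decay to sum the series, and for the shift by finitely many indices simply note $\|\{g_{k+\ell}\}_k\|_{\ell^q}=\|\{g_k\}_k\|_{\ell^q}$. A secondary point to be careful about is that all four gradient classes share a common null set $E$ up to enlargement by countably many null sets, so the pointwise inequalities can be combined on a single full-measure set; this is routine. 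Finally, for part (II) I would remark that every estimate above was obtained by bounding the new gradient sequence pointwise (in the $\ell^q$ sense, or after a bounded shift) by a convolution of the old one against a fixed $\ell^1(\zz)$ kernel, and such operations are bounded on $\ell^q(L^p(\cx))$ by the same Young/quasi-norm argument applied with $L^p(\cx)$-norms in place of pointwise values, so the proof transfers verbatim.
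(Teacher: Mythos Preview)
Your overall strategy is the same as the paper's, but there are two genuine errors in the execution that break the argument as written.

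First, for (i)$\Leftrightarrow$(ii) you have the inclusion backwards. The class $\bd^{s,\,N_1,\,N_2}(u)$ imposes the pointwise inequality over the \emph{larger} annulus $2^{-k-1-N_1}\le d(x,y)<2^{-k+N_2}$ for each $k$; this is a \emph{stronger} requirement than the one defining $\bd^s(u)=\bd^{s,\,0,\,0}(u)$, so $\bd^{s,\,N_1,\,N_2}(u)\subset\bd^s(u)$, not the other way. Consequently (ii)$\Rightarrow$(i) is the trivial direction (the paper simply cites this inclusion), while (i)$\Rightarrow$(ii) requires a construction: given $\vec g\in\bd^s(u)$ one sets $h_k=\sum_{j=-N_2}^{N_1}g_{k+j}$ and checks $\vec h\in\bd^{s,\,N_1,\,N_2}(u)$. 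Your finite-sum construction is the right one, but you have attached it to the wrong implication.

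Second, and more seriously, in (iii)$\Rightarrow$(i) and (iv)$\Rightarrow$(i) you have the summation ranges reversed, and as a result your ``$\ell^1$ kernels'' diverge. For $\wz\bd^{s,\,\ez,\,N}$ the indicator $\chi_{[2^{-j-1-N},\,\infty)}(d(x,y))$ is nonzero precisely when $d(x,y)\ge 2^{-j-1-N}$; for $d(x,y)\in[2^{-k-1},2^{-k})$ this forces $j\ge k-N$, not $j\le k+N$. Thus the correct definition is $G_k=C\sum_{j\ge k-N}2^{(k-j)\ez}g_j$, whose kernel $2^{(k-j)\ez}\chi_{\{j\ge k-N\}}$ has $\ell^1$-norm $\sum_{i\le N}2^{i\ez}<\infty$. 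Your kernel $2^{(k-j)\ez}\chi_{\{j\le k+N\}}$ sums to $\sum_{i\ge -N}2^{i\ez}=\infty$. The same reversal occurs in (iv): for $\overline\bd^{s,\,\ez,\,N}$ the indicator $\chi_{(0,\,2^{-j-N})}(d(x,y))$ forces $j\le k-N$ (low frequencies), and the correct $G_k=\sum_{j\le k-N}2^{(j-k)\ez}g_j$ has summable kernel; your $\sum_{j\ge k-N}2^{(j-k)\ez}$ again diverges. Once these directions are corrected, your Young-inequality / $q$-subadditivity argument goes through exactly as in the paper.
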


\begin{proof}
We first prove that (i) implies (ii), (iii) and (iv).
Let $u$ be a measurable function and $\vec g\in \bd^{s} (u)$.
Then for every pair of $N_1,\,N_2\in\zz_+$,
setting $h_k\equiv \sum_{j=-N_2}^{N_1}g_{k+j}$ for $k\in\zz$,
we know that
$\vec h\equiv\{h_k\}_{k\in\zz}\in \bd^{s,\,N_1,\,N_2} (u)$.
For every pair of $\ez_1\in(0,\,s]$ and   $N_3\in\zz$,
taking $h_k\equiv 2^{N_3\ez}g_{k-N_3}$ for all $k\in\zz $,
we have
$\vec h\equiv\{h_k\}_{k\in\zz}\in \wz\bd^{s,\,\ez_1,\,N_3} (u)$.
For every pair of $\ez_2\in(0,\,\fz)$ and $N_4\in\zz$,
taking $h_k\equiv 2^{N_4\ez_2}g_{k-N_4}$ for all $k\in\zz$,
we have
$\vec h\equiv\{h_k\}_{k\in\zz}\in \overline\bd^{s,\,\ez_2,\,N_4} (u)$.
Then it is to easy to see that in all of the above cases, we have
$\| \vec h  \|_{L^p(\cx,\,\ell^q) }\ls \|u\|_{\dot {M}^s_{p,\,q}(\cx)}$
and
$\| \vec h  \|_{\ell^q(L^p(\cx))}\ls \|u\|_{\dot N^s_{p,\,q}(\cx)}$.
Thus, (i) implies (ii), (iii) and (iv).

Now we prove the converse.
Since $ \bd^{s,\,N_1,\,N_2} (u)\subset\bd^{s} (u)$, we  have that (ii) implies (i).

To show that (iii) implies (i), let $u$ be a measurable function and $\vec g\in \wz\bd^{s,\,\ez_1,\,N_3} (u)$.
For all $k\in\zz$, set $h_k\equiv \sum_{j=k-N_3}^\fz 2^{ (k-j+1)\ez_1}g_j$.
Then $\vec h\equiv\{h_k\}_{k\in\zz}\in \bd^{s} (u)$.

Moreover, if $p\in(0,\,\fz)$, by the H\"older inequality when $q\in(1,\,\fz)$
and the inequality
\begin{equation}\label{e2.2}
\lf(\sum_{i\in\zz} |a_i|\r)^q\le \sum_i |a_i|^q
\end{equation}
for $\{a_i\}_{i\in\zz}\subset\rr$
when $q\in(0,\,1]$,
we   have
\begin{eqnarray*}
\| \vec h   \|^q_{\ell^q}&&\ls
 \sum_{k\in\zz}\lf[\sum_{j=k-N_3}^\fz 2^{ (k-j+1)\ez_1}g_j\r]^q \ls \sum_{k\in\zz} \sum_{j=k-N_3}^\fz 2^{-(j-k)q\ez_1/2}[g_j]^q
\ls \| \vec g  \|^q_{\ell^q},
\end{eqnarray*}
which gives that $\| \vec h   \|_{L^p(\cx,\,\ell^q) }\ls\|u\|_{\dot M^s_{p,\,q}(\cx)}$.
This also holds when $q=\fz$, as seen  with a slight modification.

On the other hand,  by the H\"older inequality when $p\in(1,\,\fz)$ and the inequality
\eqref{e2.2} with $q=p$ when $p\in(0,\,1]$, we have
\begin{eqnarray*}
\|\vec  h   \|^q_{\ell^q(L^p(\cx))}&&\ls
 \sum_{k\in\zz}\lf(\int_\cx\lf[\sum_{j=k-N_3}^\fz 2^{-(j-k-1)\ez_1}g_j(y)\r]^p\,d\mu(y)\r)^{q/p}\\
&& \ls \sum_{k\in\zz}\lf(\sum_{j=k-N_3}^\fz 2^{-(j-k)p\ez_1/2}\int_\cx\lf[g_j(y)\r]^p\,d\mu(y)\r)^{q/p}.
\end{eqnarray*}
Applying the H\"older inequality when $p/q\in(1,\,\fz)$ and the inequality \eqref{e2.2} with power $q/p$ instead of $q$
again when $p/q\in(0,\,1]$, we further have
\begin{eqnarray*}
\|\vec  h   \|^q_{\ell^q(L^p(\cx))}
&& \ls \sum_{k\in\zz}\sum_{j=k-N_3}^\fz 2^{-(j-k)q\ez_1/4}\lf(\int_\cx\lf[g_j(y)\r]^p\,d\mu(y)\r)^{q/p}\ls \| \vec g  \|^q_{\ell^q(L^p(\cx))},
\end{eqnarray*}
 which gives that $\| \vec h   \|_{\ell^q(L^p(\cx)) }\ls\|u\|_{\dot N^s_{p,\,q}(\cx)}$.
This also holds when $p=\fz$ or $q=\fz$, as easily seen.

To prove that (iv) implies (i), let $u$ be a measurable function and $\vec g\in \overline\bd^{s,\,\ez_2,\,N_4} (u)$.
For every $k\in\zz$, set $h_k\equiv \sum_{j=-\fz}^{ k-N_4}2^{(j-k)\ez_2}g_j$.
Then $\vec h\equiv\{h_k\}_{k\in\zz}\in \bd^{s} (u)$.

Moreover, if $p\in(0,\,\fz)$, by the H\"older inequality when $q\in(1,\,\fz)$ and
the inequality \eqref{e2.2}
when $q\in(0,\,1]$,
we  have
\begin{eqnarray*}
\| \vec h \|^q_{\ell^q}&&\ls
 \sum_{k\in\zz}\lf[\sum_{j=-\fz}^{k-N_4}  2^{(j -k)\ez_2}g_j\r]^q \ls \sum_k \sum_{j=-\fz}^{k-N_4} 2^{(j-k)q\ez_2/2}[g_j]^q
\ls \| \vec g  \|^q_{\ell^q},
\end{eqnarray*}
which gives that $\|\vec h\|_{L^p(\cx,\,\ell^q) }\ls\|u\|_{\dot M^s_{p,\,q}(\cx)}$.
This also extends to the case $q=\fz$.

Similarly, one can prove that $\| \vec h   \|_{\ell^q(L^p(\cx)) }\ls\|u\|_{\dot N^s_{p,\,q}(\cx)}$,
but we omit the details. 
This finishes the proof of Theorem \ref{t2.1}.
\end{proof}

\begin{thm}\label{t2.2}
Let  $s\in(0,\,1]$ and $q\in(0,\,\fz)$.
Then the following are equivalent:

(i) $u\in  \dot {M}^s_{\fz,\,q}(\cx) $;

(ii) for every $N_2\in\zz_+$,
\begin{equation}\label{e2.3}
\inf_{\vec g\in \bd^{s,\,0,\,N_2} (u)}\sup_{k\in\zz}\sup_{x\in\cx}
\lf(\sum_{j\ge k}\bint_{B(x,\,2^{-k})} [g_j(y)]^q\,d\mu(y)\r)^{1/q}
<\fz;
\end{equation}

(iii) for every pair of $\ez\in(0,\,s]$ and   $N_3\in\zz\setminus\nn$,
\begin{equation}\label{e2.4}
\inf_{\vec g\in \wz\bd^{s,\,\ez,\,N_3} (u)}\sup_{k\in\zz}\sup_{x\in\cx}
\lf(\sum_{j\ge k}\bint_{B(x,\,2^{-k})} [g_j(y)]^q\,d\mu(y)\r)^{1/q}
<\fz.
\end{equation}
\noindent Moreover, given $\ez$, $ N_2 $ and $N_3$ as above,
  for all $u\in  \dot {M}^s_{\fz,\,q}(\cx)$,
$\|u\|_{\dot {M}^s_{\fz,\,q}(\cx)}$ is equivalent to the given quantity.
\end{thm}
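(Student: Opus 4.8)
The plan is to imitate the proof of Theorem~\ref{t2.1}, with the role of $\|\cdot\|_{L^p(\cx,\,\ell^q)}$ now played by the Carleson-type functional
$$
\|\vec g\|_{\ast}\equiv\sup_{k\in\zz}\sup_{x\in\cx}\lf(\sum_{j\ge k}\bint_{B(x,\,2^{-k})}[g_j(y)]^q\,d\mu(y)\r)^{1/q},
$$
so that $\|u\|_{\dot M^s_{\fz,\,q}(\cx)}=\inf_{\vec g\in\bd^s(u)}\|\vec g\|_{\ast}$ and the quantities in \eqref{e2.3} and \eqref{e2.4} are, respectively, $\inf_{\vec g\in\bd^{s,\,0,\,N_2}(u)}\|\vec g\|_{\ast}$ and $\inf_{\vec g\in\wz\bd^{s,\,\ez,\,N_3}(u)}\|\vec g\|_{\ast}$. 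I would establish the four implications (ii)$\Rightarrow$(i), (i)$\Rightarrow$(ii), (i)$\Rightarrow$(iii) and (iii)$\Rightarrow$(i), keeping track of constants so as to read off the claimed equivalence of norms; throughout, $q<\fz$ is what makes the $\ell^q$-manipulations available. The implication (ii)$\Rightarrow$(i) is immediate from $\bd^{s,\,0,\,N_2}(u)\subset\bd^s(u)$, which gives $\|u\|_{\dot M^s_{\fz,\,q}(\cx)}\le\inf_{\vec g\in\bd^{s,\,0,\,N_2}(u)}\|\vec g\|_{\ast}$.

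For (i)$\Rightarrow$(ii) and (i)$\Rightarrow$(iii) I would start from $\vec g\in\bd^s(u)$ and produce an admissible sequence $\vec h$ for the other class by the very index manipulations already used in the proof of Theorem~\ref{t2.1}: $h_k\equiv\sum_{j=k-N_2}^{k}g_j$ places $\vec h$ in $\bd^{s,\,0,\,N_2}(u)$, and $h_k\equiv 2^{-N_3\ez}g_{k+N_3}$ places $\vec h$ in $\wz\bd^{s,\,\ez,\,N_3}(u)$ (here $N_3\in\zz\setminus\nn$ forces the shift $k+N_3\le k$ and makes the cut-off $\chi_{[2^{-k-1-N_3},\,\fz)}$ active at exactly the scale where \eqref{e1.1} is invoked). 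The substance is then the bound $\|\vec h\|_{\ast}\ls\|\vec g\|_{\ast}$: expanding $[h_k]^q$ by the H\"older inequality when $q>1$ and by \eqref{e2.2} when $q\le 1$, and interchanging the two sums, $\sum_{j\ge k}\bint_{B(x,\,2^{-k})}[h_j]^q$ is controlled by $\sum_{i\ge k-N}\bint_{B(x,\,2^{-k})}[g_i]^q$ with $N\in\{N_2,\,-N_3\}$. The tail $i\ge k$ is at most $\|\vec g\|_{\ast}^q$ by definition; the finitely many remaining terms with $k-N\le i<k$ are reabsorbed through $B(x,\,2^{-k})\subset B(x,\,2^{-i})$ and the doubling property of $\mu$, which yields $\bint_{B(x,\,2^{-k})}[g_i]^q\ls\bint_{B(x,\,2^{-i})}[g_i]^q\le\|\vec g\|_{\ast}^q$ with a constant depending only on $N$ and the doubling constant.

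For (iii)$\Rightarrow$(i), given $\vec g\in\wz\bd^{s,\,\ez,\,N_3}(u)$ I would set $h_\ell\equiv\sum_{k\ge\ell-N_3}2^{(\ell-k)\ez}g_k$. Using $s-\ez\ge 0$ and $2^{-\ell-1}\le d(x,\,y)<2^{-\ell}$, the verification $\vec h\in\bd^s(u)$ (up to an absolute multiplicative constant) is the same computation as in the corresponding step of Theorem~\ref{t2.1}. Since $N_3\le 0$, every index $k$ occurring in $h_\ell$ satisfies $k\ge\ell$, so all weights $2^{(\ell-k)\ez}$ are $\le 1$ and geometrically summable; a further use of the H\"older inequality (or \eqref{e2.2}) and an interchange of sums then give, for all $m\in\zz$ and $x\in\cx$,
$$
\sum_{\ell\ge m}\bint_{B(x,\,2^{-m})}[h_\ell(y)]^q\,d\mu(y)\ls\sum_{k\ge m}\bint_{B(x,\,2^{-m})}[g_k(y)]^q\,d\mu(y),
$$
whose right-hand side, after the supremum over $m$ and $x$, is exactly the quantity in \eqref{e2.4}; hence $\|\vec h\|_{\ast}\ls\inf_{\vec g\in\wz\bd^{s,\,\ez,\,N_3}(u)}\|\vec g\|_{\ast}$ and $u\in\dot M^s_{\fz,\,q}(\cx)$. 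Note that, because the lower index on the right is again $m$, this implication uses no doubling. Combining the four implications with the bookkeeping of constants yields $\|u\|_{\dot M^s_{\fz,\,q}(\cx)}\sim\inf_{\vec g\in\bd^{s,\,0,\,N_2}(u)}\|\vec g\|_{\ast}\sim\inf_{\vec g\in\wz\bd^{s,\,\ez,\,N_3}(u)}\|\vec g\|_{\ast}$ with implicit constants independent of $u$.

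The step I expect to be the main obstacle is the estimate $\|\vec h\|_{\ast}\ls\|\vec g\|_{\ast}$ in (i)$\Rightarrow$(ii) and (i)$\Rightarrow$(iii): unlike $\|\cdot\|_{L^p(\cx,\,\ell^q)}$, the functional $\|\cdot\|_{\ast}$ is not monotone under enlarging a ball, so the finitely many ``boundary'' terms $g_i$ with $i<k$ that the index shift introduces cannot simply be dropped and must instead be absorbed via the doubling property at the cost of a dimensional constant. Everything else is a line-by-line transcription of the argument for Theorem~\ref{t2.1}.
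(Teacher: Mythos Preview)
Your approach mirrors the paper's, and the implications (ii)$\Rightarrow$(i) and (iii)$\Rightarrow$(i) are handled identically. The substantive difference lies in (i)$\Rightarrow$(iii): the paper takes $h_k\equiv 2^{N_3\ez}g_{k-N_3}$, and since $N_3\le 0$ this shifts to the index $k-N_3\ge k$, so that $\sum_{j\ge k}[h_j]^q=2^{N_3\ez q}\sum_{\ell\ge k-N_3}[g_\ell]^q\le\sum_{\ell\ge k}[g_\ell]^q$ with no boundary terms and no doubling --- which is precisely the reason for restricting to $N_3\le 0$ in Theorem~\ref{t2.2} (cf.\ Remark~\ref{r2.1}). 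Your shift $h_k\equiv 2^{-N_3\ez}g_{k+N_3}$ goes to $k+N_3\le k$ and produces the boundary terms that you then absorb via doubling, but doubling is \emph{not} a hypothesis of the theorem.

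That said, your shift is the one that actually lands in $\wz\bd^{s,\,\ez,\,N_3}(u)$: with the paper's formula and $N_3<0$, the index carrying $g_m$ is $k=m+N_3$, which lies strictly below the range $k\ge m-N_3$ of the defining sum, so the paper's membership claim appears to fail for $N_3<0$. For (i)$\Rightarrow$(ii) with $N_2>0$ your concern is likewise well-founded: any admissible $h_k$ must dominate $g_{k-N_2}$, so boundary terms are unavoidable, and the paper's displayed estimate (with the same lower index $k$ on both sides) glosses over them. In short, your proof and the paper's coincide when $\mu$ is doubling --- which covers every application made in the paper --- but neither argument is complete as written for a general metric measure space when $N_2>0$ or $N_3<0$.
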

\begin{proof}
We first prove that (i) implies (ii) and (iii).
Let $u$ be a measurable function and $\vec g\in \bd^{s} (u)$.
Then for every  $N_2\in\zz_+$,
setting $h_k\equiv \sum_{j=-N_2}^{0}g_{k+j}$ for all $k\in\zz$,
we know that
$\vec h\equiv\{h_k\}_{k\in\zz}\in \bd^{s,\,0,\,N_2} (u)$.
For every pair of $\ez\in(0,\,s]$ and $N_3\in\zz\setminus\nn$,
taking $h_k\equiv 2^{N_3\ez}g_{k-N_3}$ for all $k\in\zz $,
we have
$\vec h\equiv\{h_k\}_{k\in\zz}\in \wz\bd^{s,\,\ez,\,N_3} (u)$.
Then it is to easy to see that in both cases,
\begin{equation*}
 \sum_{j\ge k}\bint_{B(x,\,2^{-k})} [h_j(y)]^q\,d\mu(y)
\ls \sum_{j\ge k}\bint_{B(x,\,2^{-k})} [g_j(y)]^q\,d\mu(y).
\end{equation*}
Thus, (i) implies (ii)  and (iii).

Conversely,  since
$\bd^{s,\,0,\,N_2} (u)\subset \bd^{s} (u)$, we have that (ii) implies (i).

To prove that (iii)  implies  (i), let $u$ be a measurable function and $\vec g\in \wz\bd^{s,\,\ez,\,N_3} (u)$.
For all $k\in\zz$, set $h_k\equiv \sum_{j=k-N_3}^\fz 2^{(k-j+1)\ez}g_j$.
Then $\vec h\equiv\{h_k\}_{k\in\zz}\in \bd^{s} (u)$.
For all $x\in\cx$ and  $k\in\zz$, by the H\"older inequality when $q\in(1,\,\fz)$ and
the inequality \eqref{e2.2} when $q\in(0,\,1]$,
we have
\begin{eqnarray*}
\sum_{j\ge k} [h_j ]^q &&\sim
\sum_{j\ge k} \lf [\sum_{i=j-N_3}^\fz 2^{-(i-j)\ez}g_i\r]^q
\ls
\sum_{j\ge k} \sum_{i=j-N_3}^\fz 2^{-(i-j)q\ez/2}\lf [g_i\r]^q
\ls
\sum_{i\ge k-N_3}\lf [g_i\r]^q,
\end{eqnarray*}
which together with $N_3\le0$ implies that
\begin{equation*}
\sum_{j\ge k}\bint_{B(x,\,2^{-k})} [h_j(y)]^q\,d\mu(y)
\ls \sum_{i\ge k}\bint_{B(x,\,2^{-k})} \lf [g_i(y)\r]^q\,d\mu(y).
\end{equation*}
Thus, (iii)  implies  (i).  This finishes the proof of Theorem \ref{t2.2}.
\end{proof}

\begin{rem}\label{r2.1}\rm
Comparing to Theorem \ref{t2.1}, notice that we require $N_1=0$ and  $N_3\le 0$ in Theorem \ref{t2.2}.
However, if $\cx$ has the doubling property, then
Theorem \ref{t2.2} still holds for all $N_1,\,N_2\in\zz_+$ and $N_3\in\zz$.
We omit the details.
\end{rem}

Finally, let $(\cx,\,d,\,\mu)$ be $\rn$ endowed with the Lebesgue measure and 
the Euclidean distance. 
The following Poincar\'e-type inequalities for  $\dot M^{s}_{p,\,q}(\rn)$ 
  play an important role in the following.

\begin{lem}\label{l2.1}
Let $s\in(0,\,1]$,  $p\in(1,\,\fz]$ and $q\in(0,\,\fz]$.
Then there exists a positive constant
$C$ such that for all $x\in\rn$, $k\in\zz$,
$u\in \dot M^{s}_{p,\,q}(B(x,\,2^{-k+2}))$
and $\vec g\in \bd^s(u)$,
\begin{eqnarray*}
   \inf_{c\in\rr}\dbint_{B(x,\, 2^{-k})}\lf|u(y)-c\r|\,dy
 \le C  2^{-ks} \sum_{j= k-3}^k\dbint_{B(x,\, 2^{-k+2})}
    g_j(y) \,dy.
\end{eqnarray*}
\end{lem}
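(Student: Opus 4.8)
The plan is to bypass the fact that a fractional Haj\l asz gradient only controls differences at individual dyadic scales, by routing the comparison of two points of $B(x,2^{-k})$ through an annular region around $x$ on which all the relevant distances are comparable to $2^{-k}$. Concretely, set $B_0\equiv B(x,2^{-k})$ and $W\equiv B(x,2^{-k+2})\setminus B(x,2^{-k+1})$; note that in $\rn$ one has $|W|\sim|B_0|\sim|B(x,2^{-k+2})|$, and that $B_0$ and $W$ are both contained in $B(x,2^{-k+2})$. I would choose the competitor constant $c\equiv\dbint_W u(w)\,dw$, which is finite (if the right-hand side is infinite there is nothing to prove, and if it is finite then each $g_j$ with $j\in\{k-3,\dots,k-1\}$ lies in $L^1(B(x,2^{-k+2}))$, which together with the pointwise estimate below forces $u\in L^1_{\mathrm{loc}}$ near $x$; alternatively this is immediate from $p>1$). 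Then
\begin{equation*}
\dbint_{B_0}|u(y)-c|\,dy\le\dbint_{B_0}\dbint_W|u(y)-u(w)|\,dw\,dy,
\end{equation*}
so it suffices to estimate $|u(y)-u(w)|$ for a.e.\ $y\in B_0$ and a.e.\ $w\in W$.

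The key geometric observation is that for $y\in B_0$ and $w\in W$ one has $2^{-k}<d(y,w)<2^{-k+3}$: indeed $d(y,w)\ge d(x,w)-d(x,y)>2^{-k+1}-2^{-k}=2^{-k}$ and $d(y,w)\le d(x,w)+d(x,y)<2^{-k+2}+2^{-k}<2^{-k+3}$. Hence the unique $j\in\zz$ with $2^{-j-1}\le d(y,w)<2^{-j}$ satisfies $j\in\{k-3,k-2,k-1\}$, and $[d(y,w)]^s<2^{(3-k)s}\le 8\cdot2^{-ks}$ since $s\le1$. Applying \eqref{e1.1} for this index $j$ (legitimate since $\mu(E)=0$, so a.e.\ such pair lies in $(\cx\setminus E)^2$), and then dominating the single relevant term by the full sum of nonnegative terms, gives
\begin{equation*}
|u(y)-u(w)|\le[d(y,w)]^s\big(g_j(y)+g_j(w)\big)\le 8\cdot2^{-ks}\sum_{j=k-3}^{k-1}\big(g_j(y)+g_j(w)\big).
\end{equation*}

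Finally I would integrate this bound in $w$ over $W$ and in $y$ over $B_0$, apply Fubini, and use $g_j\ge0$ together with $|B_0|\sim|W|\sim|B(x,2^{-k+2})|$ to replace the averages over $B_0$ and $W$ by the average over $B(x,2^{-k+2})$, enlarging the summation range from $\{k-3,k-2,k-1\}$ to $\{k-3,\dots,k\}$:
\begin{equation*}
\dbint_{B_0}|u(y)-c|\,dy\le 8\cdot2^{-ks}\sum_{j=k-3}^{k-1}\lf(\dbint_{B_0}g_j(y)\,dy+\dbint_W g_j(w)\,dw\r)\ls 2^{-ks}\sum_{j=k-3}^{k}\dbint_{B(x,2^{-k+2})}g_j(w)\,dw.
\end{equation*}
Since $\inf_{c\in\rr}\dbint_{B_0}|u-c|$ is no larger than the left-hand side for this particular choice of $c$, the lemma follows. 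The only genuinely non-routine point is the first one: realizing that the very small-scale differences, which a naive telescoping chain would estimate via $g_j$ with $j$ arbitrarily large (far outside the admissible window $\{k-3,\dots,k\}$), must instead be controlled by passing through an intermediate region at the single scale $\sim2^{-k}$. Everything else is the triangle inequality, Fubini, and comparison of integral averages over balls of comparable size; note in particular that the parameters $q\in(0,\fz]$ and the precise value of $p\in(1,\fz]$ (beyond ensuring local integrability) play no role.
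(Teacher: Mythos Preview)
Your proof is correct and follows essentially the same approach as the paper: choose the competitor $c$ to be the average of $u$ over the annulus $W=B(x,2^{-k+2})\setminus B(x,2^{-k+1})$, use the distance bounds $2^{-k}<d(y,w)<2^{-k+3}$ for $y\in B(x,2^{-k})$ and $w\in W$ to restrict the relevant Haj\l asz-gradient indices to $j\in\{k-3,k-2,k-1\}$, and then average. The paper's argument is slightly terser but otherwise identical.
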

\begin{proof}
Notice that for all $x\in\rn$ and $k\in\zz$,
\begin{eqnarray*}
   \inf_{c\in\rr}\bint_{B(x,\,2^{-k})}|u(y)-c|\,dy
&&\le   \bint_{B(x,\,2^{-k})}|u(y)-u_{B(x,\,2^{-k+2})\setminus B(x,\,2^{-k+1})}|\,dy.\\
&&\le \bint_{B(x,\,2^{-k})}\bint_{B(x,\,2^{-k+2})\setminus B(x,\,2^{-k+1})}|u(y)-u(z)|\,dy\,dz.\\
\end{eqnarray*}
Since for $y\in B(x,\,2^{-k})$ and $z\in B(x,\,2^{-k+2})\setminus B(x,\,2^{-k+1})$,
we have that $2^{-k}\le |y-z|<2^{-k+3}$, which implies that
$$|u(y)-u(z)|\ls 2^{-ks}\sum_{j=k-3}^{k-1}[g_j(y)+g_j(z)].$$
Thus,
\begin{eqnarray*}
\inf_{c\in\rr}\bint_{B(x,\,2^{-k})}|u(y)-c|\,dy
&&\ls 2^{-k s}\sum_{j=k-3}^{k-1}\bint_{B(x,\,2^{-k+2})}  g_j(y) \,dy,
\end{eqnarray*}
which completes the proof of Lemma \ref{l2.1}.
\end{proof}

The following inequality was  
given by Haj\l asz \cite[Theorem 8.7]{h03} when $s=1$, and  when $s\in (0,1)$,
it can be proved by a slight modification of the proof of
\cite[Theorem 8.7]{h03}.  

\begin{lem}\label{l2.2}
Let $s\in(0,\,1]$, $p\in(0,\,n/s)$
and $p_\ast=np/(n-sp)$. Then
there exists a positive constant
$C$ such that for all $x\in\rn$, $r\in(0,\,\fz)$, $u\in \dot M^{s,\,p}(B(x,\,2r))$ and $g\in \cd^s(u)$,
\begin{eqnarray*}
   \inf_{c\in\rr}\lf(\dbint_{B(x,\, r)}
    \lf|u(y)-c\r|^{p_\ast}\,dy\r)^{1/p_\ast}
 \le C  r^s\lf(\dbint_{B(x,\, 2r)}
    [g(y)]^{p}\,dy\r)^{1/p}.
\end{eqnarray*}
\end{lem}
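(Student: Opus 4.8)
The statement to prove is Lemma \ref{l2.2}, the Sobolev--Poincar\'e inequality for Haj\l asz--Sobolev functions with a fractional gradient. Here is how I would approach it.

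\medskip

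\noindent\textbf{Plan of proof.}
The plan is to follow the classical truncation argument of Haj\l asz (as in \cite[Theorem 8.7]{h03}), adapted to the fractional exponent $s\in(0,1)$. The starting point is a weak-type estimate: for $u\in\dot M^{s,\,p}(B(x,2r))$ with $s$-gradient $g\in\cd^s(u)$, and for any ball $B=B(x,r)$, one shows a pointwise bound of the form
\[
|u(y)-u_B|\lesssim r^s\bigl(M_{2B}(g^p)(y)\bigr)^{1/p}\qquad\text{for a.e. }y\in B,
\]
where $M_{2B}$ is the restricted (uncentered) Hardy--Littlewood maximal operator over $2B=B(x,2r)$. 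This pointwise inequality is obtained in the standard way by a telescoping sum over dyadic subballs $B_i=B(x,2^{-i}r)$ shrinking to $y$: writing $u(y)-u_B=\sum_{i\ge0}(u_{B_{i+1}}-u_{B_i})$, estimating each term using \eqref{e2.1} (for $z\in B_{i+1}$, $w\in B_i$ one has $d(z,w)<2\cdot 2^{-i}r$, so $|u(z)-u(w)|\le (2^{-i}r)^s\cdot C[g(z)+g(w)]$ up to a harmless constant from the factor $2^s\le 2$), and then averaging to get $|u_{B_{i+1}}-u_{B_i}|\lesssim (2^{-i}r)^s\,\bint_{B_i}g$. Since $p<n/s$ we have $sp<n$, so the geometric-type series $\sum_i (2^{-i}r)^s (\bint_{B_i} g^p)^{1/p}$ converges and is controlled by $r^s (M_{2B}g^p(y))^{1/p}$ --- this is exactly the place where $s<n/p$ (equivalently $p<n/s$) is used, so that the exponent $s$ beats the dimensional growth in the maximal function sum; without it the telescoping series would not close.

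\medskip

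\noindent\textbf{Passing to the strong $L^{p_\ast}$ estimate.}
Once the pointwise inequality is in hand, the conclusion is obtained by the Maz'ya--Haj\l asz truncation method rather than by a naive application of the maximal theorem (which would only give $p_\ast$ on the right-hand side as well). One applies the pointwise bound to the truncations $u_t^s := \min\{\max\{u-c,0\},t\}$ at dyadic levels $t=2^j$, observing that each truncation again lies in $\dot M^{s,\,p}$ with $s$-gradient $g\chi_{\{0<u-c<t\}}$ (more precisely, $g$ supported where $u_t^s$ is non-constant). Summing the resulting weak-type level-set estimates over dyadic $t$, together with a careful bookkeeping of the annuli $\{2^j<u-c\le 2^{j+1}\}$, upgrades the inequality to the desired strong form with the Sobolev exponent $p_\ast=np/(n-sp)$ on the left and $p$ on the right. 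The constant $c$ is taken to be an appropriate median (or $u_B$ with a later symmetrization to handle the truncation of $u$ from both sides, treating $(u-c)_+$ and $(u-c)_-$ separately).

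\medskip

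\noindent\textbf{Main obstacle.}
The routine but delicate part --- and where I would spend the most care --- is verifying that the truncation $u_t^s$ genuinely retains a fractional $s$-gradient of the asserted form: for the \emph{non-fractional} case $s=1$ this is the standard chain-rule-type fact that $g\chi_{\{a<u<b\}}$ is a gradient of the truncated function, but for $s\in(0,1)$ one must check directly from inequality \eqref{e2.1} that $|u_t^s(x)-u_t^s(y)|\le d(x,y)^s[g(x)\chi_E(x)+g(y)\chi_E(y)]$ on the relevant set $E$; this uses only that $t\mapsto\min\{\max\{t,0\},\lambda\}$ is $1$-Lipschitz and monotone, so it goes through verbatim, but it must be stated. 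The rest --- convergence of the telescoping series under $sp<n$, and the dyadic summation in the truncation argument --- is a direct transcription of \cite[Theorem 8.7]{h03} with $r$ replaced by $r^s$ and $2^{-i}$ by $2^{-is}$, and the constant $C$ depends only on $n$, $s$, $p$.
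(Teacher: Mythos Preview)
Your proposal is correct and matches the paper's approach exactly: the paper does not give a self-contained proof of Lemma \ref{l2.2} but simply states that the case $s=1$ is \cite[Theorem 8.7]{h03} and that the case $s\in(0,1)$ follows by a slight modification of that proof, which is precisely the truncation argument you outline. Two minor slips worth fixing before writing it out: the telescoping balls should be centered at $y$ (not at $x$) so that they shrink to $y$, and the convergence of $\sum_i (2^{-i}r)^s(\bint_{B_i}g^p)^{1/p}$ needs only $s>0$ --- the hypothesis $sp<n$ enters later, in the truncation step, to make $p_\ast$ finite and to close the dyadic level-set summation.
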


\begin{lem}\label{l2.3}
Let $s\in(0,\,1]$ and $p\in(0,\,1]$.
Then for every pair of $\ez,\ez'\in(0,\,s)$ with $\ez<\ez'$,
there exists a positive constant
$C$ such that for all  $x\in\rn$, $k\in\zz$,
measurable functions $u$ and $\vec g\in \bd^s(u)$,
\begin{eqnarray}\label{e2.5}
&&\inf_{c\in\rr}\lf(\dbint_{B(x,\,2^{-k})}
    \lf|u(y)-c\r|^{np/(n-\ez p)}\,dy\r)^{(n-\ez p)/(np)}\\
 &&\quad\le C  2^{-k\ez'}\sum_{j\ge k-2}2^{-j(s-\ez')}\lf\{\dbint_{B(x,\,2^{-k+1})}
    [g_j(y)]^p\,dy\r\}^{1/p}.\nonumber
\end{eqnarray}
\end{lem}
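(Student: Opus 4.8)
The plan is to reduce the claimed fractional Sobolev–Poincaré inequality on the ball $B(x,2^{-k})$ to a dyadic decomposition of $u$ together with the subcritical Sobolev–Poincaré inequality of Lemma \ref{l2.2}. First I would fix $x\in\rn$ and $k\in\zz$ and abbreviate $B_i\equiv B(x,2^{-k+i})$. Given $\vec g=\{g_j\}_{j\in\zz}\in\bd^s(u)$, the natural idea is to build, for each scale $j\ge k-2$, a single $\ez$-gradient of $u$ adapted to $B_1$ out of the pieces $g_j$, and to control the left-hand quasinorm by summing the contributions of these scales. Concretely, one uses that on $B_1$ the pair condition \eqref{e1.1} for indices $j\ge k-2$ controls all increments $|u(y)-u(z)|$ with $y,z\in B_1$: writing $d(y,z)\in[2^{-j-1},2^{-j})$ for some $j\ge k-2$ and interpolating the factor $[d(y,z)]^s=[d(y,z)]^{s-\ez}[d(y,z)]^{\ez}\le 2^{-j\ez}[d(y,z)]^{s-\ez}$, one sees that $G\equiv\sum_{j\ge k-2}2^{-j(s-\ez)}g_j$ (restricted suitably) serves as an $\ez$-gradient of $u$ on $B_1$, since $s-\ez\in(0,1]$. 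Actually, to get the sharp right-hand side with exponent $p$ rather than $1$ and with the weight $2^{-j(s-\ez')}$, I would not collapse the scales prematurely; instead I would apply Lemma \ref{l2.2} with the exponent $\ez$ in place of $s$ (note $\ez\in(0,s)\subset(0,1]$ and here we are in the regime $p\in(0,1]$, so $p<n/\ez$ and $p_\ast=np/(n-\ez p)$), treating $G$ as the $\ez$-gradient, to obtain
\begin{equation*}
\inf_{c\in\rr}\lf(\dbint_{B_0}|u(y)-c|^{np/(n-\ez p)}\,dy\r)^{(n-\ez p)/(np)}\ls 2^{-k\ez}\lf(\dbint_{B_1}[G(y)]^p\,dy\r)^{1/p}.
\end{equation*}

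The second step is to estimate $\|G\|_{L^p(B_1)}$. Since $p\in(0,1]$, the elementary inequality \eqref{e2.2} gives $[G(y)]^p\le\sum_{j\ge k-2}2^{-jp(s-\ez)}[g_j(y)]^p$, hence
\begin{equation*}
\lf(\dbint_{B_1}[G(y)]^p\,dy\r)^{1/p}\le\lf(\sum_{j\ge k-2}2^{-jp(s-\ez)}\dbint_{B_1}[g_j(y)]^p\,dy\r)^{1/p}\le\sum_{j\ge k-2}2^{-j(s-\ez)}\lf(\dbint_{B_1}[g_j(y)]^p\,dy\r)^{1/p},
\end{equation*}
again by \eqref{e2.2}. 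Multiplying by the prefactor $2^{-k\ez}$ from the first step and writing $2^{-j(s-\ez)}=2^{-j(\ez'-\ez)}2^{-j(s-\ez')}$ together with $2^{-k\ez}2^{-j(\ez'-\ez)}\le 2^{-k\ez'}$ for $j\ge k-2$ (here we crucially use $\ez<\ez'$ so that the exponent $\ez'-\ez$ is positive and the sum over $j\ge k-2$ of $2^{-(j-k)(\ez'-\ez)}$ is a finite geometric series absorbed into the constant), one arrives at the bound
\begin{equation*}
2^{-k\ez}\lf(\dbint_{B_1}[G(y)]^p\,dy\r)^{1/p}\ls 2^{-k\ez'}\sum_{j\ge k-2}2^{-j(s-\ez')}\lf(\dbint_{B_1}[g_j(y)]^p\,dy\r)^{1/p},
\end{equation*}
which is exactly the right-hand side of \eqref{e2.5}.

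There is one technical wrinkle I would handle carefully. Lemma \ref{l2.2} is stated for functions in $\dot M^{\ez,p}(B(x,2r))$ with a genuine $\ez$-gradient $g\in\cd^\ez(u)$, i.e. the pair inequality \eqref{e2.1} must hold for \emph{all} $y,z$ in the ball, not only for pairs at a restricted range of distances. Here the function $G$ built above controls $|u(y)-u(z)|$ for all $y,z\in B_1$ because any such pair has $d(y,z)<2^{-k+2}\le 2^{-(k-2)}$, so the relevant index $j$ with $2^{-j-1}\le d(y,z)<2^{-j}$ automatically satisfies $j\ge k-2$ and is therefore included in the sum defining $G$; thus $2G$ (or $G$, absorbing the factor $2$ into $C$) is a legitimate $\ez$-gradient of $u\big|_{B_1}$, and Lemma \ref{l2.2} applies on the ball $B_0=B(x,2^{-k})$ with $2r=2^{-k+1}$. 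The main obstacle, then, is not any single hard estimate but getting the bookkeeping of exponents right: one must verify that the interpolation $[d(y,z)]^s\le 2^{-j\ez}[d(y,z)]^{s-\ez}$ is valid on the dyadic annulus, that $s-\ez>0$ keeps $G$ an honest (Hölder-type, exponent $\le 1$) gradient for Lemma \ref{l2.2}, and that the strict inequality $\ez<\ez'$ is what makes the geometric series in $j$ converge after pulling out $2^{-k\ez'}$. Combining the three displayed estimates (and noting the bound is uniform in $c$, so the infimum passes through) completes the proof.
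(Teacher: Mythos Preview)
Your overall strategy coincides with the paper's: build an $\ez$-gradient of $u$ on $B_1=B(x,2^{-k+1})$ from the tail $\{g_j\}_{j\ge k-2}$, apply Lemma~\ref{l2.2} with $s$ replaced by $\ez$, and then estimate the $L^p$-norm of that gradient. The paper actually takes the $\ell^p$-combination
\[
g\equiv\Big(\sum_{j\ge k-2}2^{-j(s-\ez)p}g_j^p\Big)^{1/p},
\]
while you take the $\ell^1$-combination $G=\sum_{j\ge k-2}2^{-j(s-\ez)}g_j$; both are legitimate $\ez$-gradients on $B_1$, and after your first (correct) use of \eqref{e2.2} the two routes meet at the same intermediate quantity
\[
\Big(\sum_{j\ge k-2}2^{-j(s-\ez)p}\dbint_{B_1}[g_j]^p\Big)^{1/p}.
\]

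The gap is your second appeal to \eqref{e2.2}. You claim
\[
\Big(\sum_{j}2^{-j(s-\ez)p}\dbint_{B_1}[g_j]^p\Big)^{1/p}\le \sum_{j}2^{-j(s-\ez)}\Big(\dbint_{B_1}[g_j]^p\Big)^{1/p},
\]
i.e.\ $\|c\|_{\ell^p}\le\|c\|_{\ell^1}$ for $p\le 1$. This is the \emph{reverse} of \eqref{e2.2}: for $p\in(0,1)$ one always has $\|c\|_{\ell^1}\le\|c\|_{\ell^p}$, so the displayed inequality is false in general (take two equal terms). Your subsequent ``geometric series'' remark cannot repair this, since the error is in the direction of the $\ell^p$--$\ell^1$ comparison, not in the weights.

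The fix---and this is precisely what the paper does---is to use H\"older's inequality with exponents $1/p$ and $1/(1-p)$ on the $\ell^p$-sum: writing $2^{-j(s-\ez)p}=2^{-j(\ez'-\ez)p}\cdot 2^{-j(s-\ez')p}$, one gets
\[
\Big(\sum_{j\ge k-2}2^{-j(s-\ez)p}a_j^p\Big)^{1/p}
\le\Big(\sum_{j\ge k-2}2^{-j(\ez'-\ez)p/(1-p)}\Big)^{(1-p)/p}\sum_{j\ge k-2}2^{-j(s-\ez')}a_j,
\]
and the first factor is $\ls 2^{-k(\ez'-\ez)}$ because $\ez<\ez'$. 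Multiplying by $2^{-k\ez}$ then yields the desired $2^{-k\ez'}$. (A minor side remark: in your ``interpolation'' line you bound the wrong factor; what you need is $[d(y,z)]^{s-\ez}\le 2^{-j(s-\ez)}$, not $[d(y,z)]^{\ez}\le 2^{-j\ez}$, to make $G$ an $\ez$-gradient---but the intended conclusion is correct.)
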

\begin{proof}
For given $\ez,\,\ez'\in(0,\,s)$ with $\ez<\ez'$, and all $x\in\rn$ and $k\in\zz$,
without loss of generality, we may assume that the right-hand side of \eqref{e2.5} is finite.
For $\vec g\in\bd^s(u)$, taking $g\equiv \lf\{\sum_{j\ge k-2}2^{-j(s-\ez)p}(g_j)^p\r\}^{1/p}$,
 we have  that $g\in \cd^{\ez}(u)$ and $u\in \dot M^{\ez,\,p}(B(x,\,2^{-k+1}))$.
Indeed, for every pair of $y,\,z\in B(x,\,2^{-k+1})$,
there exists   $j\ge k-2$
such that $2^{-j-1}\le|y-z|< 2^{-j}$ and
hence
$$|u(y)-u(z)|\le |y-z|^s[g_j(y)+g_j(z)]\le |y-z|^{\ez} [g(y)+g(z)].  $$
Moreover, by \eqref{e2.2} with $q=p$, $\ez<\ez'$ and the H\"older inequality, we have
\begin{eqnarray}\label{e2.6}
&&\|g\|_{L^p(B(x,\,2^{-k+1}))}\\
&&\quad\le \lf\{\sum_{j\ge k-2}2^{-j(s-\ez)p}
\int_ {B(x,\,2^{-k+1})}[g_j(y)]^{p}\,dy\r\}^{1/p}\nonumber\\
&&\quad\ls\lf(\sum_{j\ge k-2} 2^{-j(\ez'-\ez)p/(1-p)}\r)^{(1-p)/p}\sum_{j\ge k-2}2^{-j(s-\ez')}\lf\{\int_{B(x,\,2^{-k+1})}
    [g_j(y)]^{p}\,dy\r\}^{1/p} \nonumber\\
&&\quad\ls  2^{-k(\ez'-\ez+n/p)} \sum_{j\ge k-2}2^{-j(s-\ez')}\lf\{\bint_{B(x,\,2^{-k+1})}
    [g_j(y)]^{p}\,dy\r\}^{1/p}.\nonumber
\end{eqnarray}
Thus, the above claims are true.

Then, applying Lemma \ref{l2.2}, we
obtain
\begin{eqnarray*}
&&\inf_{c\in\rr}\lf(\dbint_{B(x,\,2^{-k})}
    \lf|u(y)-c\r|^{np/(n-\ez p)}\,dy\r)^{(n-\ez p)/(np)}\\
&&\quad\ls 2^{-k\ez} \lf(\bint_{B(x,\,2^{-k+1}))} [g(y)]^{p}\,dy\r)^{1/p}\\
&&\quad\ls  2^{-k\ez' } \sum_{j\ge k-2}2^{-j(s-\ez')}\lf\{\bint_{B(x,\,2^{-k+1})}
    [g_j(y)]^{p}\,dy\r\}^{1/p},
\end{eqnarray*}
which together with \eqref{e2.6} gives \eqref{e2.5}.
This finishes the proof of Lemma \ref{l2.3}.
\end{proof}

\begin{rem}\label{r2.2}\rm
From Lemmas \ref{l2.1} through \ref{l2.3}, it is easy to see that for all $s\in(0,\,1]$,
$p\in(n/(n+s),\,\fz]$ and $q\in(0,\,\fz]$,
the elements of $\dot M^s_{p,\,q}(\rn)$ are actually  locally integrable.
\end{rem}

\section{Besov and Triebel-Lizorkin spaces on $\rn$}\label{s3}

In this section, with the aid of  grand Littlewood-Paley functions,
we characterize  full ranges of the classical Besov and Triebel-Lizorkin spaces on $\rn$ with $n\in\nn$
and establish their equivalence with the Haj\l asz-Besov and Haj\l asz-Triebel-Lizorkin spaces;
see Theorems \ref{t3.1} and \ref{t3.2}. 
In particular, Theorem \ref{t1.2} follows from (i) and (ii) of Theorem \ref{t3.2} with $p\in(0,\,\fz)$.

We first recall some notions and notation. 
In this section, we work on $\rn$ with $n\in\nn$. Recall that $\zz_+=\nn\cup\{0\}$. 
Let $\cs(\rn)$ be the {\it space of all Schwartz functions}, whose topology is determined by a
family of seminorms, $\{\|\cdot\|_{\cs_{k,\,\,m}(\rn)}\}_{k,\,m\in\zz_+}$, where for all
$k\in\zz_+$, $m\in(0,\,\fz)$ and $\vz\in\cs(\rn)$,
$$\|\vz\|_{\cs_{k,\,m}(\rn)}\equiv\sup_{\az\in\zz_+^n,\ |\az|\le
k}\sup_{x\in\rn}(1+|x|)^{m}|\partial^\az \vz(x)|.$$
Here, for any $\az\equiv(\az_1,\cdots,\az_n)\in\zz_+^n$,
$|\az|=\az_1+\cdots+\az_n$ and $\partial^\az\equiv
(\frac{\partial}{\partial x_1})^{\az_1}\cdots
(\frac{\partial}{\partial x_n})^{\az_n}$.
It is known that
$\cs(\rn)$ forms a locally convex topological vector space.
Denote by
$\cs'(\rn)$ the {\it dual space of $\cs(\rn)$} endowed with the
weak $\ast$-topology.
In what follows, for every $\vz\in\cs(\rn)$, $t>0$ and
$x\in\rn$, set $\vz_t(x)\equiv t^{-n}\vz(t^{-1}x)$.

Then the classical Besov and Triebel-Lizorkin spaces are defined as follows; see \cite{t83}.
\begin{defn}\label{d3.1}\rm
Let $s\in\rr$ and   $p,\,q\in(0,\,\fz]$. Let
$\vz\in\cs(\rn)$ satisfy that
\begin{equation}\label{e3.1}
\supp\widehat\vz\subset \{\xi\in\rn:\ 1/2\le|\xi|\le2\}\ {and}\
|\widehat\vz(\xi)|\ge {\rm constant}>0 \ {if}\ 3/5\le|\xi|\le5/3.
\end{equation}

(i) The {\it homogeneous Triebel-Lizorkin space} $\dot
F^s_{p,\,q}(\rn)$ is defined as the collection of all
$f\in\cs'(\rn)$ such that $\|f\|_{\dot F^s_{p,\,q}(\rn)}<\fz$,
where  when $p<\fz$,
\begin{equation*}
\|f\|_{ \dot F^s_{p,\,q}(\rn)}\equiv\lf\|\lf(\sum_{k\in\zz}2^{ksq}
  |\vz_{2^{-k}}\ast
f|^q\r)^{1/q}\r\|_{L^p(\rn)}
\end{equation*}
 with the usual modification made when $q=\fz$,
and when $p=\fz$,
\begin{equation*}
\|f\|_{ \dot F^s_{\fz,\,q}(\rn)}\equiv\sup_{x\in\rn}\sup_{\ell\in\zz}\lf(\bint_{B(x,\,2^{-\ell})}\sum_{k\ge\ell}2^{ksq}
 |\vz_{2^{-k}}\ast
f(y)|^q\,d\mu(y)\r)^{1/q}
\end{equation*}
with the usual modification made when $q=\fz$.

(ii) The {\it homogeneous  Besov space} $ \dot B^s_{p,\,q}(\rn)$ is
 defined as the collection of
all $f\in\cs'(\rn)$ such that  $ \|f\|_{ \dot B^s_{p,\,q}(\rn)}<\fz$, where
\begin{equation*}
\|f\|_{ \dot B^s_{p,\,q}(\rn)}\equiv\lf(\sum_{k\in\zz}2^{ksq}
\lf\|   \vz_{2^{-k}}\ast
f \r\|_{L^p(\rn)}^q\r)^{1/q}
\end{equation*}
with the usual modification made when $q=\fz$.
\end{defn}

\begin{rem}\label{r3.1}\rm
  Notice that if $\|f\|_{\dot
F^s_{p,\,q}(\rn)}=0$,
then it is easy to see that $f$ is a polynomial.
Denote by $\cp$ the {\it collection of all polynomials on $\rn$}.
So the quotient space $ \dot
F^s_{p,\,q}(\rn)/\cp $ is a quasi-Banach space.
By abuse of the notation,
 the space $ \dot
F^s_{p,\,q}(\rn)/\cp$ is always denoted by  $\dot
F^s_{p,\,q}(\rn)$,
and its element $[f]=f+\cp$ with $f\in \dot
F^s_{p,\,q}(\rn)$ simply by $f$.
A similar observation is also suitable to $\dot
B^s_{p,\,q}(\rn)$.
\end{rem}

Moreover, for each $N\in\zz_+$, denote by
$\cs_N(\rn)$ the {\it space of all functions $f\in \cs(\rn)$ satisfying
that $\int_\rn x^\az f(x)\,dx=0$ for all $\az\in\zz_+^n$ with
$|\az|\le N$}. For convenience, we also write
$\cs_{-1}(\rn)\equiv\cs(\rn)$.

For each $N\in\zz_+\cup\{-1\}$, $m\in(0,\,\fz)$ and $\ell\in\zz_+$,
we define the class  $\ca^\ell_{N,\,m}$ {\it of  test functions} by  
\begin{equation}\label{e3.2}
 \ca^\ell_{N,\,m}\equiv\{\phi\in\cs_N(\rn):\
\|\phi\|_{ \cs_{N+\ell+1,\,m}(\rn)}\le1\}.
\end{equation}
Then the grand Besov and Triebel-Lizorkin spaces are defined as follows.

\begin{defn}\label{d3.2}\rm
Let $s\in\rr$ and $q\in(0,\,\fz]$.
Let $\ca$ be a class of test functions as in \eqref{e3.2}.

(i)  The {\it homogeneous grand Triebel-Lizorkin space} $\ca\dot
F^s_{p,\,q}(\rn)$ is defined as the collection of all
$f\in\cs'(\rn)$ such that $\|f\|_{\ca\dot F^s_{p,\,q}(\rn)}<\fz$,
where  $\|f\|_{\ca\dot F^s_{p,\,q}(\rn)}$ is defined as in
$\|f\|_{ \dot F^s_{p,\,q}(\rn)}$
via replacing $|\vz_{2^{-k}}\ast u|$ by $\sup_{\phi\in\ca}|\phi_{2^{-k}}\ast u|$.

(ii) The {\it homogeneous grand Besov space} $\ca\dot B^s_{p,\,q}(\rn)$ is
 defined as the collection of
all $f\in\cs'(\rn)$ such that  $\|f\|_{\ca\dot B^s_{p,\,q}(\rn)}<\fz$,
where  $\|f\|_{\ca\dot B^s_{p,\,q}(\rn)}$ is defined as in
$\|f\|_{ \dot B^s_{p,\,q}(\rn)}$
via replacing $|\vz_{2^{-k}}\ast u|$ by $\sup_{\phi\in\ca}|\phi_{2^{-k}}\ast u|$.
\end{defn}

\begin{rem}\label{r3.2}\rm
For $\ca\equiv \ca^\ell_{N,\,m}$,  we also write $\ca\dot
F^s_{p,\,q}(\rn)$ as $\ca^\ell_{N,\,m}\dot F^s_{p,\,q}(\rn)$.
Moreover, if $N\in\zz_+$ and $\|f\|_{\ca\dot
F^s_{p,\,q}(\rn)}=0$, then it is easy to see that $f\in\cp_N$,
where $\cp_N$ is the {\it space
of polynomials with degree no more than $N$}.
So, similarly to Remark \ref{r3.1},
the quotient space $\ca\dot
 F^s_{p,\,q}(\rn)/\cp_N$ is always denoted by  $\ca \dot
F^s_{p,\,q}(\rn)$ and its element
 $[f]=f+\cp$   with $f\in\ca\dot
  F^s_{p,\,q}(\rn)$ simply by $f$.
A similar observation is also suitable to $\ca\dot
B^s_{p,\,q}(\rn)$.
\end{rem}

The main results  of this section read as follows.

\begin{thm}\label{t3.1}
Let $s\in\rr$ and $p,\,q\in(0,\,\fz]$.

(i) If
$J\equiv n/\min\{1,\,p,\,q\}$,
  $\ca=\ca^\ell_{N,\,m}$ with  $\ell\in\zz_+$,
$N+1>\max\{s,\,J-n-s\}$ and $m>\max\{J,\,n+N+1\}$,
then $\ca\dot F^s_{p,\,q}(\rn)=\dot F^s_{p,\,q}(\rn)$.

(ii) If $J\equiv n/\min\{1,\,p\}$,
  $\ca=\ca^\ell_{N,\,m}$ with  $\ell\in\zz_+$,
$N+1>\max\{s,\,J-n-s\}$ and $m>\max\{J,\,n+N+1\}$,
then  $\ca\dot B^s_{p,\,q}(\rn)=\dot B^s_{p,\,q}(\rn)$.
\end{thm}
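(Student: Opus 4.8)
The plan is to prove the two inclusions $\ca\dot F^s_{p,\,q}(\rn)\subset\dot F^s_{p,\,q}(\rn)$ and $\dot F^s_{p,\,q}(\rn)\subset\ca\dot F^s_{p,\,q}(\rn)$ (and the analogues for $\dot B^s_{p,\,q}$) separately; as usual, essentially all the work is in the second one. For the first, fix $\vz$ as in \eqref{e3.1}. Since $\supp\widehat\vz$ stays away from the origin, $\vz$ has vanishing moments of every order, so $\vz\in\cs_N(\rn)$, and $\|\vz\|_{\cs_{N+\ell+1,\,m}(\rn)}<\fz$; hence $c\vz\in\ca^\ell_{N,\,m}$ for a small $c>0$, and $|\vz_{2^{-k}}\ast f|\le c^{-1}\sup_{\phi\in\ca}|\phi_{2^{-k}}\ast f|$ pointwise for every $k\in\zz$. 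Passing to the $\dot F^s_{p,\,q}$- (resp.\ $\dot B^s_{p,\,q}$-) quasi-norm and invoking the classical independence of these spaces of the choice of $\vz$ satisfying \eqref{e3.1} \cite{t83}, we obtain $\|f\|_{\dot F^s_{p,\,q}(\rn)}\ls\|f\|_{\ca\dot F^s_{p,\,q}(\rn)}$ and $\|f\|_{\dot B^s_{p,\,q}(\rn)}\ls\|f\|_{\ca\dot B^s_{p,\,q}(\rn)}$.

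For the reverse inclusions I would run an almost-orthogonality argument based on a Calder\'on reproducing formula. Pick $\eta\in\cs(\rn)$ with $\supp\widehat\eta\subset\{1/2\le|\xi|\le2\}$ and $\sum_{j\in\zz}\widehat\eta(2^{-j}\xi)\widehat\vz(2^{-j}\xi)=1$ for $\xi\ne0$, so that $f=\sum_{j\in\zz}\eta_{2^{-j}}\ast\vz_{2^{-j}}\ast f$ in $\cs'(\rn)/\cp_N$ for $f\in\dot F^s_{p,\,q}(\rn)$; here the bound $N+1>\max\{s,\,J-n-s\}$ is what fixes a representative of $f$ modulo $\cp_N$ and gives the convergence of this series. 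Writing $g_j\equiv\vz_{2^{-j}}\ast f$ (so that $\|\{2^{js}g_j\}_{j\in\zz}\|_{L^p(\rn,\,\ell^q)}\sim\|f\|_{\dot F^s_{p,\,q}(\rn)}$) and $K_{k,\,j}\equiv\phi_{2^{-k}}\ast\eta_{2^{-j}}$, we have $\phi_{2^{-k}}\ast f=\sum_{j\in\zz}K_{k,\,j}\ast g_j$ for each $\phi\in\ca$. The crux is a pair of kernel estimates that are uniform in $\phi\in\ca$: for all $x\in\rn$,
\begin{equation*}
|K_{k,\,j}(x)|\ls 2^{-(j-k)(N+\ell+1)}\frac{2^{kn}}{(1+2^k|x|)^{m}}\ \ (j\ge k),\qquad |K_{k,\,j}(x)|\ls 2^{-(k-j)(N+1)}\frac{2^{jn}}{(1+2^j|x|)^{m}}\ \ (j<k).
\end{equation*}
These follow by Taylor-expanding one factor of the convolution and using the vanishing moments of the other: for $j\ge k$ one cashes in the (arbitrarily many) vanishing moments of $\eta$, the expansion costing only the $N+\ell+1$ derivatives of $\phi$ that are controlled on $\ca$; for $j<k$ one uses the $N$ vanishing moments of $\phi$. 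The hypothesis $m>n+N+1$ is exactly what makes the Taylor remainders integrable after rescaling, and uniformity over $\ca$ holds because only the single seminorm bound $\|\phi\|_{\cs_{N+\ell+1,\,m}(\rn)}\le1$ enters.

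Since each $g_j$ is band-limited (Fourier support in $\{|\xi|\sim2^j\}$), a dyadic-annulus decomposition of $K_{k,\,j}\ast g_j$ together with the above bounds gives, for any $r\in(0,\,\min\{1,\,p,\,q\})$ — admissible since $m>J=n/\min\{1,\,p,\,q\}$ — the pointwise estimate $|K_{k,\,j}\ast g_j(x)|\ls 2^{-|j-k|(N+1)}M_r(g_j)(x)$, where $M_r(h)\equiv[M(|h|^r)]^{1/r}$ and $M$ is the Hardy--Littlewood maximal operator. Inserting the weights $2^{ks}$ and summing in $j$ gives $2^{ks}\sup_{\phi\in\ca}|\phi_{2^{-k}}\ast f(x)|\ls\sum_{j\in\zz}2^{-|j-k|(N+1)+(k-j)s}2^{js}M_r(g_j)(x)$, and $N+1>\max\{s,\,J-n-s\}\ge|s|$ makes this a geometric-decay convolution, so by the standard $\ell^q$-convolution inequality (valid for all $q\in(0,\,\fz]$, using \eqref{e2.2} when $q\le1$) the $\ell^q$-norm in $k$ of the left side is, pointwise in $x$, $\ls$ the $\ell^q$-norm in $j$ of $\{2^{js}M_r(g_j)\}$. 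Taking $L^p(\rn)$-quasi-norms and applying the Fefferman--Stein vector-valued maximal inequality (legitimate since $p/r>1$ and $q/r>1$) yields $\|f\|_{\ca\dot F^s_{p,\,q}(\rn)}\ls\|\{2^{js}g_j\}_{j\in\zz}\|_{L^p(\rn,\,\ell^q)}\sim\|f\|_{\dot F^s_{p,\,q}(\rn)}$. The Besov case follows the same scheme but is simpler: one uses $\ell^q(L^p(\rn))$ in place of $L^p(\rn,\,\ell^q)$, needs only the scalar maximal inequality on $L^{p/r}(\rn)$ (hence $r<\min\{1,\,p\}$, i.e.\ $J=n/\min\{1,\,p\}$), and handles the $\ell^q$-sum in $k$ by Young's or Minkowski's inequality for every $q\in(0,\,\fz]$. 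The endpoint $p=\fz$ is treated along the same lines, with $M_r$ and Fefferman--Stein replaced by the analogous estimate for the Carleson-type functional $\sup_{x,\,\ell}(\bint_{B(x,\,2^{-\ell})}\sum_{k\ge\ell}2^{ksq}|\cdot|^q)^{1/q}$.

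I expect the main obstacle to be the kernel estimates for $K_{k,\,j}$ and the extraction from them of the $M_r(g_j)$-bound with constants \emph{uniform over the whole class} $\ca$ and with exponents matched exactly to the stated ranges of $N$ and $m$; once this is in place, the reproducing formula, the geometric $\ell^q$-convolution and the Fefferman--Stein step are routine. A secondary point requiring care is the bookkeeping of the polynomial ambiguity, namely the identification of $\dot F^s_{p,\,q}(\rn)$ (taken modulo $\cp$) with $\ca\dot F^s_{p,\,q}(\rn)$ (taken modulo $\cp_N$), which is where the lower bound on $N$ in terms of $s$ and $J$ is genuinely used.
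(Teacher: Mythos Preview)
Your approach is correct and is a genuine alternative to the paper's. The easy inclusion is handled identically. For the hard inclusion, however, the paper takes a different route: rather than working with the continuous Calder\'on formula $f=\sum_j\eta_{2^{-j}}\ast\vz_{2^{-j}}\ast f$ and direct kernel bounds on $K_{k,j}=\phi_{2^{-k}}\ast\eta_{2^{-j}}$ as you do, it passes to the discrete $\vz$-transform of Frazier--Jawerth, writing $\wz u\ast\phi_{2^{-i}}=\sum_{Q\in\cq}t_Q\,\psi_Q\ast\phi_{2^{-i}}$ with $t_Q=\langle u,\wz\vz_Q\rangle$, and then showing that $|\wz u\ast\phi_{2^{-i}}|\ls\sum_{\ell(R)=2^{-i}}\bigl(\sum_Q a_{RQ}t_Q\bigr)|R|^{-1/2}\chi_R$ for a matrix $\{a_{RQ}\}$ that is almost diagonal on $\dot f^s_{p,q}$ (resp.\ $\dot b^s_{p,q}$) in the sense of \cite[Theorem~3.3]{FJ90}. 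The conclusion then follows from the boundedness of almost diagonal operators and the equivalence $\|u\|_{\dot F^s_{p,q}}\sim\|\{t_Q\}\|_{\dot f^s_{p,q}}$, with \cite{BH06} supplying the relevant kernel estimate uniformly over $\phi\in\ca$. What the paper gains is that the entire analytic core --- the exponent bookkeeping that makes $N+1>\max\{s,J-n-s\}$ and $m>\max\{J,n+N+1\}$ exactly the right thresholds --- is outsourced to the Frazier--Jawerth machinery, so the proof reduces to checking one matrix bound and citing three theorems. Your approach is more self-contained but forces you to do that bookkeeping by hand: in particular, when you pass from the kernel bound on $K_{k,j}$ (at scale $2^{-k}$ for $j\ge k$) to the Peetre-type control $|K_{k,j}\ast g_j|\ls 2^{-|j-k|(N+1)}M_r(g_j)$, the loss of a factor $(1+2^j|\cdot|)^{n/r}$ against a kernel decaying only like $(1+2^k|\cdot|)^{-m}$ must be reconciled precisely with the stated hypothesis on $m$; this is doable (for $j<k$ you can exploit that $\eta$ is Schwartz, and for $j\ge k$ the extra $\ell$ in the exponent $N+\ell+1$ gives room), but it is where you should expect to spend the most care. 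Your treatment of the polynomial ambiguity and of $p=\fz$ is in the right spirit and parallels the paper's.
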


\begin{thm}\label{t3.2} 
Let $\ca\equiv\ca^\ell_{0,\,m}$ with  $\ell\in\zz_+$  and $m>n+1$.

(i) If $s\in(0,\,1)$ and $p,\,q\in(n/(n+s),\,\fz]$, then
$\dot M^s_{p,\,q} (\rn)=\dot F^s_{p,\,q}(\rn)$.

(ii) If $s\in(0,\,1)$, $p \in(n/(n+s),\,\fz]$ and  $ q \in(0,\,\fz]$,
then
$\dot N^s_{p,\,q} (\rn)=\dot B^s_{p,\,q}(\rn)$.

(iii)  If $s\in(0,\,1]$ and $p,\,q \in(n/(n+s),\,\fz]$,
then $\dot M^s_{p,\,q} (\rn)=\ca\dot F^s_{p,\,q}(\rn)$.

(iv)
If $s\in(0,\,1]$, $p \in(n/(n+s),\,\fz]$ and  $ q \in(0,\,\fz]$, then
$\dot N^s_{p,\,q} (\rn)=\ca\dot B^s_{p,\,q}(\rn)$.
\end{thm}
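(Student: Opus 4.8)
The plan is to prove Theorem \ref{t3.2} in two stages, establishing the grand-space identifications (iii) and (iv) directly from the pointwise definitions, and then deducing (i) and (ii) by invoking Theorem \ref{t3.1} to replace the grand spaces $\ca\dot F^s_{p,\,q}(\rn)$ and $\ca\dot B^s_{p,\,q}(\rn)$ by the classical ones. For (iii) and (iv), the restriction $s<1$ in parts (i) and (ii) is exactly what is needed to apply Theorem \ref{t3.1} with $N=0$, since that theorem requires $N+1>s$; when $s=1$ only the grand-space conclusion survives, which is why (iii) and (iv) are stated separately with $s\in(0,\,1]$.

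For the inclusion $\dot M^s_{p,\,q}(\rn)\hookrightarrow\ca\dot F^s_{p,\,q}(\rn)$, I would start with $u\in\dot M^s_{p,\,q}(\rn)$ and a near-optimal $\vec g\in\bd^s(u)$. Since each $\phi\in\ca=\ca^\ell_{0,\,m}$ has vanishing integral, one can write $\phi_{2^{-k}}\ast u(x)=\int_{\rn}\phi_{2^{-k}}(x-y)[u(y)-u(x)]\,dy$, split the integral into the annuli $2^{-k+j}\le|x-y|<2^{-k+j+1}$ for $j\ge -1$, apply the pointwise inequality \eqref{e1.1} on each annulus (using a dyadic-band version of $\vec g$, i.e. passing to $\bd^{s,\,N_1,\,N_2}(u)$ via Theorem \ref{t2.1}), and use the decay $|\phi_{2^{-k}}(x-y)|\ls 2^{kn}(1+2^k|x-y|)^{-m}$ with $m>n+1$ to sum the geometric series in $j$. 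This produces a pointwise bound of the form $2^{ks}\sup_{\phi\in\ca}|\phi_{2^{-k}}\ast u(x)|\ls\sum_{j\ge k-C}2^{-(j-k)(m-n-s)}\,\cm((g_j)^{r})(x)^{1/r}$ for a suitable $r<\min\{1,p,q\}$, where $\cm$ is the Hardy–Littlewood maximal operator; then the Fefferman–Stein vector-valued maximal inequality in $L^{p/r}(\ell^{q/r})$ (valid since $p,q>n/(n+s)\ge r$) together with summation of the convolution in $j$ finishes the estimate. The Besov case (iv) is the same argument using instead scalar maximal bounds and the discrete $\ell^q$-convolution inequality, so I would only indicate the modification.

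For the reverse inclusion $\ca\dot F^s_{p,\,q}(\rn)\hookrightarrow\dot M^s_{p,\,q}(\rn)$, I would fix $\phi$ with the standard Calderón reproducing properties (which may be taken inside $\ca$ up to normalization), write $u=\sum_{k\in\zz}\phi_{2^{-k}}\ast\phi_{2^{-k}}\ast u$ in $\cs'(\rn)/\cp$, and for $x,y$ with $2^{-i-1}\le|x-y|<2^{-i}$ estimate $|u(x)-u(y)|$ by splitting the sum at $k=i$: for $k\le i$ one uses the mean value / smoothness of $\phi_{2^{-k}}$ to gain a factor $(2^k|x-y|)\ls 2^{k-i}$, and for $k>i$ one simply uses $|u(x)-u(y)|\le|u(x)|+|u(y)|$ on each term. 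Bounding $|\phi_{2^{-k}}\ast\phi_{2^{-k}}\ast u(x)|$ by $\cm((\sup_\phi|\phi_{2^{-k}}\ast u|)^r)(x)^{1/r}$ and collecting terms, one reads off that the sequence $g_i(x)\equiv 2^{-is}\sum_k 2^{-|k-i|\sigma}\,2^{ks}\cm((\sup_\phi|\phi_{2^{-k}}\ast u|)^r)(x)^{1/r}$ (with $\sigma>0$ small depending on the smoothness and on $s$) lies in $\bd^s(u)$ after the usual modification on a null set, and the Fefferman–Stein inequality again controls $\|\vec g\|_{L^p(\ell^q)}$ by $\|u\|_{\ca\dot F^s_{p,\,q}(\rn)}$. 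Here the hypothesis $s<1$ (resp. $s\le1$) enters in guaranteeing that the gain $2^{k-i}$ in the low-frequency part beats the loss $2^{-is}\cdot 2^{ks}=2^{(k-i)s}$, i.e. one needs $1>s$ or, at the endpoint $s=1$, a more careful bookkeeping that still goes through for the grand space; I expect this endpoint and the interplay between the regularity parameter $N=0$ and the range $p,q>n/(n+s)$ (forcing $r<\min\{1,p,q\}$ to simultaneously beat the kernel decay and admit the maximal inequality) to be the main technical obstacle. Once (iii) and (iv) are in hand, Theorem \ref{t3.1}(i) with $J=n/\min\{1,p,q\}$, $N=0$ and $m>n+1$ applies precisely when $1>\max\{s,J-n-s\}$ and $m>\max\{J,n+1\}$; the first condition holds since $s<1$ and $p,q>n/(n+s)$ forces $J<n+s$, and the second is arranged by taking $m$ large, giving $\ca\dot F^s_{p,\,q}(\rn)=\dot F^s_{p,\,q}(\rn)$ and hence (i), with (ii) following identically from Theorem \ref{t3.1}(ii).
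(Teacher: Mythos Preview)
Your overall architecture matches the paper: prove (iii) and (iv) directly, then deduce (i) and (ii) from Theorem~\ref{t3.1}. The verification that Theorem~\ref{t3.1} applies with $N=0$ and $m>n+1$ is correct.

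However, the forward inclusion $\dot M^s_{p,\,q}(\rn)\subset\ca\dot F^s_{p,\,q}(\rn)$ has a genuine gap when $\min\{p,q\}\le 1$. After subtracting $u(x)$ and applying \eqref{e1.1} on each annulus, the term $g_j(y)$ integrates to an $L^1$ average of $g_j$ over a ball, hence to $\cm(g_j)(x)$. You then claim this is controlled by $[\cm((g_j)^r)(x)]^{1/r}$ for some $r<\min\{1,p,q\}$, but Jensen's inequality goes the wrong way: for $r<1$ one has $[\cm((g_j)^r)]^{1/r}\le\cm(g_j)$, not the reverse. Thus your bound only yields $\cm(g_j)$, and the Fefferman--Stein inequality then needs $p,q>1$. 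The paper avoids this by subtracting the ball average $u_{B(x,2^{-k})}$ instead of $u(x)$, reducing to oscillation estimates, and then invoking the Poincar\'e--Sobolev inequality of Lemma~\ref{l2.3} to produce genuine $L^r$ averages of $g_j$ with $r=n/(n+\ez)<\min\{p,q\}$ (see \eqref{e3.7}); this is the missing ingredient. Also, your displayed sum runs over $j\ge k-C$, but the annulus decomposition you describe corresponds to scales $\ge 2^{-k}$, i.e.\ to $g_j$ with $j\le k$; the sign of the geometric decay should be checked.

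For the reverse inclusion your Calder\'on--reproducing approach is different from the paper's and, as you note, runs into trouble at $s=1$ because the low-frequency gain $2^{k-i}$ no longer beats $2^{(k-i)s}$. The paper sidesteps this entirely: fix a compactly supported $\vz$ with $\int\vz=1$, telescope $u(x)-u(y)$ via $\vz_{2^{-k}}\ast u$, and observe that both $\vz_{2^{-k_0}}\ast u(x)-\vz_{2^{-k_0}}\ast u(y)$ and $\vz_{2^{-k-1}}\ast u-\vz_{2^{-k}}\ast u$ can be written as $\phi_{2^{-k}}\ast u$ for some $\phi$ that is (a fixed constant times) an element of $\ca$, since translating $\vz$ by $2^{k_0}(x-y)$ with $|2^{k_0}(x-y)|\le 1$ keeps the seminorms bounded and produces a function of integral zero. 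This gives $\vec g\in\wz\bd^{s,\,s,\,0}(u)$ directly with $g_k(x)=2^{ks}\sup_{\phi\in\ca}|\phi_{2^{-k}}\ast u(x)|$, and no separate treatment of $s=1$ is needed.
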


\begin{rem}\label{r3.3}\rm
(i) Recall that
Theorem \ref{t3.1} for $\dot F^s_{p,\,q}(\rn)$ with $p<\fz$
was already given in \cite[Theorem 1.2]{kyz09}.
The proof of Theorem \ref{t3.1} for the full range of Besov and Triebel-Lizorkin spaces
is similar to that of \cite[Theorem 1.2]{kyz09}.
For the reader's convenience, we sketch it below.

(ii) For all $s\in(0,\,1)$ and $p\in(n/(n+s),\,\fz)$,
combining \cite[Corollary 1.3]{y03},
\cite[Corollary 1.2]{kyz09} and Proposition \ref{p2.1}, we already have
 $\dot M^{s}_{p,\,\fz}(\rn)=\dot M^{s,\,p}(\rn)=\dot F^s_{p,\,\fz}(\rn)$.

(iii) When $s=1$, as proved in \cite{h96,ks08},  $\dot M^{1,\,p} (\rn)=\dot W^{1,\,p}(\rn)$ for $p\in(1,\,\fz)$
 and $\dot M^{1,\,p}(\rn)=\dot H^{1,\,p} (\rn)$ for $p\in(n/(n+1),\,1]$,
 which together with Proposition \ref{p2.1} and \cite{t83} implies that
$\dot M^{1}_{p,\,\fz}(\rn)=\dot M^{1,\,p}(\rn)=\dot F^1_{p,\,2}(\rn)$
for all $p\in(n/(n+1),\,\fz)$.
Here $\dot W^{1,\,p}(\rn)$ with $p\in(1,\,\fz)$ denotes the homogeneous Sobolev space
and $\dot H^{1,\,p}(\rn)$ with $p\in(0,\,1]$   the homogeneous Hardy-Sobolev space.
\end{rem}

\begin{proof}[Proof of Theorem \ref{t3.1}]
Notice that $\|u\|_{\dot F^s_{p,\,q}(\rn)}\ls \|u\|_{\ca\dot F^s_{p,\,q}(\rn)}$
for all $u\in\cs'(\rn)$,
which implies that  $ \ca\dot F^s_{p,\,q}(\rn)\subset  \dot F^s_{p,\,q}(\rn)$.
Similarly, $ \ca\dot B^s_{p,\,q}(\rn)\subset  \dot B^s_{p,\,q}(\rn)$.
Conversely,  assume that $u\in \dot F^s_{p,\,q}(\rn)$ or $u\in \dot B^s_{p,\,q}(\rn)$.
Let $\psi\in\cs(\rn)$ satisfy the
same conditions as $\vz$  and
$\sum_{k\in\zz}\widehat{\vz}(2^{-k}\xi)\widehat\psi(2^{-k}\xi)=1$
for all $\xi\in\rn\setminus\{0\}$; see \cite[Lemma (6.9)]{FJW91} for the existence of $\psi$.
Then, by the Calder\'on reproducing formula,
for $f\in\cs'(\rn)$, there exist polynomials $P_u$ and
$\{P_i\}_{i\in\zz}$ depending on $f$ such that
\begin{equation}\label{e3.3}
u+P_u=\lim_{i\to-\fz}\lf\{\sum_{j=i}^\fz
 \vz_{2^{-j}}\ast\psi_{2^{-j}}\ast
u+P_i\r\},\end{equation}
where the series converges in
$\cs'(\rn)$; see, for example, \cite{Pe76,FJ85}.

Moreover, if $u\in \dot F^s_{p,\,q}(\rn)$ with $p\in(0,\,\fz)$,
then it is known that the degrees of the polynomials
$\{P_i\}_{i\in\zz}$ here are no more than $\lfloor s-n/p\rfloor$;
see \cite[pp.\,153-155]{FJ90} and \cite{FJ85}.
Furthermore, as shown in \cite[pp.\,153-155]{FJ90}, $u+P_u$ is
the canonical representative of $u$ in the sense that if $i=1,\,2$,
$\vz^{(i)},\,\psi^{(i)}$ satisfy \eqref{e3.1} and
$\sum_{k\in\zz}\widehat{
\vz^{(i)}}(2^{-k}\xi)\widehat{\psi^{(i)}}(2^{-k}\xi)=1$ for all
$\xi\in\rn\setminus\{0\}$, then
$P^{(1)}_{u}-P^{(2)}_u$ is a polynomial of degree no more than
$\lfloor s-n/p\rfloor$, where $P^{(i)}_u$ is as in \eqref{e3.3}
corresponding to $\vz^{(i)},\,\psi^{(i)}$ for $i=1,\,2$. So, in this
sense, we identify $u$ with $\wz u\equiv u+P_u$.

We point out that the above argument still holds when $u\in \dot B^s_{p,\,q}(\rn)$ or
$u\in \dot F^s_{p,\,q}(\rn)$ with the full range.
In fact, by \cite[pp.\,52-56]{Pe76}, if  $u\in \dot B^s_{p,\,\fz}(\rn)$, then the above arguments hold.
Moreover, by
 $\dot F^s_{\fz,\,q}(\rn)\subset \dot B^s_{\fz,\,\fz}(\rn)$ and
$ \dot B^s_{p,\,q}(\rn)\subset \dot B^s_{p,\,\fz}(\rn)$ for all possible $s,\,p$ and $q$,
 the above arguments hold for all $u\in \dot B^s_{p,\,q}(\rn)$ or
$u\in \dot F^s_{p,\,q}(\rn)$ with the full range.

Let $\widetilde\vz(x)\equiv{\vz(-x)}$ for all $x\in\rn$. Denote by
$\cq$ the collection of all  dyadic cubes on $\rn$. For every dyadic
cube $Q\equiv2^{-j}k+2^{-j}[0,\,1]^n$ with certain
$k\in\zz^n$,  set $x_Q\equiv2^{-j}k$, denote by
$\ell(Q)\equiv2^{-j}$ the side length of $Q$ and write
$\vz_Q(x)\equiv2^{j n/2}\vz(2^j x-k)=2^{-jn/2}\vz_{2^{-j}}(x-x_Q)$
for all $x\in\rn$.
Then for all $u\in \dot
F^s_{p,\,q}(\rn)$ or $u\in \dot
B^s_{p,\,q}(\rn)$, $\phi\in\cs_N(\rn)$ with  $N\ge \lfloor
s-n/p\rfloor$, $i\in\zz$ and $x\in\rn$, by \cite{FJ85,FJW91}, 
\cite[Lemma 2.8]{BH06}  and an argument as in the proof of \cite[Theorem 1.2]{kyz09}, we have
$$\wz u\ast\phi_{2^{-i}}(x)= \sum_{Q\in\cq}\langle u,\,
\wz\vz_Q \rangle \psi_Q\ast\phi_{2^{-i}}(x) =\sum_{Q\in\cq}t_Q\psi_Q
\ast\phi_{2^{-i}}(x),$$ where $t_Q=\langle u,\,\wz\vz_Q \rangle$.
Moreover, by the proof of \cite[Theorem 1.2]{kyz09} again, for all $R\in\cq$ with
$\ell(R)=2^{-i}$,
 we have
\begin{eqnarray*}
|\wz u\ast\phi_{2^{-i}}|\ls\sum_{\ell(R)=2^{-i}}\lf(\sum_{Q\in\cq}a_{RQ}t_Q\r)|R|^{-1/2}\chi_R,
\end{eqnarray*}
where
$$a_{RQ}\le \lf[\frac{\ell(R)}{\ell(Q)}\r]^s
\lf[1+\frac{|x_R-x_Q|}{\max\{\ell(R),\,\ell(Q)\}}\r]^{-J-\ez}
\min\lf\{\lf[\frac{\ell(R)}{\ell(Q)}\r]^{\frac{n+\ez}2},\,
\lf[\frac{\ell(Q)}{\ell(R)}\r]^{J+\frac{\ez-n}2} \r\}$$ for certain
$\ez>0$. If $J\equiv n/\min \{1,\,p,\,q\}$, then $\{a_{RQ}\}_{R,\,Q\in\cq}$ forms an almost diagonal
operator on $\dot f^s_{p,\,q}(\rn)$ and hence, is bounded
on  $\dot f^s_{p,\,q}(\rn)$, 
while if $J\equiv n/\min \{1,\,p\}$, then $\{a_{RQ}\}_{R,\,Q\in\cq}$ forms an almost diagonal
operator on $\dot f^s_{p,\,q}(\rn)$ and hence, is bounded on $\dot b^s_{p,\,q}(\rn)$;
see \cite[Theorem 3.3]{FJ90} and also
\cite[Theorem (6.20)]{FJW91}.
Here,  $\dot f^s_{p,\,q}(\rn)$ denotes the {\it set of all  sequences} $\{t_Q\}_{Q\in\cq}$ such that
\begin{equation*}
  \|\{t_Q\}_{Q\in\cq}\|_{\dot
f^s_{p,\,q}(\rn)}\equiv\lf\|\lf(\sum_{Q\in\cq}[|Q|^{-s/n-1/2}|t_Q|
\chi_Q]^q\r)^{1/q}\r\|_{L^p(\rn)}<\fz,
\end{equation*}
and
$\dot b^s_{p,\,q}(\rn)$ the {\it set of  all  sequences} $\{t_Q\}_{Q\in\cq}$ such that
\begin{equation*}
  \|\{t_Q\}_{Q\in\cq}\|_{\dot
b^s_{p,\,q}(\rn)}\equiv \lf\{\sum_{k\in\zz}\lf\|\sum_{Q\in\cq,\,\ell(Q)=2^{-k}}[|Q|^{-s/n-1/2}|t_Q|
\chi_Q]\r\|_{L^p(\rn)} ^q\r\}^{1/q}<\fz.
\end{equation*}

Moreover, by \cite[Theorem 2.2]{FJ90} or \cite[Theorem (6.16)]{FJW91},
$\|u\|_{\dot F^s_{p,\,q}(\rn)}\sim \|\{t_Q\}_{Q\in\cq}\|_{\dot
f^s_{p,\,q}(\rn)},$ which then implies that
\begin{eqnarray*}
\|\wz u\|_{\ca\dot F^s_{p,\,q}(\rn)} &&\ls \lf\|\lf\{\sum_{Q\in\cq}a_{RQ}t_Q \r\}_{R\in\cq}\r\|_{\dot f^s_{p,\,q}(\rn)}
\ls\lf\| \{t_Q \}_{Q\in\cq}\r\|_{\dot f^s_{p,\,q}(\rn)} \sim
\|u\|_{\dot F^s_{p,\,q}(\rn)}.
\end{eqnarray*}
This argument still holds with the spaces $ \dot F $ replaced by $\dot B $ due to the equivalence that
$\|u\|_{\dot B^s_{p,\,q}(\rn)}\sim \|\{t_Q\}_{Q\in\cq}\|_{\dot
b^s_{p,\,q}(\rn)}$ given by \cite[(1.11)]{FJ85}.
This finishes the proof of Theorem \ref{t3.1}.
\end{proof}

\begin{proof}[Proof of Theorem \ref{t3.2}.]
Observe that with the aid of Theorem \ref{t3.1}, (iii) and (iv) of Theorem \ref{t3.2} imply
(i) and (ii) of Theorem \ref{t3.2}.
So it suffices to prove (iii) and (iv) of Theorem \ref{t3.2}.

We first prove Theorem \ref{t3.2}(iii), namely, $\dot M^s_{p,\,q}(\rn)=\ca\dot F^s_{p,\,q}(\rn)$.
To prove $\dot M^s_{p,\,q}(\rn)\subset \ca\dot F^s_{p,\,q}(\rn)$, let
$u\in\dot M^s_{p,\,q}(\rn)$ and choose $\vec g\in\bd^s(u)$ such that
$\|\vec g\|_{L^p(\rn,\,\ell^q)}\ls \|u\|_{\dot M^s_{p,\,q}(\rn)}$.
Then for all $\phi\in\ca$, $x\in\rn$ and $k\in\zz$,
\begin{eqnarray*}
\lf|\phi_{2^{-k}}\ast u(x)\r|&&=\lf|\int_\rn\phi_{2^{-k}}(x-y)[u(y)-u_{B(x,\,2^{-k})}]\,dy\r|\\
&&\ls\sum_{j=0}^\fz2^{-2js}\bint_{B(x,\,2^{-k+j})}|u(y)-u_{B(x,\,2^{-k})}|\,dy.
\end{eqnarray*}
Since
$$\bint_{B(x,\,2^{-k+j})}|u(y)-u_{B(x,\,2^{-k})}|\,dy
\ls \sum_{i=0}^j\bint_{B(x,\,2^{-k+i})}|u(y)-u_{B(x,\,2^{-k+i})}|\,dy,$$
we then have
\begin{equation}\label{e3.4}
\lf|\phi_{2^{-k}}\ast u(x)\r|
 \ls\sum_{j=0}^\fz  2^{-2js}\bint_{B(x,\,2^{-k+j})}|u(y)-u_{B(x,\,2^{-k+j})}|\,dy.
\end{equation}

If $p,\,q\in(1,\,\fz]$, then by Lemma \ref{l2.1}, we have
\begin{eqnarray}\label{e3.5}
 \lf|\phi_{2^{-k}}\ast u(x)\r|
&&\ls \sum_{j=0}^\fz 2^{-2js} 2^{-ks+js} \sum_{i= k-j-3}^{k-j}\dbint_{B(x,\, 2^{-k+j+2})}
    g_i(y) \,dy\\
&&\ls 2^{-2ks}\sum_{j=-\fz}^{k } 2^{js} \bint_{B(x,\,2^{-j+2})}   g_j(z)\,dz
\ls 2^{-2ks}\sum_{j=-\fz}^{k } 2^{js}\cm(  g_{j})(x),\nonumber
\end{eqnarray}
where and in what follows, $\cm$ denotes the {\it Hardy-Littlewood maximal function}.

Thus, for $p,\,q\in(1,\,\fz)$,
  by the H\"older inequality and the Fefferman-Stein
vector-valued maximal inequality on $\cm$ (see \cite{fs71}),
we have
\begin{eqnarray}\label{e3.6}
\|u\|_{\ca\dot F^s_{p,\,q}(\rn)}
&&\ls \lf\|\lf(\sum_{k\in\zz}2^{-ksq}\lf[\sum^{k }_{j=-\fz}2^{js}
\cm( g_{j})\r]^q\r)^{1/q}\r\|_{L^p(\rn)}\\
&&\ls \lf\|\lf(\sum_{k\in\zz}2^{-ks}\sum^{k }_{j=-\fz}2^{js}
\lf[\cm(  g_{j})\r]^q\r)^{1/q}\r\|_{L^p(\rn)}\nonumber\\
&&\ls \lf\|\lf(\sum_{j\in\zz}
\lf[\cm( g_{j})\r]^q\r)^{1/q}\r\|_{L^p(\rn)}\ls\|\vec g\|_{L^p(\rn,\,\ell^q)}\ls \|u\|_{\dot M^s_{p,\,q}(\rn)}.\nonumber
\end{eqnarray}

If $p\in(n/(n+s),\,1]$ or $q\in(n/(n+s),\,1]$, by \eqref{e3.4} and Lemma \ref{l2.3}, 
choosing $\ez,\,\ez'\in(0,\,s)$ such that $\ez<\ez'$ and $n/(n+\ez')<\min\{p,\,q\}$,   for all $x\in\rn$,
\begin{eqnarray}\label{e3.7}
 \lf|\phi_{2^{-k}}\ast u(x)\r|
&&\ls \sum_{j=0}^\fz 2^{-2js} 2^{-(k-j)\ez'}
\sum_{i\ge k-j-2}2^{-i(s-\ez')}\\
&&\quad\quad\times \lf\{\dbint_{B(x,\,2^{-(k-j)+1})}
    [g_{i}(y)]^{n/(n+\ez)}\,dy\r\}^{(n+\ez)/n}\nonumber\\
&&\ls \sum_{i=-\fz}^{k-2} \sum_ {j\ge k-i-2} 2^{-2js} 2^{-(k-j)\ez'}
 2^{-i(s-\ez')}\nonumber\\
&&\quad\quad\times \lf\{\dbint_{B(x,\,2^{-(k-j)+1})}
    [g_{i}(y)]^{n/(n+\ez)}\,dy\r\}^{(n+\ez)/n}\nonumber\\
 &&\ls 2^{-2sk}\sum_{i=-\fz}^{k-2}2^{is}\lf[\cm([g_{i}]^{n/(n+\ez)})(x)\r]^{(n+\ez)/n}.\nonumber
\end{eqnarray}
Thus, for $p,\,q \in(n/(n+s),\,\fz)$ and
$\min\{p,\,q\}\in(n/(n+s),1]$,
by the H\"older inequality when $q\in(1,\,\fz)$, \eqref{e2.2} when $q\in(n/(n+s),\,1]$ and the
Fefferman-Stein vector-valued maximal inequality, similarly to \eqref{e3.6}, we obtain
\begin{eqnarray*}
\|u\|_{\ca\dot F^s_{p,\,q}(\rn)}
&&\ls \lf\|\lf(\sum_{k\in\zz}2^{-ksq}\lf[\sum^{k-2}_{j=-\fz}2^{js}
\lf[\cm([g_{j}]^{n/(n+\ez)})\r]^{(n+\ez)/n}\r]^q\r)^{1/q}\r\|_{L^p(\rn)}\\
&&\ls \lf\|\lf(\sum_{j\in\zz}
\lf [\cm([g_{j}]^{n/(n+\ez)})\r]^{(n+\ez)q/n}\r)^{1/q}\r\|_{L^p(\rn)} \ls\|\vec g\|_{L^p(\rn,\,\ell^q)}\ls \|u\|_{\dot M^s_{p,\,q}(\rn)}.\nonumber
\end{eqnarray*}

If $p\in(n/(n+s),\,\fz)$ and $q=\fz$,   by \eqref{e3.5}, \eqref{e3.7}, the
Fefferman-Stein vector-valued maximal inequality
and an argument similar to \eqref{e3.6}, we   have $\|u\|_{\ca\dot F^s_{p,\,q}(\rn)}
 \ls \|u\|_{\dot M^s_{p,\,q}(\rn)}.$

If $p=\fz$ and $q\in(1,\,\fz)$, then for all $x\in\rn$ and all $\ell\in\zz$,
by the H\"older inequality and \eqref{e3.5}, we have that
\begin{eqnarray*}
&&\ \bint_{B(x,\,2^{-\ell})}
\sum_{k\ge\ell}2^{ksq}\sup_{\phi\in\ca}|\phi_{2^{-k}}\ast u(z)|^q\,dz\\
&&\quad\ls  \bint_{B(x,\,2^{-\ell})}
\sum_{k\ge\ell}2^{-ksq}\lf[\sum_{j=-\fz}^{k } 2^{js} \bint_{B(z,\,2^{-j+2})}   g_j(y)\,dy\r]^q\,dz  \\
&&\quad\ls  \bint_{B(x,\,2^{-\ell})}
\sum_{k\ge\ell}2^{-ks}\sum_{j=-\fz}^{k} 2^{js} \lf[\bint_{B(z,\,2^{-j+2})}   g_j(y)\,dy\r]^q\,dz.
\end{eqnarray*}
We continue to estimate the last quantity by dividing $\sum_{j=-\fz}^{k} $ into
$\sum_{j=-\fz}^{\ell}$ and $\sum_{j=\ell+1}^{k} $ when $k>\ell$.
Notice that for all $z\in\rn$ and $j\in\zz$,  by the H\"older inequality, we obtain
\begin{eqnarray}\label{e3.8}
\lf[\bint_{B(z,\,2^{-j+2})}   g_j(y)\,dy\r]^q&&
\le \bint_{B(z,\,2^{-j+2})} \lf[  g_j(y)\r]^q\,dy
\le
 \|u\|^q_{\dot M^s_{\fz,\,q}(\rn)}.
\end{eqnarray}
From this, it follows that 
\begin{eqnarray*}
\bint_{B(x,\,2^{-\ell})}
\sum_{k\ge\ell}2^{-ks}\sum_{j=-\fz}^{\ell} 2^{js} \lf[\bint_{B(z,\,2^{-j+2})} g_j(y)\,dy\r]^q\,dz
 \ls  \|u\|^q_{\dot M^s_{\fz,\,q}(\rn)}.
\end{eqnarray*}
Moreover, since $B(z,\,2^{-j+2})\subset B(x,\,2^{-\ell+2})$ for all $j\ge \ell+1$ and all $z\in B(x,\,2^{-\ell})$,
 by the $L^q(\rn)$-boundedness of $\cm$, we  obtain  that
\begin{eqnarray*}
&&\bint_{B(x,\,2^{-\ell})}
\sum_{k>\ell}2^{-ks}\sum_{j=\ell+1}^{k} 2^{js} \lf[\bint_{B(z,\,2^{-j+2})}   g_j(y)\,dy\r]^q\,dz\\
&&\quad\ls\bint_{B(x,\,2^{-\ell})}
\sum_{k>\ell}2^{-ks}\sum_{j=\ell+1}^{k} 2^{js} \lf[\cm(  g_j\chi_{B(x,\,2^{-\ell+2})})(z)\r]^q\,dz  \\
&&\quad\ls  \sum_{j=\ell+1}^\fz\bint_{B(x,\,2^{-\ell })}
\lf[\cm(  g_j\chi_{B(x,\,2^{-\ell+2})})(z)\r]^q\,dz\\
&&\quad\ls  \sum_{j=\ell+1}^\fz\bint_{B(x,\,2^{-\ell+2})}
\lf[  g_j(y)\r]^q\,dy \ls  \|u\|^q_{\dot M^s_{\fz,\,q}(\rn)}.
\end{eqnarray*}
Thus, $\|u\|_{\ca\dot F^s_{\fz,\,q}(\rn)}\ls \|u\|_{\dot M^s_{\fz,\,q}(\rn)}.$

If $p=\fz$ and $q=\fz$, then the proof is similar but easier than the case $p=\fz$ and $q\in(0,\,\fz)$.
We omit the details.

If $p=\fz$ and $q\in(n/(n+s),\,1]$, then from  \eqref{e3.7} with
  $\ez\in(0,\,s)$ satisfying that $n/(n+\ez)<q$,
it follows that
\begin{eqnarray*}
&&\ \bint_{B(x,\,2^{-\ell})}
\sum_{k\ge\ell}2^{ksq}\sup_{\phi\in\ca}|\phi_{2^{-k}}\ast u(z)|^q\,dz\\
&&\quad\ls  \bint_{B(x,\,2^{-\ell})}
\sum_{k\ge\ell}2^{ksq}\sum_{j=0}^\fz 2^{-2jsq} 2^{-(k-j)\ez q}
\sum_{i\ge k-j-2}2^{-i(s-\ez)q}\nonumber\\
&&\quad\quad\quad\times  \lf\{\dbint_{B(x,\,2^{-(k-j)+1})}
    [g_{i}(y)]^{n/(n+\ez)}\,dy\r\}^{(n+\ez)q/n} \,dz  \\
&& \quad\ls  \bint_{B(x,\,2^{-\ell})}
\sum_{j=-\fz}^\fz  2^{-sq\max\{\ell,\,j\}} 2^{2jsq} 2^{-j\ez q}
\sum_{i\ge j-2}2^{-i(s-\ez)q}\nonumber\\
&&\quad\quad\quad\times  \lf\{\dbint_{B(x,\,2^{-j+1})}
    [g_{i}(y)]^{n/(n+\ez)}\,dy\r\}^{(n+\ez)q/n} \,dz.
\end{eqnarray*}
Notice that similarly to \eqref{e3.8}, for $i\ge j-1$ and $x\in\rn$, by the H\"older inequality and $q>n/(n+\ez)$,
we have
$$\lf\{\dbint_{B(x,\,2^{-j+1})}
    [g_{i}(y)]^{n/(n+\ez)}\,dy\r\}^{(n+\ez) /n}
\le \lf\{\dbint_{B(x,\,2^{-j+1})}
    [g_{i}(y)]^q\,dy\r\}^{1/q}\le\|u\|_{\dot M^s_{\fz,\,q}(\rn)},$$
which implies that
\begin{eqnarray*}
&&\bint_{B(x,\,2^{-\ell})}
\sum_{j=-\fz}^\ell 2^{-\ell sq}2^{2jsq} 2^{-j\ez q}\sum_{i\ge j-2}2^{-i(s-\ez)q}\\
&&\quad\quad\quad\times
\lf\{\dbint_{B(x,\,2^{-j+1})}
    [g_{i}(y)]^{n/(n+\ez)}\,dy\r\}^{(n+\ez)q/n} \,dz\\
 &&\quad\ls \|u\|^q_{\dot M^s_{\fz,\,q}(\rn)}
 \sum_{j=-\fz}^\ell 2^{-\ell sq}2^{jsq} \ls \|u\|^q_{\dot M^s_{\fz,\,q}(\rn)}.
\end{eqnarray*}
On the other hand, from $(n+\ez)q/n>1$ and the $L^{q(n+\ez)/n}(\rn)$-boundedness of $\cm$, it follows that
\begin{eqnarray*}
&&\bint_{B(x,\,2^{-\ell})}
\sum_{j=\ell+1}^\fz 2^{jsq} 2^{-j\ez q}
\sum_{i\ge j-2}2^{-i(s-\ez)q}\lf\{\dbint_{B(x,\,2^{-j+1})}
    [g_{i}(y)]^{n/(n+\ez)}\,dy\r\}^{(n+\ez)q/n} \,dz\\
&&\quad\ls\bint_{B(x,\,2^{-\ell})}
\sum_{i\ge \ell-1}\lf[\cm([g_{i}]^{n/(n+\ez)}\chi_{B(x,\,2^{-\ell})})(z)\r]^{(n+\ez)q/n} \,dz\\
&&\quad\ls \bint_{B(x,\,2^{-\ell})}
\sum_{i\ge \ell-1} [g_{i}(z)]^q \,dz\ls \|u\|^q_{\dot M^s_{\fz,\,q}(\rn)},
\end{eqnarray*}
which implies that $\|u\|_{\ca\dot F^s_{\fz,\,q}(\rn)}\ls \|u\|_{\dot M^s_{\fz,\,q}(\rn)}.$
We have completed the proof of that $\dot M^s_{p,\,q}(\rn) \subset\ca\dot F^s_{p,\,q}(\rn)$.

To  prove $\ca\dot F^s_{p,\,q}(\rn)\subset \dot M^s_{p,\,q}(\rn)$, let $u\in\ca\dot F^s_{p,\,q}(\rn)$. Since
$\ca\dot F^s_{p,\,q}(\rn)\subset \ca\dot F^s_{p,\,\fz}(\rn)=\dot M^{s,\,p}(\rn)\subset L^1_\loc(\rn)$
by Proposition \ref{p2.1} and Lemma \ref{l2.1} together with \cite[Corollary 2.1]{kyz09},
we know that $u\in L^1_\loc(\rn)$.
Fix $\vz\in\cs(\rn)$ with compact support
 and $\int_\rn \vz(x)\,dx=1$. Notice that
$\vz_{2^{-k}}\ast u(x)\to u(x)$ as $k\to\fz$
for almost all $x\in\rn$.
Then for almost all $x, y\in\rn$, letting $k_0\in\zz$ such that
$2^{-k_0-1}\le|x-y|< 2^{-k_0}$,
we have
\begin{eqnarray*}
        |u(x)-u(y)|&&\le|\vz_{2^{-k_0}}\ast u(x)-
\vz_{2^{-k_0}}\ast u(y)|\\
        &&\quad+\sum_{k\ge k_0}(|\vz_{2^{-k-1}}\ast u(x)-
\vz_{2^{-k}}\ast u(x)|+ |\vz_{2^{-k-1}}\ast u(y)- \vz_{2^{-k}}\ast
u(y)|).
\end{eqnarray*}
Write $\vz_{2^{-k_0}}\ast u(x)-\vz_{2^{-k_0}}\ast u(y)
=(\phi^{(x,\,y)})_{2^{-k_0}}\ast f(x)$ with
$\phi^{(x,\,y)}(z)\equiv\vz(z-2^{k_0}[x-y])-\vz(z)$ and
$\vz_{2^{-k-1}}\ast u(x)- \vz_{2^{-k}}\ast
u(x)=(\vz_{2^{-1}}-\vz)_{2^{-k}}\ast u(x)$. Notice that
$\vz_{2^{-1}}-\vz$ and $\phi^{(x,\,y)}$ are fixed constant multiples
of elements of $\ca$. For all $k\in\zz$ and $x\in\rn$, set
\begin{equation}\label{e3.9}
g_{k}(x)\equiv 2^{ks}
\sup_{\phi\in\ca }|\phi_{2^{-k}}\ast u(x)|.
\end{equation}
Then we have
\begin{eqnarray*}
        |u(x)-u(y)| \ls\sum_{k\ge k_0} 2^{-ks}[g_{k }(x)+g_{k }(y)],
\end{eqnarray*}
which means that $\vec g\equiv \{g_{k}\}_{k\in\zz}\in\wz \bd^{s,\,s,\,0}(u)$.

Thus, if $p\in(n/(n+s),\,\fz)$, then  Theorem \ref{t2.1} implies that
\begin{eqnarray*}
\|u\|_{\dot M^s_{p,\,q}(\rn)}&&\ls \|\vec g\|_{L^p(\rn;\,\ell^q)}\ls \lf\|\lf(\sum_{j\in\zz} 2^{j sq}
\sup_{\phi\in\ca}|\phi_{2^{-j}}\ast u|^q\r)^{1/q}\r\|_{L^p(\rn)}\ls
\|u\|_{\ca\dot F^s_{p,\,q}(\rn)}.
\end{eqnarray*}

If $p=\fz$ and $q\in(n/(n+s),\,\fz)$, 
then by Theorem \ref{t2.2}, we obtain
\begin{eqnarray*}
 \bint_{B(x,\,2^{-\ell})} \sum_{k\ge\ell} [g_k(y)]^q\,dy
&&\ls \bint_{B(x,\,2^{-\ell})} \sum_{k\ge\ell}2^{ksq}
 \sup_{\phi\in\ca}|\phi_{2^{-k}}\ast u(y)|^q\,dy
 \ls
\|u\|_{\ca\dot F^s_{\fz,\,q}(\rn)}, 
\end{eqnarray*}
for all $x\in\rn$ and $\ell\in\zz$. 
Thus, $\|u\|_{\dot M^s_{\fz,\,q}(\rn)}\ls\|u\|_{\ca\dot F^s_{\fz,\,q}(\rn)}.$

If $p=\fz$ and $q=\fz$, the proof is similar and easier. We omit the details.
This finishes the proof of Theorem \ref{t3.2}(iii).

Now, we prove Theorem \ref{t3.2}(iv), namely, $\dot N^s_{p,\,q}(\rn)= \ca \dot B^s_{p,\,q}(\rn)$.
To prove $\dot N^s_{p,\,q}(\rn)\subset \ca\dot B^s_{p,\,q}(\rn)$,
let $\ez\in(0,\,s)$ such that $n/(n+\ez)<p$ and notice that \eqref{e3.7} still holds here.
Then for all $u\in\dot N^s_{p,\,q}(\rn)$ and $\vec g\in\bd^s(u)$,
by \eqref{e3.7}, we have
\begin{eqnarray*}
\|u\|_{\ca\dot B^s_{p,\,q}(\rn)}&&\ls
\lf(\sum_{k\in\zz}2^{-ksq}\lf\|\sum_{j=-\fz}^{k-2} 2^{js}[\cm([  g_{j}]^{n/(n+\ez)})]^{(n+\ez)/n}\r\|_{L^p(\rn)}^q\r)^{1/q}.
\end{eqnarray*}

Now we consider two cases.
If $p\in(n/(n+\ez),\,1]$, by \eqref{e2.2} with $q$ there replaced by $p$, we further obtain
\begin{eqnarray*}
\|u\|_{\ca\dot B^s_{p,\,q}(\rn)}
&&\ls
\lf\{\sum_{k\in\zz}2^{-ksq }\lf(\sum_{j=-\fz}^{k-2} 2^{jsp }\lf\|[\cm([  g_{j}]^{n/(n+\ez)})]^{(n+\ez)/n}\r\|^p_{L^p(\rn)}\r)^{q/p}\r\}^{1/q}.
\end{eqnarray*}
From this, the H\"older inequality when $q>p$ and
 \eqref{e2.2} with $q$ there replaced by $q/p$ when $q\le p$,
and the $L^{p(n+\ez)/n}(\rn)$-boundedness of $\cm$, it follows that
\begin{eqnarray*}
\|u\|_{\ca\dot B^s_{p,\,q}(\rn)}
&&\ls
\lf(\sum_{k\in\zz}2^{-ksq/2}\sum_{j=-\fz}^{k-2} 2^{jsq/2}\lf\|  g_{j}\r\|_{L^p(\rn)}^q\r)^{1/q}\\
&&\sim  \lf(\sum_{j\in\zz}
 \lf\| g_{j}\r\|_{L^p(\rn)}^q\r)^{1/q}\sim\|\vec g\|_{\ell^q(L^p(\rn))}\ls \|u\|_{\dot N^s_{p,\,q}(\rn)}.
\end{eqnarray*}

If $p\in(1,\,\fz]$, then by the Minkowski inequality, we have
\begin{eqnarray*}
\|u\|_{\ca\dot B^s_{p,\,q}(\rn)}&&\ls
\lf\{\sum_{k\in\zz}2^{-ksq}\lf(\sum_{j=-\fz}^{k } 2^{js}\lf\|[\cm([  g_{j}]^{n/(n+\ez)})]^{(n+\ez)/n}\r\|_{L^p(\rn)}\r)^q\r\}^{1/q},
\end{eqnarray*}
which together with the H\"older inequality or \eqref{e2.2} when $q\in(0,\,1]$,
and the $L^{p(n+\ez)/n}(\rn)$-boundedness of $\cm$ also yields that
\begin{eqnarray*}
\|u\|_{\ca\dot B^s_{p,\,q}(\rn)}
&&\ls
\lf(\sum_{k\in\zz}2^{-ksq/2}\sum_{j=-\fz}^{k-2} 2^{jsq/2}\lf\|  g_{j}\r\|_{L^p(\rn)}^q\r)^{1/q}\ls \|u\|_{\dot N^s_{p,\,q}(\rn)}.
\end{eqnarray*}
Thus, $\dot N^s_{p,\,q}(\rn)\subset \ca\dot B^s_{p,\,q}(\rn)$.

Conversely, to show $\ca\dot B^s_{p,\,q}(\rn)\subset \dot N^s_{p,\,q}(\rn)$,
let $u\in \ca\dot B^s_{p,\,q}(\rn)$.
Then we claim that $u\in L^1_\loc(\rn)$.
Assume that this claim holds for the moment.
Taking $\vec g\equiv\{g_k\}_{k\in\zz}$ with $g_k$ as in \eqref{e3.9} and by an argument similar to
the proof of $\ca\dot F^s_{p,\,q}(\rn)\subset \dot M^s_{p,\,q}(\rn)$,
we know that $\vec g\in\wz \bd^{s,\,s,\,0}(u)$.
By Theorem \ref{t2.1}, we have
\begin{eqnarray*}
\|u\|_{\dot N^s_{p,\,q}(\rn)}&&\ls\lf(\sum_{j\in\zz}\|g_j\|^q_{L^p(\rn)}\r)^{1/q}\\
&&\ls \lf(\sum_{j\in\zz} 2^{j sq}\lf\|
\sup_{\phi\in\ca}|\phi_{2^{-j}}\ast u|\r\|_{L^p(\rn)}^q\r)^{1/q}\ls
\|u\|_{\ca\dot B^s_{p,\,q}(\rn)},
\end{eqnarray*}
which  implies that $\ca\dot B^s_{p,\,q}(\rn)\subset \dot N^s_{p,\,q}(\rn)$.

Finally, we prove the above claim that  $u\in L^1_\loc(\rn)$.
If $p=\fz$, since 
$$\ca\dot B^s_{\fz,\,q}(\rn)\subset \ca\dot B^s_{\fz,\,\fz}(\rn)
=\ca\dot F^s_{\fz,\,\fz}(\rn)=\dot M^{s,\,\fz}(\rn)\subset L^1_\loc(\rn)$$ by   \cite[Corollary 1.2]{kyz09},
then $u\in  L^1_\loc(\rn)$.
For  $p\in(n/(n+s),\,\fz)$, let  $\vz\in\cs(\rn)$ satisfy  $\int_\rn\vz(z)\,dz=1$.
Then
 $ \vz_{2^{-k}}\ast u\to u$ in $\cs'(\rn)$ and hence
 $$u=\vz\ast u+\sum_{k=0}^\fz(\vz_{2^{-k-1}}\ast u-\vz_{2^{-k}}\ast u)$$ in $\cs'(\rn)$.
Observe that for all $x\in\rn$ and $k\in\zz_+$,
\begin{eqnarray*}
 |\vz_{2^{-k-1}}\ast u(x)-\vz_{2^{-k}}\ast u(x)|&&\ls
\sup_{\phi\in\ca}|\phi_{2^{-k}}\ast u(x)|.
\end{eqnarray*}
If  $p\in[1,\,\fz)$, then
 $$\sum_{k=0}^\fz\|\vz_{2^{-k-1}}\ast u-\vz_{2^{-k}}\ast u \|_{L^p(\rn)}
\ls \sum_{k=0}^\fz 2^{-ks}\|u\|_{\ca \dot B^s_{p,\,q}(\rn)}\ls\|u\|_{\ca \dot B^s_{p,\,q}(\rn)},$$
which implies that
$\sum_{k=0}^\fz(\vz_{2^{-k-1}}\ast u-\vz_{2^{-k}}\ast u)$ converges in $L^p(\rn)$.
Observing that $\vz\ast u$ is a continuous function, we know that
$\vz\ast u+\sum_{k=0}^\fz(\vz_{2^{-k-1}}\ast u-\vz_{2^{-k}}\ast u)\in L^1_\loc(\rn)$,
which implies that
$u$ is an element of $\cs'(\rn)$
induced by a function in $L^1_\loc(\rn)$. In this sense, we say that $u\in L^1_\loc(\rn)$.
For $p\in(n/(n+s),\,1)$, it is easy to see that
for all $\phi\in\ca$, $k\in\zz$, $x\in\rn$ and $y\in B(x,\,2^{-k})$,
the function $\wz\phi(z)\equiv\phi(z+2^k(x-y))$ for all $z\in\rn$
is a  constant multiple of an element of $\ca$ with the constant independent of $x, y$ and $k$.
Notice that $\phi_{2^{-k}}\ast u(x)=\wz\phi_{2^{-k}}\ast u(y)$.
Then for all $k\in\zz$ and $x\in\rn$,
\begin{eqnarray*}
 \sup_{\phi\in\ca}|\phi_{2^{-k}}\ast u(x)|
&&=\lf(\bint_{B(x,\,2^{-k})}\sup_{\phi\in\ca}|\phi_{2^{-k}}\ast u(x)|^p\,dy\r)^{1/p}\\
&&\ls \lf(\bint_{B(x,\,2^{-k})}\sup_{\phi\in\ca}|\phi_{2^{-k}}\ast u(y)|^p\,dy\r)^{1/p}\ls
2^{kn/p}\lf\|\sup_{\phi\in\ca}|\phi_{2^{-k}}\ast u|\r\|_{L^p(\rn)}
\end{eqnarray*}
and hence
\begin{eqnarray*}
 \lf\|\sup_{\phi\in\ca}|\phi_{2^{-k}}\ast u|\r\|_{L^1(\rn)}&&\ls2^{k(1-p)n/p}
 \lf\|\sup_{\phi\in\ca}|\phi_{2^{-k}}\ast u|^p\r\|_{L^1(\rn)}
\lf\|\sup_{\phi\in\ca}|\phi_{2^{-k}}\ast u|\r\|^{1-p}_{L^p(\rn)}\\
&&\ls2^{kn(1/p-1)}
\lf\|\sup_{\phi\in\ca}|\phi_{2^{-k}}\ast u|\r\|_{L^p(\rn)},
\end{eqnarray*}
which together with $p>n/(n+s)$ implies that
$$\sum_{k=0}^\fz\|\vz_{2^{-k-1}}\ast u-\vz_{2^{-k}}\ast u \|_{L^1(\rn)}
\ls \sum_{k=0}^\fz 2^{-k(n+s-n /p)}\|u\|_{\ca \dot B^s_{p,\,q}(\rn)}\ls\|u\|_{\ca \dot B^s_{p,\,q}(\rn)}.$$
From this and an argument similar to the case $p\in[1,\,\fz)$, it follows that
$u\in L^1_\loc(\rn)$. This shows the above claim and finishes the proof of Theorem \ref{t3.2}(iv) and hence
of  Theorem \ref{t3.2}.
\end{proof}

\section{Besov and Triebel-Lizorkin spaces on RD-spaces}\label{s4}

Let $(\cx,\,d,\,\mu)$ be an {\it RD-space} throughout the whole section.
We extend Theorems \ref{t3.1} and \ref{t3.2} to
the Besov and Triebel-Lizorkin spaces on $\cx$; see Theorem \ref{t4.1}.
We also establish an equivalence of $\dot M^s_{p,\,p}(\cx) $
and the Besov space $\dot \cb^s_p(\cx) $  considered by Bourdon and Pajot \cite{bp03};
see Proposition \ref{p4.1}.

We begin with the definition of  the homogeneous (grand) Besov and Triebel-Lizorkin spaces
on RD-spaces. To this end, we first recall the spaces of test functions on RD-spaces; see \cite{hmy08}.
For our convenience, in what follows, 
for any $x,$ $y\in\cx$ and $r>0$, we always set $V(x,
y)\equiv\mu(B(x, d(x,y)))$ and $V_r(x)\equiv\mu(B(x, r))$. It is
easy to see that $V(x,y)\sim V(y,x)$ for all $x,\,y\in\cx$.
Moreover, if $\mu(\cx)<\fz$, then $\diam \cx<\fz$ and hence without loss of generality,
we may always assume that $\diam\cx=2^{-k_0}$ for some $k_0\in\zz$.

\begin{defn}\label{d4.1}\rm
Let $x_1\in\cx$, $r\in(0, \fz)$, $\bz\in(0, 1]$ and $\gz\in(0,
\fz)$. A function $\vz$ on $\cx$ is said to be in the {\it space
$\cg(x_1, r, \bz, \gz)$} if there exists a nonnegative constant
$C$ such that

(i) $|\vz(x)|\le C\frac{1}{V_r(x_1)+V(x_1, x)}
\lf(\frac{r}{r+d(x_1, x)}\r)^\gz$ for all $x\in\cx$;

(ii) $|\vz(x)-\vz(y)| \le C\lf(\frac{d(x, y)}{r+d(x_1, x)}\r)^\bz
\frac{1}{V_r(x_1)+V(x_1, x)} \lf(\frac{r}{r+d(x_1, x)}\r)^\gz$
for all $x$, $y\in\cx$ satisfying that $d(x, y)\le(r+d(x_1, x))/2$.

Moreover, for any $\vz\in\cg(x_1, r, \bz,\gz)$, its {\it norm} is defined
by $$\|\vz\|_{\cg(x_1,\, r,\, \bz,\,\gz)} \equiv\inf\{C:\  (i) \mbox{
and } (ii) \mbox{ hold}\}.$$
\end{defn}

Throughout this section, we fix $x_1\in \cx $ and let $\cg(\bz,\gz)
\equiv\cg(x_1,1,\bz,\gz).$ Then
$\cg(\bz, \gz)$ is a Banach space.
We also let
$\ocg(\bz, \gz)\equiv\lf\{f\in\cg(\bz, \gz):\,\int_\cx f(x)\,d\mu(x)=0\r\}.$
Denote by $(\cg(\bz, \gz))'$ and $(\ocg(\bz, \gz))'$
the {\it dual spaces} of $\cg(\bz, \gz)$ and $\ocg(\bz, \gz)$, respectively.
Obviously, $(\ocg(\bz, \gz))'=(\cg(\bz, \gz))'/\cc$.

Let $\ez\in(0, 1]$ and 
$\bz$, $\gz\in(0, \ez)$. 
Define  $\cg_0^\ez(\bz, \gz)$ as the
{\it completion of the set $\cg(\ez, \ez)$ in the space $\cg(\bz, \gz)$},
and for $\vz\in\cg_0^\ez(\bz, \gz)$, define
$\|\vz\|_{\cg_0^\ez(\bz, \gz)}\equiv\|\vz\|_{\cg(\bz, \gz)}$. 
Then, it is easy to see that $\cg_0^\ez(\bz, \gz)$ is a Banach space. 
Similarly, we define  $\ocg_0^\ez(\bz,\gz)$.  Let
$(\cg_0^\ez(\bz, \gz))'$ and $(\ocg_0^\ez(\bz, \gz))'$ be the {\it dual
spaces} of $\cg_0^\ez(\bz, \gz)$ and $\ocg_0^\ez(\bz, \gz)$,
respectively. Obviously,
$(\ocg^\ez_0(\bz,\,\gz))'=(\cg^\ez_0(\bz,\,\gz))'/\cc$.

Now we recall the notion of approximations of the identity on
RD-spaces, which were first introduced in \cite{hmy08}.

\begin{defn}\label{d4.2}\rm
Let $\ez_1\in(0,\, 1]$ and assume that $\mu(\cx)=\fz$. A sequence $\{S_k\}_{k\in\zz}$ of bounded
linear integral operators on $L^2(\cx)$ is called an {\it approximation
of the identity of order $\ez_1$ with bounded support} (for short, $\ez_1$-$\aoti$ {\it with bounded support}),
if there exist positive constants $C_3$ and $C_4$ such that
for all $k\in\zz$ and all $x$, $x'$, $y$ and $y'\in\cx$, $S_k(x,
y)$, the integral kernel of $S_k$ is a measurable function from
$\cx\times\cx$ into $\cc$ satisfying
\begin{enumerate}
\item[(i)] $S_k(x, y)=0$ if $d(x, y)>C_42^{-k}$
and $|S_k(x, y)|\le C_3\frac{1}{V_{2^{-k}}(x)+V_{2^{-k}}(y)};$
\vspace{-0.2cm}
\item[(ii)] $|S_k(x, y)-S_k(x', y)|
\le C_3 2^{k \ez_1}d(x, x')^{\ez_1}
\frac{1}{V_{2^{-k}}(x)+V_{2^{-k}}(y)}$ for $d(x, x')\le\max\{C_4,
1\}2^{1-k}$; \vspace{-0.5cm}
\item[(iii)] Property (ii) holds with $x$ and $y$ interchanged;
\vspace{-0.2cm}
\item[(iv)] $|[S_k(x, y)-S_k(x, y')]-[S_k(x', y)-S_k(x', y')]|
\le C_3 2^{2k\ez_1}\frac{[d(x, x')]^{\ez_1}[d(y, y')]^{\ez_1}}
{V_{2^{-k}}(x)+V_{2^{-k}}(y)}$ for $d(x, x')\le\max\{C_4,
1\}2^{1-k}$ and $d(y, y')\le\max\{C_4, 1\}2^{1-k}$; \vspace{-0.2cm}
\item[(v)] $\int_\cx S_k(x, z)\, d\mu(z)
=1=\int_\cx S_k(z, y)\, d\mu(z)$.
\end{enumerate}
\end{defn}

\begin{rem}\label{r4.1}\rm
It was proved in \cite[Theorem~2.6]{hmy08} that there always exists a
$1$-$\aoti$ with bounded support on RD-spaces.
\end{rem}

Recall the notion of homogeneous Triebel-Lizorkin
spaces  in \cite{hmy08} as follows.

\begin{defn}\label{d4.3}\rm
Let $\ez\in (0,1)$,
 $s\in(0,\,\ez)$ and $p\in(n/(n+\ez),\,\fz]$.
Let $\bz,\ \gz\in(0,\,\ez)$ such that
$
\bz\in(s,\,\ez)\ {\rm and}\ \gz\in(\max\{s-\kz/p,\ n/p-n,\,0\},\,\ez).
$
Assume that $\mu(\cx)=\fz$ and  $\{S_k\}_{k\in\zz}$ is an  $\ez$-$\aoti$ with bounded support
as in Definition \ref{d4.2}.
For $k\in\zz$, set $D_k\equiv S_k-S_{k-1}$.

(i)  Let  $q\in(n/(n+\ez),\,\fz].$ The  {\it homogeneous
Triebel-Lizorkin space} $\dot F^s_{p,\,q}(\cx)$ is defined to be the set of all
$f\in(\ocg^\ez_0(\bz,\gz))'$ such that$ \|f\|_{\dot F^s_{p,\,q}(\cx)}<\fz$,
where when $p\in(n/(n+\ez),\,\fz)$,
\begin{equation}\label{e4.1}
\|f\|_{\dot F^s_{p,\,q}(\cx)}\equiv\lf\|\lf\{\sum^\fz_{k=-\fz}
2^{ksq}|D_k(f)|^q\r\}^{1/q}\r\|_{L^p(\cx)}
\end{equation}
 with the usual
modification made when $q=\fz$,
while when $p=\fz$,
\begin{equation}\label{e4.2}
\|f\|_{\dot F^s_{\fz,\,q}(\cx)}\equiv\sup_{x\in\cx}\sup_{\ell\in\zz}\lf(\bint_{B(x,\,2^{-\ell})}\sum_{k\ge\ell}2^{ksq}
 |D_k(f)(y)|^q\,d\mu(y)\r)^{1/q}
\end{equation}
with the usual modification made when $q=\fz$.

(ii)  Let  $q\in(0,\,\fz].$ The {\it homogeneous
Besov space} $\dot B^s_{p,\,q}(\cx)$ is defined to be the set of all
$f\in(\ocg^\ez_0(\bz,\gz))'$ such that
\begin{equation}\label{e4.3}
\|f\|_{\dot B^s_{p,\,q}(\cx)}\equiv\lf\{\sum^\fz_{k=-\fz}
2^{ksq} \lf\|D_k(f) \r\|^q_{L^p(\cx)}\r\}^{1/q}<\fz
\end{equation}
 with the usual
modification made when $q=\fz$.
\end{defn}

\begin{rem}\label{r4.2}\rm
(i)  As shown in \cite{yz09}, the definition of $\dot F^s_{p,\,q}(\cx)$ is independent
of the choices of $\ez$, $\bz$, $\gz$ and the approximation of the
identity as in Definition \ref{d4.2}.

(ii) By $(\ocg^\ez_0(\bz,\,\gz))'=(\cg^\ez_0(\bz,\,\gz))'/\cc$, if we
replace  $(\ocg^\ez_0(\bz,\,\gz))'$ with   $(\cg^\ez_0(\bz,\,\gz))'/\cc$ or similarly with 
 $(\cg^\ez_0(\bz,\,\gz))'$ in Definition \ref{d4.3}, 
then we obtain new Besov and Triebel-Lizorkin spaces, 
which modulo constants are equivalent to the original Besov and 
Triebel-Lizorkin spaces respectively. So we can replace $(\ocg^\ez_0(\bz,\,\gz))'$
with $(\cg^\ez_0(\bz,\,\gz))'/\cc$ or $(\cg^\ez_0(\bz,\,\gz))'$ in
the Definition \ref{d4.3} if need be, in what follows.
\end{rem}

To define grand Besov and Triebel-Lizorkin spaces,
 we introduce the class of test functions.
Motivated by \cite{kyz09},
when $\mu(\cx)=\fz$,  for all $x\in\cx$ and $k\in\zz$,  let
\begin{equation}\label{e4.4} \ca_k(x)\equiv \{\phi\in\ocg(1,\,2):\
\|\phi\|_{\ocg(x,\,2^{-k},\,1,\,2)}\le1\};
\end{equation}
when $\mu(\cx)=2^{-k_0}$, for all $x\in\cx$ and $k\ge k_0$,  let
$\ca_k(x)$ be as in \eqref{e4.4}, and for $k<k_0$, let $\ca_k(x)\equiv\{0\}$.
Set $\ca\equiv\{\ca_k(x)\}_{x\in\cx,\,k\in\zz}$.
Moreover,  we also introduce the class of test functions with bounded support.
For all $x\in\cx$ and $k\in\zz$,
let
\begin{equation}\label{e4.5}
\wz\ca_k(x)\equiv \{\phi\in \ca_k(x):\ \supp\phi\subset B(x,\,2^{-k})\}.
\end{equation}
Set $\wz\ca\equiv\{\wz\ca_k(x)\}_{x\in\cx,\,k\in\zz}$.

\begin{defn}\label{d4.4}\rm
Let $s\in(0,\,1]$, $p,\,q\in(0,\,\fz]$ and $\ca$ be as above. 

(i)   The {\it homogeneous
grand Triebel-Lizorkin space} $\ca\dot F^s_{p,\,q}(\cx)$
is defined to be the set of all
$f\in(\cg(1,\,2))'$ that satisfy
$\|f\|_{\ca\dot F^s_{p,\,q}(\cx)}<\fz$,
where  $\|f\|_{\ca\dot F^s_{p,\,q}(\cx)}$ is defined as  $\|f\|_{\dot F^s_{p,\,q}(\cx)}$ via replacing
$|D_k(f)|$ in \eqref{e4.1} and \eqref{e4.2} by $\sup_{\vz\in\ca_k}|\langle f,\,\vz\rangle|$.

(ii)  The {\it homogeneous
grand Besov space} $\ca\dot B^s_{p,\,q}(\cx)$
is defined to be the set of all
$f\in(\cg(1,\,2))'$ that satisfy
$\|f\|_{\ca\dot F^s_{p,\,q}(\cx)}<\fz$,
where  $\|f\|_{\ca\dot B^s_{p,\,q}(\cx)}$ is defined as  $\|f\|_{\dot B^s_{p,\,q}(\cx)}$ via replacing
$|D_k(f)|$ in \eqref{e4.3} by $\sup_{\vz\in\ca_k}|\langle f,\,\vz\rangle|$.

Define the {\it spaces} $\wz\ca\dot F^s_{p,\,q}(\cx)$ and $\wz\ca\dot B^s_{p,\,q}(\cx)$ as
 $\ca\dot F^s_{p,\,q}(\cx)$ and $ \ca\dot B^s_{p,\,q}(\cx)$ via replacing $\ca$ by  $\wz\ca$ as in  \eqref{e4.5}.
\end{defn}

The main result of this section is as follows. 

\begin{thm}\label{t4.1}
(i) Assume that $\mu(\cx)=\fz$. If $s\in(0,\,1)$ and
$p,\,q\in(n/(n+s),\,\fz]$, then  $ \dot F^s_{p,\,q}(\cx)=\dot M^s_{p,\,q}(\cx)$.

(ii) Assume that $\mu(\cx)=\fz$. If $s\in(0,\,1)$, $p \in(n/(n+s),\,\fz]$ and $q\in(0,\,\fz]$,
then $ \dot B^s_{p,\,q}(\cx)= \dot N^s_{p,\,q}(\cx)$.

(iii) If $s\in(0,\,1]$ and $p,\,q\in(n/(n+s),\,\fz]$, then
 $\ca\dot F^s_{p,\,q}(\cx)=\wz\ca\dot F^s_{p,\,q}(\cx) =\dot M^s_{p,\,q}(\cx)$.

(iv) If $s\in(0,\,1]$, $p \in(n/(n+s),\,\fz]$ and $q\in(0,\,\fz]$,
then $\ca\dot B^s_{p,\,q}(\cx)=\wz\ca\dot B^s_{p,\,q}(\cx) = \dot N^s_{p,\,q}(\cx)$.

\end{thm}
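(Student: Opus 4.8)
The plan is to transplant the proofs of Theorems~\ref{t3.1} and \ref{t3.2} to the RD-space setting. The first step is to record the RD-space analogues of Lemmas~\ref{l2.1}--\ref{l2.3}: the proof of Lemma~\ref{l2.1} uses only averaging over balls and the nonemptiness of the annulus $B(x,2^{-k+2})\setminus B(x,2^{-k+1})$, guaranteed by the reverse doubling in \eqref{e1.2}, so it carries over once the doubling of $\mu$ is used to compare $\mu(B(x,2^{-k}))$ with $\mu(B(x,2^{-k+2}))$; Lemma~\ref{l2.2} is the Haj\l asz--Sobolev--Poincar\'e inequality, valid on any doubling space by \cite{h03} (with $n$ the upper exponent in \eqref{e1.2}); and Lemma~\ref{l2.3} then follows by repeating its proof. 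Granting these, together with the $L^p$-boundedness ($p>1$) and the Fefferman--Stein vector-valued inequality for the Hardy--Littlewood maximal operator $\cm$ on RD-spaces, the inclusions $\dot M^s_{p,\,q}(\cx)\subset\ca\dot F^s_{p,\,q}(\cx)$ and $\dot N^s_{p,\,q}(\cx)\subset\ca\dot B^s_{p,\,q}(\cx)$ follow line by line as in the proof of Theorem~\ref{t3.2}: for $\phi\in\ca_k(x)$ the vanishing moment gives $\langle u,\phi\rangle=\int_\cx\phi(y)[u(y)-u_{B(x,\,2^{-k})}]\,d\mu(y)$, and the decay $(2^{-k}/(2^{-k}+d(x,\cdot)))^2$ built into \eqref{e4.4} (playing the role of the Schwartz decay, the exponent $2$ being more than enough for $s\in(0,1]$) reduces the estimate to sums of averages over dilated balls, handled by the Poincar\'e lemmas and $\cm$. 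Since $\wz\ca_k(x)\subset\ca_k(x)$ gives $\|f\|_{\wz\ca\dot F^s_{p,\,q}(\cx)}\le\|f\|_{\ca\dot F^s_{p,\,q}(\cx)}$ (and the $\dot B$ analogue), this yields $\dot M^s_{p,\,q}(\cx)\subset\ca\dot F^s_{p,\,q}(\cx)\subset\wz\ca\dot F^s_{p,\,q}(\cx)$ and likewise for $\dot N$.

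For the reverse inclusions, one first shows that every $u\in\wz\ca\dot F^s_{p,\,q}(\cx)$ or $\wz\ca\dot B^s_{p,\,q}(\cx)$ is represented by a function in $L^1_\loc(\cx)$. Fix a $1$-$\aoti$ with bounded support $\{S_k\}_{k\in\zz}$ (Remark~\ref{r4.1}) and set $D_k\equiv S_k-S_{k-1}$, so that $u=S_0u+\sum_{k\ge1}D_ku$ in $(\cg(1,\,2))'$. Each $D_k(x,\cdot)$ (vanishing integral, support in a ball of radius $\sim2^{-k}$, correct size and regularity) is a uniformly bounded multiple of an element of $\wz\ca_{k-N_0}(x)$ for a fixed $N_0\in\nn$, so, with $g_k(x)\equiv2^{ks}\sup_{\phi\in\wz\ca_k(x)}|\langle u,\phi\rangle|$, one has $|D_ku|\ls2^{-ks}g_{k-N_0}$, and $\|\{g_k\}_k\|_{L^p(\cx,\,\ell^q)}=\|u\|_{\wz\ca\dot F^s_{p,\,q}(\cx)}$ by definition when $p<\fz$ (with the corresponding sup-average identity when $p=\fz$). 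On a fixed ball $B_0$: for $p\ge1$, H\"older gives $\|D_ku\|_{L^1(B_0)}\ls2^{-ks}\|u\|_{\wz\ca\dot F^s_{p,\,q}(\cx)}$; for $p<1$ one uses $g_k(x)\ls g_{k-1}(z)$ for $z\in B(x,2^{-k})$ (a normalized test function about $x$ being comparable to one about a nearby point) together with the local lower bound $\mu(B(x,2^{-k}))\gs2^{-kn}$ for $x\in B_0$ from \eqref{e1.2}, obtaining $\|D_ku\|_{L^1(B_0)}\ls2^{(n(1/p-1)-s)k}\|u\|_{\wz\ca\dot F^s_{p,\,q}(\cx)}$; in both cases the series in $k$ converges because $p>n/(n+s)$, so $u\in L^1_\loc(\cx)$. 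When $\mu(\cx)<\fz$ one argues the same way, all scales being bounded below by $\diam\cx$.

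Knowing $u\in L^1_\loc(\cx)$, we have $S_ku\to u$ $\mu$-a.e., so for $\mu$-a.e.\ $x,y$, choosing $k_0$ with $2^{-k_0-1}\le d(x,y)<2^{-k_0}$ and telescoping,
$$|u(x)-u(y)|\le|S_{k_0}u(x)-S_{k_0}u(y)|+\sum_{k\ge k_0}\bigl(|D_{k+1}u(x)|+|D_{k+1}u(y)|\bigr).$$
By (v) of Definition~\ref{d4.2}, $S_{k_0}(x,\cdot)-S_{k_0}(y,\cdot)$ has vanishing integral, is supported in $B(x,C2^{-k_0})$, and, by (ii) and (iv) of Definition~\ref{d4.2} with $d(x,y)\sim2^{-k_0}$, satisfies the size and regularity bounds of a test function at scale $2^{-k_0}$; hence it is a uniformly bounded multiple of an element of $\wz\ca_{k_0-N}(x)$ for a fixed $N\in\nn$. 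Therefore $|u(x)-u(y)|\ls\sum_{k\ge k_0-N}2^{-ks}[g_k(x)+g_k(y)]$, i.e.\ a fixed multiple of $\{g_k\}_{k\in\zz}$ lies in $\wz\bd^{s,\,s,\,N}(u)$. Applying Theorem~\ref{t2.1}, and Theorem~\ref{t2.2} together with Remark~\ref{r2.1} (RD-spaces being doubling) in the Triebel-Lizorkin case $p=\fz$, we get $\|u\|_{\dot M^s_{p,\,q}(\cx)}\ls\|\{g_k\}_k\|_{L^p(\cx,\,\ell^q)}=\|u\|_{\wz\ca\dot F^s_{p,\,q}(\cx)}$, so $\wz\ca\dot F^s_{p,\,q}(\cx)\subset\dot M^s_{p,\,q}(\cx)$; the Besov case is identical with $\ell^q(L^p(\cx))$ in place of $L^p(\cx,\ell^q)$. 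Together with the first paragraph, this proves (iii) and (iv).

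For (i) and (ii), where $\mu(\cx)=\fz$, it remains to identify $\dot F^s_{p,\,q}(\cx)$ with $\wz\ca\dot F^s_{p,\,q}(\cx)$ and $\dot B^s_{p,\,q}(\cx)$ with $\wz\ca\dot B^s_{p,\,q}(\cx)$. The inclusion $\wz\ca\dot F^s_{p,\,q}(\cx)\subset\dot F^s_{p,\,q}(\cx)$ is immediate from $|D_ku|\ls\sup_{\phi\in\wz\ca_{k-N_0}(\cdot)}|\langle u,\phi\rangle|$ (the index shift by $N_0$ being harmless) and Remark~\ref{r4.2}; the converse is obtained exactly as the hard direction of Theorem~\ref{t3.1}. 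Namely, by the Calder\'on reproducing formula on RD-spaces \cite{hmy08,yz09} one has $u=\sum_{k\in\zz}\wz D_kD_ku$, so for $\phi\in\wz\ca_j(x)$ one writes $\langle u,\phi\rangle=\sum_k\langle D_ku,\wz D_k^\ast\phi\rangle$, estimates the kernel of $\wz D_k^\ast\phi$ to obtain an almost-diagonal bound $\sup_{\phi\in\wz\ca_j(x)}|\langle u,\phi\rangle|\ls\sum_k a_{jk}[\cm(|D_ku|^r)(x)]^{1/r}$ with $r<\min\{1,p,q\}$ and rapidly off-diagonal-decaying $\{a_{jk}\}$, and then concludes via the vector-valued maximal inequality and the boundedness of almost diagonal operators on the relevant sequence spaces. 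Combining with (iii) and (iv) gives (i) and (ii). I expect the main obstacles to be, first, verifying that Lemmas~\ref{l2.1}--\ref{l2.3} and the Fefferman--Stein-based estimates in the proof of Theorem~\ref{t3.2} genuinely transfer, the only delicate point being the bookkeeping between the upper and lower exponents in \eqref{e1.2}; and second, carrying out the Calder\'on-reproducing-formula argument for $\dot F^s_{p,\,q}(\cx)\subset\wz\ca\dot F^s_{p,\,q}(\cx)$, which repeats the almost-diagonal machinery behind Theorem~\ref{t3.1} in the RD-space framework.
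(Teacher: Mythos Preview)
Your proposal is correct and follows essentially the same approach as the paper: transplant the proofs of Theorems~\ref{t3.1} and \ref{t3.2} to the RD-space setting using the RD-space analogues of Lemmas~\ref{l2.1}--\ref{l2.3} (the paper points to \cite[Lemma 4.1]{kyz09} for the Lemma~\ref{l2.2} analogue), the $1$-$\aoti$ of Remark~\ref{r4.1} in place of the Schwartz mollifier, and the discrete Calder\'on reproducing formula from \cite{hmy08} for the $\dot F=\ca\dot F$ step. The paper's own proof is in fact only a sketch listing exactly these ingredients, so your more detailed roadmap (in particular the circular chain $\dot M\subset\ca\dot F\subset\wz\ca\dot F\subset\dot M$ and the explicit $L^1_\loc$ argument) is a faithful expansion of it.
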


\begin{proof}
The proof of Theorem \ref{t4.1} uses the ideas from
Theorems \ref{t3.1} and \ref{t3.2},
and also some from \cite[Theorems 1.4 and 5.1]{kyz09}.
We only point out that all the tools to prove Theorem \ref{t4.1} are available.
The details are omitted.

Assume that $\mu(\cx)=\fz$.
Then the result $\ca\dot F^s_{p,\,q}(\cx)= \dot F^s_{p,\,q}(\cx)$ for $p<\fz$ is given in \cite[Theorem 1.4]{kyz09},
whose proof used the discrete Calder\'on
reproducing formula established in \cite{hmy08}.
The proofs of
$\ca\dot F^s_{\fz,\,q}(\cx)= \dot F^s_{\fz,\,q}(\cx)$ and
$\ca\dot B^s_{p,\,q}(\cx)= \dot B^s_{p,\,q}(\cx)$
can be done by using the discrete Calder\'on
reproducing formula and an argument
similar to that used in the proofs of Theorem \ref{t3.1} and
\cite[Theorem 1.4]{kyz09}.

The results $\ca\dot F^s_{p,\,q}(\cx)= \dot M^s_{p,\,q}(\cx)$
and  $\ca\dot B^s_{p,\,q}(\cx)= \dot N^s_{p,\,q}(\cx)$
can be proved similarly to the proof of Theorem \ref{t3.2}.
Here we point out that the variants of Lemmas \ref{l2.1} through \ref{l2.3} still
hold in the current setting.
In fact, a variant of Lemma \ref{l2.2} is given in \cite[Lemma 4.1]{kyz09}, and
variants of Lemmas \ref{l2.3} and \ref{l2.1} can be proved by using the same ideas as
those used in the proof of \cite[Lemma 4.1]{kyz09}.
Applying these technical lemmas, via an argument as
the proofs of  (iii) and (iv) of Theorem \ref{t3.2},
we then obtain $\wz\ca\dot F^s_{p,\,q}(\cx)= \dot M^s_{p,\,q}(\cx)$
and  $\wz\ca\dot B^s_{p,\,q}(\cx)= \dot N^s_{p,\,q}(\cx)$, which completes the proof of Theorem \ref{t4.1}.
\end{proof}

\begin{rem}\label{r4.3}\rm
Assume that $\mu(\cx)=2^{-k_0}$  for some $k_0\in\zz$.
Based on Theorem \ref{t4.1},
we simply write $\ca\dot B^s_{p,\,q}(\cx)$ as $\dot B^s_{p,\,q}(\cx)$
and $\ca\dot F^s_{p,\,q}(\cx)$ as $\dot F^s_{p,\,q}(\cx)$.
This is also reasonable in the sense that $\ca\dot B^s_{p,\,p}(\cx)$
and $\ca\dot F^s_{p,\,p}(\cx)$  coincide
with $\dot \cb^s_{p}(\cx)$ when $s\in(0,\,1)$ and $p\in(1,\,\fz)$; see Proposition \ref{t4.1} below.
It is still unknown in this case if $ \dot B^s_{p,\,q}(\cx)$ and $  \dot F^s_{p,\,q}(\cx)$
can be characterized via radial Littlewood-Paley functions.
\end{rem}

Finally, we establish an equivalence between   $\dot M^s_{p,\,p}(\cx)$
and the Besov space $\dot\cb^s_p(\cx)$ considered by Bourdon and Pajot \cite{bp03} as follows.
For the characterizations of Besov and Triebel-Lizorkin spaces via differences on metric measure spaces, see
\cite{my09,gks09}.
\begin{defn}\label{d4.5}\rm
 Let $s\in(0,\,\fz)$ and $p \in[1,\,\fz)$.
Denote by $\dot \cb ^s_{p}(\cx)$ the {\it space of all $u\in L^p_\loc(\cx)$ satisfying that}
$$\|u\|_{\dot\cb ^s_{p}(\cx)}\equiv\lf(\int_\cx\int_\cx
\frac{|u(x)-u(y)|^p}{[d(x,\,y)]^{sp}V(x,\,y)}\,d\mu(y)\,d\mu(x)\r)^{1/p}<\fz.$$
\end{defn}

\begin{prop}\label{p4.1} Let $s\in(0,\,\fz)$ and $p\in[1,\,\fz)$. Then
$\dot\cb ^s_{p}(\cx)=\dot M^s_{p,\,p}(\cx)$.
\end{prop}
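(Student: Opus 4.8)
The plan is to prove the two inclusions separately, passing in both through the discretized quantity
\begin{equation*}
[u]_{s,\,p}^p\equiv\sum_{k\in\zz}2^{ksp}\int_\cx\dbint_{B(x,\,2^{-k})}|u(x)-u(y)|^p\,d\mu(y)\,d\mu(x),
\end{equation*}
which I would first show satisfies $[u]_{s,\,p}^p\sim\|u\|_{\dot\cb^s_p(\cx)}^p$. To see this, decompose the inner integral in $\|u\|_{\dot\cb^s_p(\cx)}^p$ over the dyadic shells $B(x,\,2^{-k})\setminus B(x,\,2^{-k-1})$; on such a shell $d(x,\,y)\sim2^{-k}$ and, by the doubling property, $V(x,\,y)\sim V_{2^{-k}}(x)$, so the $k$-th contribution equals $\sim2^{ksp}\frac{1}{V_{2^{-k}}(x)}\int_{B(x,\,2^{-k})\setminus B(x,\,2^{-k-1})}|u(x)-u(y)|^p\,d\mu(y)$. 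Bounding this crudely by $2^{ksp}\dbint_{B(x,\,2^{-k})}|u(x)-u(y)|^p\,d\mu(y)$ gives $\|u\|_{\dot\cb^s_p}^p\ls[u]_{s,\,p}^p$, while the reverse bound follows by re-summing over shells and using the reverse doubling inequality $V_{2^{-i}}(x)\gs2^{(k-i)\kz}V_{2^{-k}}(x)$ for $i\le k$, so that $\sum_{i\le k}2^{isp}\frac{1}{V_{2^{-i}}(x)}\ls2^{ksp}\frac{1}{V_{2^{-k}}(x)}$ (here $sp+\kz>0$ makes the geometric series converge).

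Next I would prove $\dot M^s_{p,\,p}(\cx)\subset\dot\cb^s_p(\cx)$ with $\|u\|_{\dot\cb^s_p}\ls\|u\|_{\dot M^s_{p,\,p}}$. Since $\|\cdot\|_{\ell^\fz}\le\|\cdot\|_{\ell^p}$ pointwise, $\dot M^s_{p,\,p}(\cx)\subset\dot M^s_{p,\,\fz}(\cx)=\dot M^{s,\,p}(\cx)$, so the RD-space analogue of the Sobolev--Poincar\'e inequality (Lemma \ref{l2.2}) shows $u\in L^p_\loc(\cx)$, whence the $\dot\cb^s_p$-norm is meaningful. Now take $\vec g\in\bd^s(u)$; for a.e.\ $x,\,y$ with $2^{-k-1}\le d(x,\,y)<2^{-k}$ one has $|u(x)-u(y)|^p\le2^{p-1}[d(x,\,y)]^{sp}([g_k(x)]^p+[g_k(y)]^p)$. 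Inserting this into the shell decomposition, using $V(x,\,y)\sim V_{2^{-k}}(x)\sim V_{2^{-k}}(y)$, and applying Fubini to the $g_k(y)$-term (the set $\{x:d(x,\,y)<2^{-k}\}$ has measure $\sim V_{2^{-k}}(y)$), yields $\|u\|_{\dot\cb^s_p}^p\ls\sum_{k\in\zz}\int_\cx[g_k(x)]^p\,d\mu(x)=\|\vec g\|_{L^p(\cx,\,\ell^p)}^p$; taking the infimum over $\vec g$ finishes this direction.

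For the converse $\dot\cb^s_p(\cx)\subset\dot M^s_{p,\,p}(\cx)$, let $u\in\dot\cb^s_p(\cx)\subset L^p_\loc(\cx)$, so a.e.\ point is a $p$-Lebesgue point of $u$. Fix $\ez\in(0,\,s)$ and set
\begin{equation*}
g_k(x)\equiv\lf(\sum_{j\ge k-3}2^{-(j-k)\ez p}2^{jsp}\dbint_{B(x,\,2^{-j+3})}|u(x)-u(y)|^p\,d\mu(y)\r)^{1/p}.
\end{equation*}
Using the telescoping identity $u(x)=\lim_{j\to\fz}u_{B(x,\,2^{-j})}$ valid at Lebesgue points, the elementary bound $\dbint_B|v-v_B|^p\,d\mu\le2^p\dbint_B|v(x)-v|^p\,d\mu$ (for every ball $B$ and every point $x$), and the doubling property to pass between concentric and nearby balls, one checks that for $x,\,y$ outside a null set with $2^{-k-1}\le d(x,\,y)<2^{-k}$,
\begin{equation*}
|u(x)-u(y)|\ls2^{-ks}[g_k(x)+g_k(y)],
\end{equation*}
so a fixed constant multiple of $\{g_k\}_{k\in\zz}$ lies in $\bd^s(u)$ (alternatively, one lands in some $\wz\bd^{s,\,\ez,\,N}(u)$ and invokes Theorem \ref{t2.1}). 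The condition $\ez<s$ is exactly what lets the telescoping tails $\sum_{j\ge k}2^{-(j-k)s}(\cdots)$ be controlled by $g_k$ via H\"older's inequality.

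Finally, interchanging the order of summation and using the first step,
\begin{equation*}
\sum_{k\in\zz}\int_\cx[g_k(x)]^p\,d\mu(x)=\lf(\sum_{m\ge-3}2^{-m\ez p}\r)\sum_{j\in\zz}2^{jsp}\int_\cx\dbint_{B(x,\,2^{-j+3})}|u(x)-u(y)|^p\,d\mu(y)\,d\mu(x)\sim[u]_{s,\,p}^p\sim\|u\|_{\dot\cb^s_p}^p,
\end{equation*}
where the convergence of $\sum_m2^{-m\ez p}$ is precisely what the weight $2^{-(j-k)\ez p}$ buys; hence $\|u\|_{\dot M^s_{p,\,p}}\ls\|u\|_{\dot\cb^s_p}$, which together with the preceding two steps completes the proof. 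The main obstacle is this last construction: the averaging radii (slightly dilated, here $2^{-j+3}$) and the weight $2^{-(j-k)\ez p}$ must be tuned so that simultaneously the dyadic chaining between $x$ and $y$ at scale $2^{-k}$ is dominated by $g_k(x)+g_k(y)$ \emph{and} the resulting $L^p(\cx,\,\ell^p)$-norm collapses back to $\|u\|_{\dot\cb^s_p}$ rather than diverging from over-counting; reconciling these two requirements is what forces the choice $\ez\in(0,\,s)$ and makes reverse doubling enter.
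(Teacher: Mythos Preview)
Your proof is correct, and both directions are handled soundly; in particular your discretized intermediary $[u]_{s,p}$ is equivalent to $\|u\|_{\dot\cb^s_p}$ by exactly the shell/reverse-doubling argument you sketch, and your weighted gradient $g_k$ with $\ez\in(0,s)$ does land in $\bd^s(u)$ (up to a constant) while having $L^p(\cx,\ell^p)$-norm controlled by $[u]_{s,p}$.

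The paper takes a different construction for the harder inclusion $\dot\cb^s_p\subset\dot M^s_{p,p}$. Rather than a first-difference gradient centred at the evaluation point with a built-in weighted sum over scales, the paper defines a \emph{single-scale, annulus-based second-difference} object
\[
h_j(x)\equiv\lf(\bint_{B(x,\,2^{-j-1})}\int_{B(x,\,2^{-j+K_0+1})\setminus B(x,\,2^{-j+1})}\frac{|u(y)-u(z)|^p}{[d(y,z)]^{sp}V(y,z)}\,d\mu(z)\,d\mu(y)\r)^{1/p},
\]
shows via telescoping that $\vec h\in\wz\bd^{s,\,s,\,1}(u)$, and then invokes Theorem~\ref{t2.1} to pass to $\bd^s(u)$; the $L^p(\ell^p)$-bound follows by Fubini directly against the Besov double integral, with no intermediate $[u]_{s,p}$ needed. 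Your approach packages the scale-summation into $g_k$ itself and pays for it with the auxiliary exponent~$\ez$ and the equivalence $[u]_{s,p}\sim\|u\|_{\dot\cb^s_p}$; the paper keeps $h_j$ single-scale and defers the summation to the abstract machinery of Theorem~\ref{t2.1}. Both routes work for the full range $s\in(0,\infty)$, $p\in[1,\infty)$, and neither is materially shorter. One minor point you leave implicit: when $\mu(\cx)<\infty$ you should set $g_k\equiv0$ for $2^{-k}\gtrsim\diam\cx$, which the paper handles explicitly.
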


\begin{proof}
Let $u\in \dot\cb^s_p(\cx)$.
We need to find a fractional $s$-Haj\l asz gradient of $u$.
If $s\in(0,\,1]$, we can use the grand maximal function as in the proof of Theorem \ref{t1.2},
namely, use the equivalence $\dot M^s_{p,\,p}(\cx)=\ca\dot F^s_{p,\,p}(\cx)$ given in Theorem \ref{t4.1}.
But, for $s>1$,  we need to find another fractional $s$-Haj\l asz gradient of $u$.
Indeed, we deal with both cases in a uniform way
by taking another fractional $s$-Haj\l asz gradient.

By the reverse doubling property of the RD-space,
there exists $K_0\in\nn$ and $K_0>1$ such that for all $x\in\cx$ and  $0<r<2\diam\cx/2^{K_0}$,
$\mu(B(x,\,2^{K_0}r))\ge2\mu(B(x,\,r)).$
Notice that $u\in L^p_\loc(\cx)\subset L^1_\loc(\cx)$. Thus, for all the
Lebesgue points $x$ of $u$  and
 all $k\in\zz$ such that $2^{-k+K_0}<\diam\cx$, by the H\"older inequality, we have that
\begin{eqnarray}\label{e4.6}
&&|u(x)-u_{B(x,\,2^{-k})}|\\
&&\quad\le\sum_{j\ge k}|u_{B(x,\,2^{-j })}-u_{B(x,\,2^{-j-1})}|\nonumber \\
&&\quad\ls \sum_{j\ge k}\bint_{B(x,\,2^{-j})}|u(y)-u_{B(x,\,2^{-j+K_0+1})\setminus B(x,\,2^{-j+1})}|\,d\mu(y)\nonumber \\
&&\quad\ls \sum_{j\ge k}\bint_{B(x,\,2^{-j})}\bint_{B(x,\,2^{-j+K_0+1})\setminus B(x,\,2^{-j+1})} |u(y)-u(z)|\,d\mu(z)\,d\mu(y)\nonumber \\
&&\quad\ls \sum_{j\ge k}2^{-js}\nonumber\\
&&\quad\quad\times\lf\{\bint_{B(x,\,2^{-j})}\int_{B(x,\,2^{-j+K_0+1})\setminus B(x,\,2^{-j+1})}  \frac{|u(y)-u(z)|^p}{[d(y,\,z)]^{sp}V(y,\,z)}\,d\mu(z)\,d\mu(y)\r\}^{1/p}.\nonumber
\end{eqnarray}

If $\mu(\cx)=\fz$, for all $j\in\zz$ and $x\in\cx$, we let $$
h_j(x)\equiv \lf\{\bint_{B(x,\,2^{-j-1})}\int_{B(x,\,2^{-j+K_0+1})\setminus B(x,\,2^{-j+1})} \frac{|u(y)-u(z)|^p}{[d(y,\,z)]^{sp}V(y,\,z)}\,d\mu(z)\,d\mu(y)\r\}^{1/p},$$
and $\vec h\equiv\{h_j\}_{j\in\zz}$.
Then  $\vec h\in\wz\bd^{s,\,s,\,1}(u)$.
Let $y$ also be a Lebesgue point of $u$ with $2^{-k-1}\le d(x,\,y)<2^{-k}$.
Now 
$$|u(x)-u(y)|\le |u(x)-u_{B(x,\,2^{-k})}|+|u(y)-u_{B(x,\,2^{-k})}|.$$
Observe that by \eqref{e4.6} and an argument similar to it, we have
\begin{eqnarray*}
 |u(y)-u_{B(x,\,2^{-k })}|&&\le  |u(y)-u_{B(y,\,2^{-k+1})}|+|u_{B(y,\,2^{-k+1})}-u_{B(x,\,2^{-k })}|\\
&&\ls  \sum_{j\ge k-1}2^{-js}h_j(y) +\bint_{B(y,\,2^{-k+1})}|u(z)-u_{B(y,\,2^{-k+1})}|\,dz\\
&&\ls \sum_{j\ge k-1}2^{-js}h_j(y).
\end{eqnarray*}
So
\begin{equation}\label{e4.7}
 |u(x)-u(y)|\ls \sum_{j\ge k-1}2^{ -j s}[h_j(x)+h_j(y)].
\end{equation}
Thus, by Theorem \ref{t2.1}, we obtain that
\begin{eqnarray*}
&&\|u\|_{\dot M^s_{p,\,p}(\cx)}\\
&&\quad
 \sim\int_\cx \sum_{j\in\zz}\bint_{B(x,\,2^{-j})} \int_{B(x,\,2^{-j+K_0+1})\setminus B(x,\,2^{-j+1})} \frac{|u(y)-u(z)|^p}{[d(y,\,z)]^{sp}V(y,\,z)}\,d\mu(z) \,d\mu(y)\,d\mu(x)\\
&&\quad \ls  \sum_{j\in\zz}\int_\cx\int_{B(y,\,2^{-j+K_0+2})\setminus B(y,\,2^{-j})}
\frac{|u(y)-u(z)|^p}{[d(y,\,z)]^{sp}V(y,\,z)}\,d\mu(z) \,d\mu(y)\\
&&\quad \ls   \int_\cx\int_\cx
\frac{|u(y)-u(z)|^p}{[d(y,\,z)]^{sp}V(y,\,z)}\,d\mu(z) \,d\mu(y)\sim \|u\|_{\dot\cb^s_p(\cx)}.
\end{eqnarray*}

Now assume that $\mu(\cx)=2^{-k_0}$ for some $k_0\in\zz$.
Let $x,\,y $ be a pair of Lebesgue points of $u$ and assume that
$2^{-k-1}\le d(x,\,y)<2^{-k}$ for some $k\ge k_0$.
If $k\ge k_0+K_0+1$, then  \eqref{e4.7} still holds.
If $k_0\le k< k_0+K_0+1$, then
\begin{eqnarray}\label{e4.8}
 |u(x)-u(y)|
&& \ls |u(x)-u_{B(x,\,2^{-k_0-K_0-3})}|+|u(y)-u_{B(y,\,2^{-k_0-K_0-3})}|\\
&& \quad+|u_{B(x,\,2^{-k_0-K_0-3})}-u_{B(y,\,2^{-k_0-K_0-3})}|.\nonumber
\end{eqnarray}
Since, for all $z\in B(x,\,2^{-k_0-K_0-3})$
and $w\in B(y,\,2^{-k_0-K_0-3})$, $2^{-k}\gs d(z,\,w)\ge 2^{-k_0-K_0-3}$, we have that
\begin{eqnarray}\label{e4.9}
&&| u_{B(x,\,2^{-k_0-K_0-3})}-u_{B(y,\,2^{-k_0-K_0-3})}|\\
&&\quad\ls \bint_{B(x,\,2^{-k_0-K_0-3})}\bint_{B(y,\,2^{-k_0-K_0-3})}|u(z)-u(w)|\,d\mu(z)\,d\mu(w)\nonumber \\
&&\quad\ls 2^{-ks}\lf\{\bint_{B(x,\,2^{-k_0-K_0-3})}\int_{B(y,\,2^{-k_0-K_0-3})}
\frac{|u(z)-u(w)|^p}{[d(z,\,w)]^{sp}V(z,\,w)}\,d\mu(z)\,d\mu(w)\r\}^{1/p}\nonumber\\
&&\quad\ls 2^{-ks}[\mu(\cx)]^{1/p}\|u\|_{\dot \cb^s_p(\cx)}.\nonumber
\end{eqnarray}
If we take $h_k\equiv [\mu(\cx)]^{1/p}\|u\|_{\dot \cb^s_p(\cx)}$ for all $k_0-1\le k< k_0+K_0$
and $h_k\equiv0$ for $k<k_0-1$,
then by \eqref{e4.6}, \eqref{e4.8} and \eqref{e4.9}, we know that \eqref{e4.7} still holds and hence
   $ \vec h\equiv\{h_k\}_{k\in\zz}\in\wz\bd^{s,\,s,\,1}(u)$.
Moreover, similarly to the case $\mu(\cx)=\fz$, we have $u\in\dot M^s_{p,\,p}(\cx)$ and
$$
\|u\|_{\dot M^s_{p,\,p}(\cx)}\ls\|\vec h\|_{L^p(\cx,\,\ell^p)}\ls\|u\|_{\dot \cb^s_p(\cx)}.$$

Conversely, let $u\in \dot M^s_{p,\,p}(\cx)$. We then have that for all $x\in\cx$,
\begin{eqnarray*}
\int_\cx
\frac{|u(x)-u(y)|^p}{[d(x,\,y)]^{sp}V(x,\,y)}\,d\mu(y)
&&\ls\sum_{k=k_0}^\fz \int_{B(x,\,2^{-j})\setminus B(x,\,2^{-j-1})}
\frac{|u(x)-u(y)|^p}{[d(x,\,y)]^{sp}V(x,\,y)}\,d\mu(y)\\
&&\ls\sum_{k=k_0}^\fz \bint_{B(x,\,2^{-j}) } [g_j(x)+g_j(y)]^p\,d\mu(y),
\end{eqnarray*}
which implies that $u\in L^p_\loc(\cx)$ and
\begin{eqnarray*}
 \|u\|^p_{\dot \cb^s_p(\cx)}&&\ls\int_\cx \sum_{k=k_0}^\fz \bint_{B(x,\,2^{-j}) }
[g_j(x)+g_j(y)]^p\,d\mu(y) \,d\mu(x)\\
&&\ls\int_\cx \sum_{k=k_0}^\fz
[g_j(y) ]^p  \,d\mu(y)
\ls \|u\|^p_{\dot M ^s_{p,\,p}(\cx)}.
\end{eqnarray*}
This finishes the proof of Proposition \ref{p4.1}.
\end{proof}

\section{Quasiconformal and quasisymmetric mappings}\label{s5}

The aim of this section is to prove Theorem \ref{t1.3}, Theorem \ref{t1.4}
and their following extension; also see Corollary \ref{c5.2}.

\begin{thm}\label{t5.1}
Let $\cx$ and $\cy$ be Ahlfors $n_1$-regular and $n_2$-regular spaces with $n_1,\,n_2\in(0,\,\fz)$, respectively.
Let  $f$ be a quasisymmetric mapping from $\cx$ onto $\cy$.
For $s_i\in(0,\,n_i)$ with $i=1,\,2$, if $n_1/s_1=n_2/s_2$, then
  $f$  induces an equivalence between $\dot M^{s_1}_{n_1/s_1,\,n_1/s_1}(\cx)$
and $\dot M^{s_2}_{n_2/s_2,\,n_2/s_2}(\cy)$, and hence between $\dot \cb^{s_1}_{n_1/s_1}(\cx)$
and $\dot \cb^{s_2}_{n_2/s_2}(\cy)$.
\end{thm}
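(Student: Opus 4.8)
The plan is to route everything through the Besov-type difference seminorm, at which point the claim becomes a scale-invariant reformulation of the quasisymmetric invariance of the Besov spaces of Bourdon and Pajot.

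Put $p\equiv n_1/s_1=n_2/s_2$. Since $s_i\in(0,n_i)$ we have $p\in(1,\fz)$, so Proposition \ref{p4.1} applies and gives $\dot M^{s_i}_{p,\,p}(\cdot)=\dot\cb^{s_i}_p(\cdot)$ with comparable norms, $i=1,2$. By Ahlfors $n_i$-regularity, $V(x,y)\sim[d(x,y)]^{n_i}$, so, using $s_ip=n_i$,
\begin{align*}
\|u\|^p_{\dot\cb^{s_1}_p(\cx)}
&\sim\int_\cx\int_\cx\frac{|u(x)-u(y)|^p}{[d_\cx(x,y)]^{s_1p}V(x,y)}\,d\mu_\cx(y)\,d\mu_\cx(x)\\
&\sim\int_\cx\int_\cx\frac{|u(x)-u(y)|^p}{[d_\cx(x,y)]^{2n_1}}\,d\mu_\cx(y)\,d\mu_\cx(x)=:\ce_\cx(u),
\end{align*}
and likewise $\|v\|^p_{\dot\cb^{s_2}_p(\cy)}\sim\ce_\cy(v)$ with $\ce_\cy$ defined by the analogous double integral over $\cy$. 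Hence it suffices to show $\ce_\cy(u\circ f^{-1})\ls\ce_\cx(u)$; applying the same bound to $f^{-1}$, which is again quasisymmetric, yields the reverse inequality, hence that $f$ induces an equivalence of the $\dot M$ spaces, and the asserted equivalence of the $\dot\cb$ spaces then follows from Proposition \ref{p4.1}. After these reductions, the inequality $\ce_\cy(u\circ f^{-1})\ls\ce_\cx(u)$ is exactly the quasisymmetric invariance of the Bourdon--Pajot Besov space, so one may simply invoke \cite{bp03}; below I indicate a self-contained argument.

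For that argument, decompose $\cy\times\cy$ minus the diagonal into the dyadic shells $\{(y,y'):2^{-m-1}\le d_\cy(y,y')<2^{-m}\}$, $m\in\zz$, and, inside each shell, into a bounded-overlap family of product balls $B_\cy(z,2^{-m})\times B_\cy(z',2^{-m})$ with $d_\cy(z,z')\sim2^{-m}$, indexed by a maximal $2^{-m}$-separated net. Using Ahlfors $n_2$-regularity together with a standard telescoping (which replaces $|u\circ f^{-1}(y)-u\circ f^{-1}(y')|$ by the corresponding ball averages, the oscillation remainder summing over all scales back into $\ce_\cy(u\circ f^{-1})$) reduces $\ce_\cy(u\circ f^{-1})$ to the combinatorial sum $\sum_m\sum_{z\sim z'}|(u\circ f^{-1})_{B_\cy(z,2^{-m})}-(u\circ f^{-1})_{B_\cy(z',2^{-m})}|^p$, and the same reduction applies to $\ce_\cx(u)$ on $\cx$. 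The point is that $f$ transports the $\cx$-decomposition into the $\cy$-one with bounded distortion: since $f$ is $\eta$-quasisymmetric, the image $f(B_\cx(x,r))$ is \emph{roundish} --- trapped between $B_\cy(f(x),\rho/L)$ and $B_\cy(f(x),\rho)$ with $L=L(\eta)$, hence essentially one $\cy$-generation --- nesting of balls is preserved, and neighbouring same-generation $\cx$-balls are carried to neighbouring $\cy$-balls of comparable radius (again by quasisymmetry applied to ratios of distances); Ahlfors $n_1$- and $n_2$-regularity then convert these radius comparisons into the measure comparisons needed to match the two combinatorial sums up to multiplicities bounded in terms of $\eta$, $n_1$, $n_2$ and the regularity constants.

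The heart of the matter is this transport step, and I expect it to be the main obstacle. A quasisymmetric mapping has no pointwise Jacobian --- indeed it need not even be absolutely continuous --- so the two energies cannot be compared by any change of variables; the comparison must be carried out entirely at the level of balls, and one must check that the quasisymmetric distortion does not accumulate over the infinitely many scales $m\in\zz$. This is precisely why the argument uses the scale-free ratio form of quasisymmetry rather than any Hölder estimate, and it is where the conformal relation $s_ip=n_i$ is indispensable: it is this relation that makes the powers of radii produced by $n_1$- and $n_2$-Ahlfors regularity cancel against the powers $[d(x,y)]^{-s_ip}$, so that the transported combinatorial energy is genuinely comparable --- not merely up to a diverging constant --- to the original one.
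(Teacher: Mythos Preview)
Your route via the Besov energy $\ce_\cx$ and Proposition~\ref{p4.1} is a legitimate alternative to the paper's proof, which instead constructs an explicit fractional Haj\l asz gradient $\vec h$ of $u\circ f$ (along the lines of \eqref{e5.2}) and bounds its $L^{n_1/s_1}(\cx,\ell^{n_1/s_1})$-norm using quasisymmetry and the counting estimate \eqref{e5.5}. Both approaches ultimately exploit the same geometric facts about quasisymmetric images of balls.

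However, your argument has a genuine gap: you have not explained why $u\circ f^{-1}$ is a well-defined measurable function, and in this generality it need not be. A quasisymmetric map between Ahlfors regular spaces of different dimensions is \emph{never} absolutely continuous (and need not be even when $n_1=n_2$), so $f$ may send a null set to a set of positive measure. If $u$ is only defined outside a $\mu_\cx$-null set $E$, then $u\circ f^{-1}$ can fail to be defined on $f(E)$, which may have positive $\mu_\cy$-measure; the energy $\ce_\cy(u\circ f^{-1})$ is then not even meaningful, and neither is the telescoping to ball averages that you invoke. The paper handles this precisely by Lemma~\ref{l5.2}: one replaces $u\in\dot\cb^{s_1}_{n_1/s_1}(\cx)$ by its Lebesgue-limit representative $\wz u$, whose exceptional set $\cx\setminus F$ has Hausdorff dimension zero, and then uses that $f$ (being quasisymmetric between uniformly perfect spaces) is H\"older continuous on bounded sets, so $f(\cx\setminus F)$ still has Hausdorff dimension zero and hence $\mu_\cy$-measure zero. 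Without this step the composition is not defined on a set large enough to carry the argument.

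A second, related issue is your transport step. You correctly note that no change of variables is available, yet your sketch compares the combinatorial sum built from the averages $(u\circ f^{-1})_{B_\cy(z,2^{-m})}$ with the sum built from $u_{B_\cx(\cdot,\cdot)}$. These two averages are \emph{not} comparable without absolute continuity: $(u\circ f^{-1})_{B_\cy}$ is an integral of $u\circ f^{-1}$ against $d\mu_\cy$, not an integral of $u$ against $d\mu_\cx$ over $f^{-1}(B_\cy)$. The paper sidesteps this by never forming averages of the composition; it works with averages of $u$ (and its gradient) over images $f(B(x,2^{-j}))$, which are genuine $\mu_\cy$-integrals, and links them to the pointwise value $\wz u(f(x))$ via the Lebesgue representative. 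If you want to push your combinatorial approach through, you must either adopt the same device or work entirely with quantities that do not require evaluating $u\circ f^{-1}$ at points --- which is essentially what Bourdon--Pajot do via their $\ell^p$-cohomology and hyperbolic filling, a substantially different argument from the one you sketch.
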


Since  the volume derivative of a quasisymmetric mapping need not satisfy the reverse H\"older inequality in this generality,
 we cannot extend  Theorem \ref{t5.1} to the full range $q\in(0,\,\fz]$.
Furthermore, we do not claim that $f$ acts as a composition operator but merely that 
every $u\in\dot\cb^{s_2}_{n_2/s_2}(\cy)$ has a representative $\wz u$ so that 
$\wz u\circ f\in\dot \cb^{s_1}_{n_1/s_1}(\cx)$ with a norm bound, and similarly for $f^{-1}$.
Indeed, $u\circ f$ need not even be measurable in this generality.

Now we begin with the proof of Theorem \ref{t1.3}.
To this end, we need the following properties of  quasiconformal mappings on $\rn$.

First recall that a homeomorphism on $\rn$
 is quasiconformal according to the metric definition  if and only if
it is quasiconformal according to the analytic definition, and if and only if it is quasisymmetric; see,
for example, \cite{hk98,k09}.
Moreover, denote by $\cb_r(\cx)$ the {\it class of functions $w$ on the metric measure space $\cx$
satisfying the reverse H\"older inequality of
order $r\in(1,\,\fz]$}: there exists a positive constant $C$ such that for all balls $B\subset\cx$,
$$\lf\{\bint_B [w(x)]^r\,d\mu(x)\r\}^{1/r}\le C\bint_Bw(x)\,d\mu(x).$$
Then, a celebrated result of Gehring \cite{g73} says that
\begin{prop}\label{p5.1}
 Let $n\ge2$  and $f:\ \rn\to \rn$ be a quasiconformal mapping. Then
there exists $r\in(1,\,\fz]$ such that $|J_f|\in\cb_r(\rn)$.
\end{prop}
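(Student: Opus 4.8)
The plan is to obtain Proposition~\ref{p5.1} as an instance of the classical higher-integrability theorem of Gehring \cite{g73}; let me sketch the standard argument. First I would pass to the analytic picture: as recalled above, a quasiconformal $f\colon\rn\to\rn$ lies in $W^{1,\,n}_\loc(\rn,\,\rn)$, its distributional differential satisfies the distortion inequality $|Df(x)|^n\le K|J_f(x)|$ for a.e.\ $x$ (here $|Df(x)|$ denotes the operator norm), and $f$ satisfies Lusin's condition $(N)$. Combining this with the trivial pointwise bound $|J_f(x)|=|\det Df(x)|\le|Df(x)|^n$ gives $|J_f|\sim|Df|^n$ a.e., so it suffices to prove a reverse H\"older inequality for $w\equiv|Df|^n$.

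The core step is to establish a \emph{weak} reverse H\"older inequality: for some $q\in(0,\,1)$ and $C>0$, and every ball $B\subset\rn$,
$$\bint_B w(x)\,dx\le C\lf(\bint_{2B}[w(x)]^{q}\,dx\r)^{1/q}.$$
To do so I would fix $B=B(x_0,\,r)$ and an exponent $p\in(n-1,\,n)$, so $q\equiv p/n\in(0,\,1)$. For a.e.\ $\rho\in(r,\,2r)$ the restriction of $f$ to the sphere $\partial B(x_0,\,\rho)$ is Sobolev, and the Morrey--Sobolev embedding on the $(n-1)$-sphere (which applies precisely because $p>n-1$) gives, after rescaling to the unit sphere, $\diam f(\partial B(x_0,\,\rho))\ls\rho^{1-(n-1)/p}(\int_{\partial B(x_0,\,\rho)}|Df|^p\,d\sigma)^{1/p}$. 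Since $f$ is a homeomorphism of $\rn$, the set $f(\partial B(x_0,\,\rho))$ is the topological boundary of the bounded domain $f(B(x_0,\,\rho))$, so $\diam f(B(x_0,\,\rho))=\diam f(\partial B(x_0,\,\rho))$ and $|f(B(x_0,\,\rho))|\ls[\diam f(\partial B(x_0,\,\rho))]^n$; while the area formula together with condition $(N)$ and the distortion inequality give $\int_{B(x_0,\,r)}w\,dx\le K\int_{B(x_0,\,r)}|J_f|\,dx=K|f(B(x_0,\,r))|\le K|f(B(x_0,\,\rho))|$. Raising the resulting chain of inequalities to the power $q=p/n$, integrating in $\rho$ over $(r,\,2r)$, and using polar coordinates to recognise that $\int_r^{2r}\int_{\partial B(x_0,\,\rho)}|Df|^p\,d\sigma\,d\rho\le\int_{2B}|Df|^p\,dx=\int_{2B}w^{q}\,dx$, one arrives at the displayed weak reverse H\"older inequality after dividing by the appropriate power of $r$.

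Finally I would invoke the self-improving property of weak reverse H\"older inequalities, i.e.\ Gehring's lemma: a nonnegative $w\in L^1_\loc(\rn)$ satisfying the above weak reverse H\"older inequality on all balls satisfies, for some $r\in(1,\,\fz)$ and $C'>0$ and all balls $B$, $(\bint_B[w(x)]^{r}\,dx)^{1/r}\le C'\bint_B w(x)\,dx$; transferring this to $|J_f|\sim w$ yields $|J_f|\in\cb_r(\rn)$. The hard part is the weak reverse H\"older inequality of the middle step, and specifically the gain of an exponent strictly below $1$ on its right-hand side: this is exactly where quasiconformality must be used, through the interplay of the distortion inequality $|Df|^n\le K|J_f|$, the change-of-variables identity $\int_B|J_f|=|f(B)|$, the homeomorphism fact $\diam f(B)=\diam f(\partial B)$, and the Morrey embedding on spheres with the exponent $p$ chosen precisely in $(n-1,\,n)$; none of these alone suffices. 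Once this is in hand, the last step is the classical real-variable Gehring lemma and needs nothing specific to quasiconformal maps.
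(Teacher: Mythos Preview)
Your sketch is correct and follows the classical route due to Gehring. Note, however, that the paper does not supply its own proof of this proposition: it is stated as ``a celebrated result of Gehring \cite{g73}'' and simply cited. What you have written is essentially an outline of Gehring's original argument --- the analytic characterization giving $|J_f|\sim|Df|^n$, the weak reverse H\"older inequality obtained via the Morrey--Sobolev embedding on spheres and the change-of-variables formula, and finally the self-improvement lemma --- so your proposal is in full agreement with the source the paper defers to.
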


For a quasiconformal mapping $f:\, \rn\to\rn$, we  set
\begin{equation}\label{e5.1}
 R_f\equiv\sup\{r\in(1,\,\fz]:\, |J_f|\in\cb_r(\rn)\}.
\end{equation}
Notice that $|J_f|\in\cb_r(\rn)$ implies that $|J_f|$ is a weight in the sense of Muckenhoupt.
Then,  we have  the following conclusions;
see, for example, \cite[Remark 6.1]{k09}.

\begin{prop}\label{p5.2}
Let $n\ge2$  and $f:\ \rn\to \rn$ be a quasiconformal mapping.

(i) For any measurable set $E\subset \rn$, $|f(E)|=\int_E|J_f(x)|\,dx$;  moreover, $|E|=0$ if and only if $|f(E)|=0$.

(ii) $f$ induces a doubling measure on $\rn$, namely,
there exists a positive constant $C$ such that  for every ball $B\subset\rn$,
 $ |f(2B )|\le C |f(B)|.$

(iii) There exist positive constants $C$ and $\az\in(0,\,1]$ such that for every ball $B\subset\rn$,
and every measurable set $E\subset B$,
$$\frac{|f(E)|}{|f(B)|}\le C\lf(\frac{|E|}{|B|}\r)^\az.$$
\end{prop}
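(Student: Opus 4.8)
The plan is to obtain all three assertions from the classical analytic theory of quasiconformal mappings on $\rn$ together with Proposition \ref{p5.1}. Recall that for $n\ge2$ a quasiconformal mapping $f$ of $\rn$ lies in $W^{1,\,n}_{\loc}(\rn)$, is differentiable almost everywhere, and satisfies Lusin's condition (N); since $f^{-1}$ is again quasiconformal, $f$ also satisfies condition (N$^{-1}$). Consequently the change of variables formula is available for the homeomorphism $f$: for every measurable $E\subset\rn$ one has $|f(E)|=\int_E|J_f(x)|\,dx$, which is the first part of (i). For the second part, observe that $|J_f(x)|>0$ for almost every $x$ (apply the analytic inequality $|Df|^n\le KJ(\cdot,f)$ to both $f$ and $f^{-1}$), so $\int_E|J_f|=0$ forces $|E|=0$, while conversely $|E|=0$ gives $|f(E)|=\int_E|J_f|=0$; equivalently one may invoke (N) and (N$^{-1}$) directly.

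For (ii) and (iii) I would set $w\equiv|J_f|$ and invoke Proposition \ref{p5.1}, which furnishes $r\in(1,\,\fz]$ with $w\in\cb_r(\rn)$; by part (i), $w(B)=|f(B)|$ for every ball $B$. For (iii), given a ball $B$ and a measurable set $E\subset B$, H\"older's inequality (using $E\subset B$) together with the reverse H\"older inequality for $w$ yields
$$
w(E)\le\lf(\int_B w^r\r)^{1/r}|E|^{1-1/r}
\le C|B|^{1/r-1}\,w(B)\,|E|^{1-1/r},
$$
so that $|f(E)|/|f(B)|=w(E)/w(B)\ls(|E|/|B|)^{1-1/r}$, which is (iii) with $\az\equiv1-1/r\in(0,\,1]$ (interpreting $1-1/r$ as $1$ when $r=\fz$). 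For (ii), I would use that, as noted after Proposition \ref{p5.1}, the membership $w\in\cb_r(\rn)$ makes $w$ a Muckenhoupt weight and hence doubling, whence $|f(2B)|=w(2B)\le Cw(B)=|f(B)|$ for every ball $B$. Alternatively, (ii) follows directly from the quasisymmetry of $f$ via the bounded eccentricity of images of balls: $|f(B(x,\,2r))|\sim[\diam f(B(x,\,2r))]^n\ls[\diam f(B(x,\,r))]^n\sim|f(B(x,\,r))|$.

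The only substantial input here is the regularity theory behind part (i) — the membership $f\in W^{1,\,n}_{\loc}(\rn)$ together with the validity of conditions (N), (N$^{-1}$) and of the change of variables formula — which is classical and can be cited (see, e.g., \cite{k09}), and Proposition \ref{p5.1}, which is assumed. Granting these, parts (ii) and (iii) reduce to the routine H\"older estimate displayed above, so I do not expect any genuine obstacle; the write-up is essentially a matter of assembling these standard facts in the right order.
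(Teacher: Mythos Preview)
The paper does not actually prove Proposition \ref{p5.2}; it merely cites \cite[Remark 6.1]{k09}. Your sketch is a correct reconstruction of the standard argument---deriving (i) from the analytic regularity theory (conditions (N) and (N$^{-1}$) and the change of variables formula for $W^{1,\,n}_{\loc}$ homeomorphisms), and (ii)--(iii) from Gehring's reverse H\"older inequality (Proposition \ref{p5.1}) via the displayed H\"older estimate---and this is precisely the content behind the paper's citation.
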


We also need the following change of variable formula, 
which is deduced from the Lebesgue-Radon-Nikodym theorem and the absolute continuity of $f$ given in Proposition \ref{p5.2}(i).
\begin{lem}\label{l5.1}
Let  $n\ge 2$ and $f:\ \rn\to\rn$ be a quasiconformal mapping.
Then for all
nonnegative Borel measurable functions $u$ on $\rn$,
$$\int_\rn u(f(x))|J_f(x)|\,dx= \int_{\rn} u(y)\,dy.$$
\end{lem}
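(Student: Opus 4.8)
The plan is to establish the identity first for indicator functions, then lift it by linearity to nonnegative simple functions, and finally pass to general nonnegative Borel $u$ by monotone convergence; equivalently, the statement amounts to the fact that the image of the measure $|J_f(x)|\,dx$ under $f$ is Lebesgue measure, so one could alternatively invoke the elementary transport identity for image measures once that pushforward has been identified via Proposition \ref{p5.2}(i).

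First I would take $u=\chi_E$ with $E\subset\rn$ Borel. Since $f$ is a homeomorphism of $\rn$, the set $f^{-1}(E)$ is again Borel, and since $f$ is a bijection, $f(f^{-1}(E))=E$; hence $\chi_E\circ f=\chi_{f^{-1}(E)}$. Applying Proposition \ref{p5.2}(i) to the measurable set $f^{-1}(E)$ then gives
$$\int_\rn\chi_E(f(x))\,|J_f(x)|\,dx=\int_{f^{-1}(E)}|J_f(x)|\,dx=|f(f^{-1}(E))|=|E|=\int_\rn\chi_E(y)\,dy.$$
By linearity of the integral this extends at once to every nonnegative Borel simple function $u=\sum_{i=1}^m a_i\chi_{E_i}$, using that $u\circ f=\sum_{i=1}^m a_i\chi_{f^{-1}(E_i)}$ is again nonnegative, Borel and simple, and that $|J_f|\in L^1_\loc(\rn)$ is nonnegative and measurable, so every integral in sight is a well-defined element of $[0,\fz]$.

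For general nonnegative Borel $u$ I would choose nonnegative Borel simple functions $u_j\uparrow u$ pointwise on $\rn$. Continuity of $f$ gives $u_j\circ f\uparrow u\circ f$ pointwise, whence $(u_j\circ f)\,|J_f|\uparrow (u\circ f)\,|J_f|$ pointwise since $|J_f|\ge0$. Two applications of the monotone convergence theorem — one on each side of the identity already proved for each $u_j$ — then yield $\int_\rn u(f(x))\,|J_f(x)|\,dx=\int_\rn u(y)\,dy$, which is the assertion of Lemma \ref{l5.1}.

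There is no serious obstacle here: the whole weight of the lemma rests on Proposition \ref{p5.2}(i) (itself a consequence of the absolute continuity of quasiconformal maps), and the rest is routine. The only points deserving a line of care are (a) that preimages of Borel sets under the homeomorphism $f$ are Borel, so that $u\circ f$ is Borel measurable and Proposition \ref{p5.2}(i) is legitimately applied to $f^{-1}(E)$, and (b) that the approximating functions be taken Borel rather than merely Lebesgue measurable, so that composition with $f$ preserves measurability with no further appeal to the absolute continuity of $f^{-1}$.
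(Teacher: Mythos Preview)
Your proposal is correct and is precisely the standard way to flesh out what the paper merely asserts: the paper gives no proof beyond the remark that the lemma ``is deduced from the Lebesgue-Radon-Nikodym theorem and the absolute continuity of $f$ given in Proposition \ref{p5.2}(i).'' Your indicator--simple--monotone-convergence argument, resting on Proposition \ref{p5.2}(i) to identify the pushforward of $|J_f|\,dx$ under $f$ with Lebesgue measure, is exactly this deduction spelled out.
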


Let $f$ be a homeomorphism  between metric spaces $(\cx,\,d_\cx)$ and $(\cy,\,d_\cy)$.
For our convenience, in what follows, we
always write
$$L_f(x,\,r)\equiv\sup\{d_\cy(f(x),\,f(y)):\ d_\cx(x,\,y)\le r\}$$
and
$$\ell_f(x,\,r)\equiv\inf\{d_\cy(f(x),\,f(y)):\ d_\cx(x,\,y)\ge r\}$$
for all $x\in \cx$ and $r\in(0,\,\fz)$.

\begin{proof}[Proof of Theorem \ref{t1.3}]
Since $f^{-1}$ is  also a quasiconformal mapping,  it suffices to prove that
$f$ induces a bounded linear operator on $\dot M^{s}_{n/s,\,q}(\rn)$,
namely, if $u\in \dot M^{s}_{n/s,\,q}(\rn)$, then $u\circ f\in \dot M^{s}_{n/s,\,q}(\rn)$ and
 $\|u\circ f\|_{\dot M^{s}_{n/s,\,q}(\rn)}
 \ls\|u\|_{\dot M^{s}_{n/s,\,q}(\rn)}$.
To this end, let $u\in\dot M^s_{n/s,\,q}(\rn)$. Without loss of generality, we may assume that
$\|u\|_{\dot M^s_{n/s,\,q}(\rn)}=1$.
Let  $\vec g\in \bd^{s}(u)$
and $\|\vec g\|_{ L^{n/s}(\rn,\,\ell^q) }\le2$.
For our convenience, by abuse of notation,
we set $g_t\equiv g_k$ for all $t\in[2^{-k-1},\,2^{-k})$ and $k\in\zz$.
Moreover, since  either $J_f(x)>0$ for almost all $x\in\rn$ or $J_f(x)<0$ for almost all $x\in\rn$ (see, for example,
\cite[Remark 5.2]{k09}),
  without loss of generality, we may further assume that $J_f(x)>0$ for almost all $x\in\rn$.

Due to Theorem \ref{t2.1}, the task of the proof of Theorem \ref{t1.3} is reduced to finding a suitable
$\vec h\in \wz \bd^{s,\,s,\,N}(u\circ f)$  with
 $\|\vec h\|_{ L^{n/s}(\rn,\,\ell^q) }\ls1$ for some integer $N$.
To this end, we consider the following three cases: (i)   $q=n/s$,
(ii)  $q\in(n/s,\,\fz]$,
(iii) $q\in(0,\,n/s)$.
We pointed out that in Case (i), we only use the above basic properties of quasiconformal mappings in Propositions \ref{p5.1}
and \ref{p5.2} and Lemma \ref{l5.1};
 in Case (ii), we need the reverse H\"older inequality;
 while in Case (iii),  we apply Lemma \ref{l2.3} and
the reverse H\"older inequality, and establish
a subtle pointwise estimate via non-increasing rearrangement functions (see \eqref{e5.7} below).

  {\it Case (i)  $q=n/s$.}
In this case, by Proposition \ref{p5.2}(iii), there exists $K_0\in\nn$ such that for all $x\in\rn$ and $r\in(0,\,\fz)$,
$$|f(B(x,\,2^{K_0}r)\setminus B(x,\,2r))|\ge |f(B(x,\,2r))|.$$
Then for all $x\in\rn$ such that $f(x)$ is a Lebesgue point of $u$,
and for all $k\in\zz$, similarly to the proof of Lemma \ref{l2.1},
by Proposition \ref{p5.2}(ii),
we have that
\begin{eqnarray*}
 |u\circ f(x)-u_{f(B(x,\,2^{-k}))}|
&&\le \sum_{j\ge k}|u_{f(B(x,\,2^{-j-1}))}-u_{f(B(x,\,2^{-j}))}|\\
&&\ls \sum_{j\ge k}\bint_{f(B(x,\,2^{-j}))}|u(y)-u_{f(B(x,\,2^{-j+K_0})\setminus B(x,\,2^{-j+1}))}|\,dy\\
&&\ls \sum_{j\ge k}\bint_{f(B(x,\,2^{-j}))}\bint_{f(B(x,\,2^{-j+K_0})\setminus B(x,\,2^{-j+1}))} |u(y)-u(z)|\,dz\,dy,
\end{eqnarray*}
where, in the penultimate inequality, we used the fact that
$$|f(B(x,\,2^{-j-1}))|\sim|f(B(x,\,2^{-j}))|,$$
which is obtained by Proposition \ref{p5.2}(ii).

Since $f$ is a quasisymmetric mapping,  there exists $K_1\in\nn$
such that for all $y\in\rn$ and $j\in\zz$,
$$L_f(f^{-1}(y),\,2^{-j+K_0+1})\le 2^{K_1} \ell_f(f^{-1}(y),\,2^{-j})\le2^{K_1} L_f(f^{-1}(y),\,2^{-j}) .$$
For all $k\in\zz$, set  $\wz g_k\equiv\sum_{j=k}^{k+K_1} g_j$.
Then we know that $\{\wz g_k\}_{k\in\zz}\in\bd^{s,\,K_1,\,0}(u)$  and, moreover,
 $$\|\{\wz g_k\}_{k\in\zz}\|_{L^{n/s}(\rn,\,\ell^{n/s})}\ls
\|\vec g\|_{L^{n/s}(\rn,\,\ell^{n/s})}\ls1.$$
By abuse of notation, we write that $\wz g_t\equiv\wz g_k$ for every $t\in[2^{-k-1},\,2^{-k})$ and all $k\in\zz$.

For almost all $y\in f(B(x,\,2^{-j}))$ and $z\in f(B(x,\,2^{-j+K_0})\setminus B(x,\,2^{-j+1}))$,
since
$$\ell_f(f^{-1}(y),\,2^{-j})\le |y-z|\le L_f(f^{-1}(y),\,2^{-j+K_0+1}),$$
$$\ell_f(f^{-1}(z),\,2^{-j})\le|y-z|\le L_f(f^{-1}(z),\,2^{-j+K_0+1}) $$
and
$$|y-z|\le|y-f(x)|+|f(x)-z|\le 2L_f(x,\,2^{-j+K_0}),$$
 we have
\begin{eqnarray*}
 |u(y)-u(z)|&&\le |y-z|^s [g_{|y-z|}(y)+g_{|y-z|}(z)]\\
&&\ls [L_f(x,\,2^{-j+K_0})]^s[\wz g_{L_f(f^{-1}(y),\,2^{-j+K_0+1})}(y)+\wz g_{L_f(f^{-1}(z),\,2^{-j+K_0+1})}(z)],
\end{eqnarray*}
which further yields that
\begin{eqnarray*}
 &&|u\circ f(x)-u_{f(B(x,\,2^{-k}))}|\ls \sum_{j\ge k-K_0-1}[L_f(x,\,2^{-j })]^s\bint_{f(B(x,\,2^{-j }))}\wz g_{L_f(f^{-1}(y),\,2^{-j })}(y)\,dy.\end{eqnarray*}
For all $x\in\rn$ and all $j\in\zz$, set
\begin{equation}\label{e5.2}
h_j(x)\equiv 2^{js}[L_f(x,\,2^{-j })]^s\bint_{f(B(x,\,2^{-j }))}\wz g_{L_f(f^{-1}(y),\,2^{-j })}(y)\,dy.
\end{equation}
Then $$ |u\circ f(x)-u_{f(B(x,\,2^{-k}))}|\ls \sum_{j\ge k-K_0-1} 2^{-js}h_j(x).$$
Moreover, $\vec h$ is a constant multiple of an element of $\wz \bd^{s,\,s,\,K_0+2}(u\circ f)$.
In fact, for every pair of Lebesgue points $x,\,y\in\rn$ with $|x-y|\in[2^{-k-1},\,2^{-k})$,
we have
\begin{equation*}
 |u\circ f(x)-u\circ f(y)|
\le |u\circ f(x)-u_{f(B(x,\,2^{-k }))}|+ |u\circ f(y)-u_{f(B(x,\,2^{-k }))}|.
\end{equation*}
By Proposition \ref{p5.2}(ii) and an argument similar to the above, we also have
\begin{eqnarray*}
&&|u\circ f(y)-u_{f(B(x,\,2^{-k+1}))}|\\
&&\quad\ls  |u\circ f(y)-u_{f(B(y,\,2^{-k+1}))}|
+|u_{f(B(y,\,2^{-k+1 }))}-u_{f(B(x,\,2^{-k}))}|\\
&&\quad\ls  \sum_{j\ge k-K_0-2} 2^{-js}h_j(y) + \bint_{f(B(y,\,2^{-k+1}))}|u(z)-u_{f(B(y,\,2^{-k+1 }))}|\,dz\\
&&\quad\ls  \sum_{j\ge k-K_0-2} 2^{-js}h_j(y),
\end{eqnarray*}
and hence
\begin{equation}\label{e5.3}
 |u\circ f(x)-u\circ f(y)|\ls \sum_{j\ge k-K_0-2}2^{-js}[h_j(x)+h_j(y)],
\end{equation}
which  implies that $\vec h$ is a constant
multiple of an element of $\wz \bd^{s,\,s,\,K_0+2}(u\circ f)$.

Now we estimate $\|\vec h\|_{L^{n/s}(\rn,\,\ell^{n/s})}$.
In fact, from (ii) and (iii) of Proposition \ref{p5.2} and the fact that $f$ is quasisymmetric, it follows that
for all $x\in\rn$ and $j\in\zz$,
\begin{equation}\label{e5.4}
[L_f(x,\,2^{-j})]^n\sim |f(B(x,\,2^{-j}))|,
\end{equation}
which together with the H\"older inequality implies that
$$  h_j(x)\ls 2^{js}\lf\{\int_{f(B(x,\,2^{-j }))}[\wz g_{L_f(f^{-1}(y),\,2^{-j })}(y)]^{n/s}\,dy\r\}^{s/n}.
$$ Noticing that $y\in f(B(x,\,2^{-j}))$ implies that $x\in B(f^{-1}(y),\,2^{-j})$,
 by Proposition \ref{p5.2}(i),  we  have
\begin{eqnarray*}
 \|\vec h\|^{n/s}_{L^{n/s}(\rn,\,\ell^{n/s}) }
&& \ls \sum_{j\in\zz}2^{jn} \int_\rn  \int_{f(B(x,\,2^{-j }))}[\wz g_{L_f(f^{-1}(y),\,2^{-j })}(y)]^{n/s}\,dy\,dx\\
&& \ls \sum_{j\in\zz}2^{jn}\int_\rn[\wz g_{L_f(f^{-1}(y),\,2^{-j })}(y)]^{n/s}\lf\{\int_{B(f^{-1}(y),\,2^{-j })}\,dx\r\}\,dy\\
&& \ls \int_\rn\sum_{j\in\zz} [\wz g_{L_f(f^{-1}(y),\,2^{-j})}(y)]^{n/s} \,dy\\
&& \ls \int_\rn\sum_{k\in\zz}\lf(\sharp\lf\{j\in\zz:\,  L_f(f^{-1}(y),\,2^{-j })\in[2^{-k-1},\,2^{-k})\r\}\r)
 [\wz g_k(y)]^{n/s} \,dy,
\end{eqnarray*}
where $\sharp E $ denotes the cardinality of a set $E\subset \zz$.
Moreover, observe that  for all $k\in\zz$ and $y\in\rn$, we have
\begin{equation}\label{e5.5}
 \sharp\lf\{j\in\zz:\  L_f(f^{-1}(y),\,2^{-j})\in[2^{-k-1},\,2^{-k}) \r\}\ls 1.
\end{equation}
Indeed, if $i,\,j\in\zz$ with $i>j$,
$L_f(f^{-1}(y),\,2^{-i}),\, L_f(f^{-1}(y),\,2^{-j})\in[2^{-k-1},\,2^{-k})$,
then by \eqref{e5.4} and Proposition \ref{p5.2}(iii),
$$\frac12\le \frac {L_f(f^{-1}(y),\,2^{-i}) }
{L_f(f^{-1}(y),\,2^{-j}) }\ls \frac{|f(B(f^{-1}(y),\,2^{-i} ))|^{1/n}}{|f(B(f^{-1}(y),\,2^{-j} ))|^{1/n}}
\ls 2^{(j-i)\az/n},$$
which implies that  $i-j\le N$ for some constant N independent of $i$, $j$ and $y$,
and hence \eqref{e5.5} follows.
Then by \eqref{e5.5}, we further obtain
\begin{eqnarray*}
\|\vec h\|^{n/s}_{L^{n/s}(\rn,\,\ell^{n/s})}
 \ls \sum_{k\in\zz}\int_\rn [\wz g_k(y)]^{n/s} \,dy\ls \|\vec g\|^{n/s}_{L^{n/s}(\rn,\,\ell^{n/s})}\ls1,
\end{eqnarray*}
which implies that $\|u\circ f\|_{\dot M^s_{n/s,\,n/s}(\rn)}\ls1$.
That is, Theorem \ref{t1.3} is true for the space $\dot M^s_{n/s,\,n/s}(\rn)$.

{\it  Case (ii)  $q\in(n/s,\,\fz]$.} In this case, we still take $\vec h\equiv\{h_j\}_{j\in\zz}$
as a variant of the fractional $s$-Haj\l asz gradient of $u\circ f$,
where $h_j$ is given in \eqref{e5.2}.
Then we will control $h_j$
by a suitable maximal function   via an application of the reverse H\"older inequality satisfied by $J_f$.
In fact, by Lemma \ref{l5.1}, \eqref{e5.4} and  the H\"older inequality, we have
\begin{eqnarray*}
h_j(x)&&= \lf[\frac{|f(B(x,\,2^{-j })|}{|B(x,\,2^{-j })|} \r]^{-1+s/n}
\bint_{B(x,\,2^{-j })}\wz g_{L_f(z,\,2^{-j })}(f(z))J_f(z)\,dz \\
&&\ls\lf[\frac{|f(B(x,\,2^{-j })|}{|B(x,\,2^{-j })|} \r]^{-1+s/n}
\lf\{\bint_{B(x,\,2^{-j})} \lf[\wz g_{L_f(z,\,2^{-j })}(f(z))\r]^p[J_f(z)]^{ps/n}\,dz \r\}^{1/p}\\
&&\quad \times\lf\{\bint_{B(x,\,2^{-j})}[J_f(z)]^{p(n-s)/[n(p-1)]}\,dz \r\}^{(p-1)/p},
\end{eqnarray*}
where we take $p\in(1,\,n/s)$ to be close to $n/s$ so that $p(n-s)/n(p-1)< R_f$.
Therefore, by  the
reverse H\"older inequality given in Proposition \ref{p5.1}, and Proposition \ref{p5.2}(i), we obtain
 $$
h_j(x) \ls \lf\{\cm\lf( \lf[\wz g_{L_f(\cdot,\,2^{-j })}\circ f  \r]^p[J_f ]^{ps/n}\r)(x)\r\}^{1/p},$$
where we recall that $\cm$ denotes the Hardy-Littlewood maximal function.
Therefore,  the Fefferman-Stein vector-valued maximal inequality on $\cm$, $p<n/s<q$, \eqref{e5.5} and Lemma \ref{l5.1} yield that
\begin{eqnarray*}
\|\vec h\|^{n/s}_{L^{n/s}(\rn,\,\ell^q) }
&&\ls \int_\rn \lf(\sum_{j\in\zz}
\lf\{\cm\lf(\lf[\wz g_{L_f(\cdot,\,2^{-j })}\circ f  \r]^p[J_f ]^{ps/n}\r)(x)\r\}^{q/p}\r)^{n/(sq)}\,dx\\
&&\ls
\int_\rn \lf(\sum_{j\in\zz} \lf[\wz g_{L_f(x,\,2^{-j })}\circ f(x)\r]^q \r)^{n/(sq)}J_f (x)\,dx\\
&&\ls
\int_\rn \lf(\sum_{j\in\zz} \lf[\wz g_{j}\circ f(x)\r]^q \r)^{n/(sq)}J_f (x)\,dx\\
&&\ls
\int_\rn \lf(\sum_{j\in\zz} \lf[\wz g_j (x)\r]^q \r)^{n/(sq)} \,dx\ls1.
\end{eqnarray*}
Thus, Theorem \ref{t1.3} is true for the space $\dot M^s_{n/s,\,q}(\rn)$ with $q\in(n/s,\,\fz]$.

{\it Case (iii) $q\in(0,\,n/s)$.} In this case,  
for given $q\in(n/(n+s), n/s)$, we choose $\dz \in(0,\,1]$ such that
$$
0<\dz < \frac{nq(R_f-1)}{nR_f-sq},$$
where $R_f$ is as in \eqref{e5.1} on $\rn$.
It is easy to check that
$$\frac{n-s\dz }n\frac{q}{q-\dz }<R_f.$$
Observe that we can take $p\in(1,\,q/\dz)$ and close to $q/\dz$ such that
\begin{equation}\label{e5.6}
 \frac{n-s\dz }n\frac{p}{p-1}<R_f.
\end{equation}
We also let $\ez,\,\ez'\in(\max\{n(q-1)/q,\,0\},\,s)$ such that $\ez<\ez'$.

We now claim that there exists a measurable set $E\subset\rn$ with $|E|=0$
such that for all $x,\,y\in \rn\setminus E$ with $|x-y|\in[2^{-k-1}, 2^{-k})$,
\begin{eqnarray}\label{e5.7}
|u\circ f(x)-u\circ f(y)|
&&\ls \sum_{j\ge k} \inf_{c\in\rr}\lf(\bint_{B(f(x),\, 2L_f(x,\,2^{-j}))}|u(y)-c|^\dz \,dy\r)^{1/\dz }\\
&&\quad\quad+\sum_{j\ge k} \inf_{c\in\rr}\lf(\bint_{B(f(y),\, 2L_f(y,\,2^{-j}))}|u(y)-c|^\dz \,dy\r)^{1/\dz },\nonumber
\end{eqnarray}
where the implicit constant is independent of $x,\,y,\,k$ and $u$, but may depend on $\dz$.

Assume this claim holds for the moment.
Observe that by Proposition \ref{p5.2}(iii) and \eqref{e5.4}, there exists  $K_2\in\nn$ such that  for all $x\in\rn$ and $j\in\zz$,
$4L_f(x,\,2^{-j})\le \ell_f(x,\,2^{-j+K_2})$, and hence
$$B(f(x),\, 4L_f(x,\,2^{-j}))\subset B(f(x),\, \ell_f(x,\,2^{-j+K_2}))\subset f(B(x,\,2^{-j+K_2})).$$
Then by Lemma \ref{l2.3}, we have that
\begin{eqnarray*}
&&\sum_{j\ge k} \inf_{c\in\rr}\lf(\bint_{B(f(x),\, 2L_f(x,\,2^{-j}))}|u(y)-c|^\dz \,dy\r)^{1/\dz }\\
&&\quad\ls \sum_{j\ge k} [L_f(x,\,2^{-j})]^{\ez'}
\sum_{i\ge -3-\log L_f(x,\,2^{-j})} 2^{-i(s-\ez')} \lf\{\bint_{B(f(x),\,  L_f(x,\,2^{-j+K_2}))}[g_i(z)]^\dz \,dz\r\}^{1/\dz }.
\end{eqnarray*}
Notice that by \eqref{e5.6}, the reverse H\"older inequality given in Proposition \ref{p5.1},
the H\"older inequality,  Proposition \ref{p5.2}(i) 
and   Lemma \ref{l5.1},
we obtain that
\begin{eqnarray*}
&& \bint_{f(B(x,\,2^{-j+K_2}))}[g_i(z)]^\dz \,dz\\
&&\quad\ls \frac{|B(x,\,2^{-j+K_2}))|}{|f(B(x,\,2^{-j+K_2}))|}\bint_{ B(x,\,2^{-j+K_2}) }[g_i\circ f(y)]^\dz  J_f(y)\,dy\\
&&\quad\ls \frac{|B(x,\,2^{-j+K_2}))|}{|f(B(x,\,2^{-j+K_2}))|}
\lf\{\bint_{ B(x,\,2^{-j+K_2}) }[g_i\circ f(y)]^{p\dz} [J_f(y)]^{p\dz s/n }\,dy\r\}^{1/p}\\
&&\quad\quad\times \lf\{\bint_{ B(x,\,2^{-j+K_2}) } [J_f(y)]^{p(n -s\dz)/[n(p-1)]}\,dy\r\}^{(p-1)/p}\\
&&\quad\ls \lf[\frac{|B(x,\,2^{-j+K_2})|}{|f(B(x,\,2^{-j+K_2}))|}\r]^{\dz s/n}
\lf[\cm([g_i\circ f]^{p\dz} [J_f]^{p\dz s/n})(x)\r]^{1/p}.
\end{eqnarray*}
Thus, by \eqref{e5.4},
\begin{eqnarray*}
&&\sum_{j\ge k} \inf_{c\in\rr}\lf(\bint_{B(f(x),\, 2L_f(x,\,2^{-j}))}|u(y)-c|^\dz \,dy\r)^{1/\dz }\\
&&\quad\ls \sum_{j\ge k} [L_f(x,\,2^{-j})]^{\ez'}
\sum_{i\ge -3-\log L_f(x,\,2^{-j})} 2^{-i(s-\ez')}\lf[\frac{|B(x,\,2^{-j+K_2})|}{|f(B(x,\,2^{-j+K_2}))|}\r]^{s/n}\\
&&\quad\quad\times
\lf[\cm([g_i\circ f]^{p\dz} [J_f]^{p\dz s/n})(x)\r]^{1/(p\dz)}\\
&&\quad\ls \sum_{j\ge k} 2^{-js}
\sum_{i\ge -3-\log L_f(x,\,2^{-j})} 2^{-i(s-\ez')} [L_f(x,\,2^{-j})]^{\ez'-s}\\
&&\quad\quad\times
\lf[\cm([g_i\circ f]^{p\dz} [J_f]^{p\dz s/n})(x)\r]^{1/(p\dz)}.
\end{eqnarray*}
For all $j\in\zz$, set
\begin{eqnarray*}
h_j&&\equiv \sum_{i\ge -3-\log L_f(\cdot,\,2^{-j})} [L_f(\cdot,\,2^{-j})]^{\ez'-s}2^{-i(s-\ez')}
\lf[\cm([g_i\circ f]^{p\dz} [J_f]^{p\dz s/n})\r]^{1/(p\dz)}.
\end{eqnarray*}
By \eqref{e5.7},  we know that
$\vec h\equiv\{h_j\}_{j\in\zz}$ is a constant multiple of an element of $\wz\bd^{s,\,s,\,0}(u\circ f)$.
Moreover, by \eqref{e5.5},
we have
\begin{eqnarray*}
&&\|\vec h\|^{n/s}_{L^{n/s}(\rn,\,\ell^q) }\\
&&\quad\ls \int_\rn \Bigg(\sum_{j\in\zz}
\Bigg\{\sum_{i\ge -3-\log L_f(x,\,2^{-j})} [L_f(x,\,2^{-j})]^{\ez'-s}2^{-i(s-\ez')} \\
&&\quad\quad\times
\lf[\cm([g_i\circ f]^{p\dz} [J_f]^{p\dz s/n})(x)\r]^{1/(p\dz)}\Bigg\}^q\Bigg)^{n/(sq)}\,dx\\
&&\quad\ls \int_\rn \Bigg(\sum_{k\in\zz}
\Bigg\{\sum_{i\ge k-3}  2^{(k-i)(s-\ez')}
\lf[\cm([g_i\circ f]^{p\dz} [J_f]^{p\dz s/n})(x)\r]^{1/(p\dz)}\Bigg\}^q\Bigg)^{n/(sq)}\,dx.
\end{eqnarray*}
When $q\in(1,\,n/s)$, applying the H\"older inequality, 
we   have
\begin{eqnarray*}
\|\vec h\|^{n/s}_{L^{n/s}(\rn,\,\ell^q) } \ls \int_\rn \Bigg(\sum_{k\in\zz}
\sum_{i\ge k-3}  2^{(k-i)(s-\ez')}
\lf[\cm([g_i\circ f]^{p\dz} [J_f]^{p\dz s/n})(x) \r]^{q /(p\dz)}\Bigg)^{n/(sq)}\,dx,
\end{eqnarray*}
which, when $q\in(n/(n+s),\,1]$, still holds with $2^{(k-i)(s-\ez')}$ replaced by $2^{(k-i)(s-\ez')q}$
due to \eqref{e2.2}.
Then, by $p<q/\dz <n/(sr)$, the Fefferman-Stein vector-valued maximal inequality on $\cm$ and Lemma \ref{l5.1},
 we obtain
\begin{eqnarray*}
\|\vec h\|^{n/s}_{L^{n/s}(\rn,\,\ell^q) }&& \ls \int_\rn \lf(\sum_{i\in\zz}
\lf[\cm([g_i\circ f]^{p\dz} [J_f]^{p\dz s/n})(x) \r]^{q/(p\dz)}\r)^{n/(sq)}\,dx\\
&& \ls
\int_\rn \lf(\sum_{i\in\zz} \lf[g_{i}\circ f(x)\r]^q \r)^{n/(sq)}J_f (x)\,dx\\
&& \ls
\int_\rn \lf(\sum_{i\in\zz} \lf[ g_i (x)\r]^q \r)^{n/(sq)} \,dx\ls1,
\end{eqnarray*}
 which is  as desired.

Finally, we prove the above  claim \eqref{e5.7}.
For each ball $B$, let $m_u(B)$ be a median of $u$ on $B$, namely,
a real number such that
$$\max\lf\{|\{x\in B:\ f(x)>m_u(B)\}|,\,|\{x\in B:\ f(x)<m_u(B)\}|\r\} \le \frac{|B|}2.$$
Then, as proved by Fujii \cite[Lemma 2.2]{f90}, 
there exists a measurable set $E\subset \rn$ with $|E|=0$ such that 
$$u(z)=\lim_{|B|\to0,\,B\ni z}m_u(B)$$ for  all $z\in\rn\setminus E$.
Thus, for all $z\in\rn\setminus E$,   and every sequence $\{r_j\}_{j\ge0}$ with $r_j\to0$ as $j\to\fz$
and  $0<r_{j+1}\le r_j< N  r_{j+1}$ for some fixed constant $N$,
we have
\begin{eqnarray*}
|u(z)-m_u(B(z,\,r_0))|
&&\le \sum_{j\ge 0}|m_u(B(z,\,r_{j+1}))-m_u(B(z,\,r_j))|\\
 &&\le \sum_{j\ge 0}(|m_u(B(z,\,r_{j+1}))-c_{B(z,\,r_j)}|
+|m_u(B(z,\,r_{j}))-c_{B(z,\,r_j )}|),
\end{eqnarray*}
where $c_{B(z,\,r_j)}$ is a real number such that
\begin{equation}\label{e5.8}
 \bint_{B(z,\,   r_j )}|u(w)-c_{B(z,\,r_j)}|^\dz \,dw\le
2\inf _{c\in\rr} \bint_{B(z,\,  r_j)}|u(w)-c|^\dz \,dw.
\end{equation}

To estimate $|m_u(B(z,\,r_{j+1}))-c_{B(z,\,r_j)}|$ and $|m_u(B(z,\,r_{j}))-c_{B(z,\,r_j )}|$,
 recall that  the non-increasing rearrangement of a measurable function $v$ is defined by
$$v^\ast(t)\equiv \inf\{\az>0:\ |\{w\in\rn:\ |v(w)|>\az\}|\le t\}.$$
Then,  for every ball $B$ and number $c\in\rr$, obviously,
we can take $m_{u-c}(B)=m_u(B)-c$ as a median of $u-c$ on $B$. Then,
by \cite[Lemma 2.1]{f90},
$$  |m_u(B)-c|=|m_{u-c}(B)|\le m_{|u-c|}(B) ,$$
which further implies that
\begin{equation}\label{e5.9}
  |m_u(B)-c|
 \le (|u-c|\chi_B)^\ast(|B|/2)\le \lf\{2\bint_B|u(w)-c|^\dz \,dw\r\}^{1/\dz }.
 \end{equation}
Indeed, letting $\sz\equiv \bint_B|u(w)-c|^\dz\,dw$,
by Chebyshev's inequality, we have
\begin{eqnarray*}
\lf|\lf\{w\in B:\ |u(w)-c|>(2\sz)^{1/\dz }\r\}\r|
&&=\lf|\lf\{w\in B:\ |u(w)-c|^\dz> 2\sz \r\}\r|\\
&&\le ( 2\sz)^{-1 }\int_B|u(w)-c|^\dz\,dw=\frac{|B|}2,
\end{eqnarray*}
which implies the second inequality of \eqref{e5.9}.
For the first inequality, since
$$|\{w\in B:\ |u(w)-c|\ge m_{|u-c|}(B)\}|=|B|-|\{w\in B:\ |u(w)-c|< m_{|u-c|}(B)\}|\ge \frac{|B|}2,$$
for all $\az<m_{|u-c|}(B)$, we have $|\{w\in B:\ |u(w)-c|> \az\}|\ge |B|/2$, which implies that
 $\az< (|u-c|\chi_B)^\ast(|B|/2)$
and hence  $m_{|u-c|}(B)\le(|u-c|\chi_B)^\ast(|B|/2)$. This gives the first inequality of \eqref{e5.9}.

Combining \eqref{e5.9}, \eqref{e5.8} and $r_{j+1}\le r_j\le Nr_j$ yields that
\begin{eqnarray*}
 |m_u(B(z,\,r_j))-c_{B(z,\,r_j)}|
&&\ls\inf_{c\in\rr}\lf\{ \bint_{B(z,\,r_j)}|u(w)-c|^\dz \,dw\r\}^{1/\dz }
\end{eqnarray*}
and
\begin{eqnarray}\label{e5.10}
 |m_u(B(z,\,r_{j+1}))-c_{B(z,\, r_j)}|&&\le
\lf\{2\bint_{B(z,\, r_{j+1})}|u(w)-c_{B(z,\,r_j)}|^\dz \,dw\r\}^{1/\dz }\\
&&\ls\inf_{c\in\rr}\lf\{ \bint_{B(z,\, r_j)}|u(w)-c|^\dz \,dw\r\}^{1/\dz }.\nonumber
\end{eqnarray}
Therefore,
\begin{equation}\label{e5.11}
 |u(z)-m_u(B(z,\,r_0))|
 \ls \sum_{j\ge 0}\inf_{c\in\rr}\lf\{ \bint_{B(z,\,r_j)}|u(w)-c|^\dz \,dw\r\}^{1/\dz }.
\end{equation}

For  all $x,\,y\in\rn\setminus E$ with $2^{-k-1}\le |x-y|<2^{-k}$, write
\begin{eqnarray*}
 |u\circ f(x)-u\circ f(y)|
&& \le
|u\circ f(x)-m_u(B(f(x),\,2L_f(x,\,2^{-k}))|\\
&& \quad +
|m_u(B(f(y),\,L_f(y,\,2^{-k-1}))-m_u(B(f(x),\,2L_f(x,\,2^{-k}))|\\
&& \quad  +
|u\circ f(y)-m_u(B(f(y),\,L_f(x,\,2^{-k-1}))|\\
&&\equiv I_1+I_2+I_3.
\end{eqnarray*}
Since $$L_f(z,\,2^{-{j+1}})\le L_f(z,\,2^{-{j }})\le 2^{K_1}L_f(z,\,2^{-{j+1}}),$$
by \eqref{e5.11}, we have
$$ I_1
 \ls \sum_{j\ge 0}\inf_{c\in\rr}\lf\{ \bint_{B(f(x),\,2L_f(x,\,2^{-j}))}|u(w)-c|^\dz \,dw\r\}^{1/\dz }$$
and
 $$I_3 \ls
  \sum_{j\ge 1}\inf_{c\in\rr}\lf\{ \bint_{B(f(y),\,2L_f(x,\,2^{-j}))}|u(w)-c|^\dz \,dw\r\}^{1/\dz }.$$
Moreover, by an argument similar to \eqref{e5.10}, we have
\begin{eqnarray*}
I_2
&&\ls
|m_u(B(f(y),\,L_f(y,\,2^{-k-1}))-c_{B(f(x),\, 2L_f(x,\,2^{-k}))}|\\
&&\quad\quad+|m_u(B(f(x),\,2L_f(x,\,2^{-k}))-c_{B(f(x),\,2L_f(x,\,2^{-k}))}|\\
&&\ls \inf_{c\in\rr}\lf\{ \bint_{B(f(x),\,2L_f(x,\,2^{-k}))}|u(w)-c|^\dz \,dw\r\}^{1/\dz }.
\end{eqnarray*}
Combining the estimates of $I_1$, $I_2$ and $I_3$ gives the above   claim \eqref{e5.7}.
This finishes  the proof of Case (iii)
and hence  Theorem \ref{t1.3}.
\end{proof}

To prove Theorem \ref{t1.4}, we need the following result which is deduced from Theorem 7.11 and Corollary 7.13 of \cite{hk98},
\cite{kz08} and the Lebesgue-Radon-Nikodym theorem.

\begin{prop}\label{p5.3}
Let $\cx$ and $\cy$ be locally compact Ahlfors $n$-regular spaces for some $n>1$ and 
assume that 
$\cx$ admits a weak $(1,\,n)$-Poincar\'e  inequality. 
Let $f:\ \cx\to\cy$ be a quasisymmetric mapping. 
Then

(i) $f$ is absolutely continuous,
$J_f\in\cb_r(\cx)$ for some $r\in(1,\,\fz]$, $J_fd\mu_\cx$ is a doubling measure and
\begin{equation*} \int_EJ_f(x)\,d\mu_\cx(x)= |f(E)|\end{equation*}
for every measurable set $E\subset\cx$.
Here $J_f$ denotes the volume (Radon-Nikodym) derivative  of $f$, namely, 
\begin{equation*}
 J_f(x)\equiv\lim_{r\to0}\frac{\mu_\cy(f(B(x,\,r)))}{\mu_\cx(B(x,\,r))},
\end{equation*}
which exists and is finite for almost all $x\in\cx$.

(ii) $f^{-1}$ is also a quasisymmetric mapping, absolutely continuous and
for almost all $x\in\cx$, $J_{f^{-1}}(f(x))=[J_f(x)]^{-1}$ .
\end{prop}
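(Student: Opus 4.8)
The plan is to deduce Proposition \ref{p5.3} from the deep structure theory of quasisymmetric mappings between Ahlfors regular spaces carrying a Poincar\'e inequality, namely \cite[Theorem 7.11 and Corollary 7.13]{hk98} together with \cite{kz08}, and then to read off the pointwise formula for $J_f$, the change-of-variable identity and the doubling property by purely measure-theoretic arguments. The measure-theoretic machinery (Lebesgue--Radon--Nikodym, the Lebesgue differentiation theorem, Vitali covering) is available because Ahlfors $n$-regularity forces the doubling property and separability of both $\cx$ and $\cy$.

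For part (i), I would first invoke \cite[Theorem 7.11]{hk98}: under the standing hypotheses the quasisymmetric mapping $f$ satisfies Lusin's condition (N), that is, $\mu_\cy(f(E))=0$ whenever $\mu_\cx(E)=0$ --- this is exactly the step where the weak $(1,n)$-Poincar\'e inequality on $\cx$ is used. Then \cite[Corollary 7.13]{hk98}, whose reverse H\"older exponent is opened up by the self-improvement in \cite{kz08}, produces a volume derivative $J_f$, defined a.e.\ by the stated limit, with $J_f\in\cb_r(\cx)$ for some $r>1$. Next, set $\nu(E)\equiv\mu_\cy(f(E))$; this is a Borel measure on $\cx$ (as $f$ is a homeomorphism), finite on bounded sets since $f(B(x,t))\subset B(f(x),L_f(x,t))$ and $\cy$ is Ahlfors $n$-regular. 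Condition (N) says $\nu\ll\mu_\cx$, so by Lebesgue--Radon--Nikodym $d\nu=w\,d\mu_\cx$ with $0\le w\in L^1_\loc(\cx)$, and by the Lebesgue differentiation theorem in the doubling space $(\cx,d,\mu_\cx)$ one has, for $\mu_\cx$-a.e.\ $x$, $w(x)=\lim_{t\to0}\nu(B(x,t))/\mu_\cx(B(x,t))=\lim_{t\to0}\mu_\cy(f(B(x,t)))/\mu_\cx(B(x,t))=J_f(x)$. This simultaneously shows that the limit defining $J_f$ exists and is finite a.e., that $J_f$ is the Radon--Nikodym density, and that $\int_E J_f\,d\mu_\cx=\nu(E)=\mu_\cy(f(E))$ for every measurable $E\subset\cx$. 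Finally, for the doubling of $J_f\,d\mu_\cx=\nu$ I would combine $B(f(x),\ell_f(x,t))\subset f(B(x,t))$, the quasisymmetry estimate $L_f(x,2t)\le \eta(2)\,\ell_f(x,t)$, and $\mu_\cy(B(f(x),\rho))\sim\rho^n$ to get $\nu(B(x,2t))\ls\nu(B(x,t))$.

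For part (ii), I would first recall the standard fact that the inverse of an $\eta$-quasisymmetric homeomorphism is again quasisymmetric, with control function $t\mapsto 1/\eta^{-1}(1/t)$; see \cite{tv}. To obtain absolute continuity of $f^{-1}$ I would use that $J_f\in\cb_r(\cx)$ with $r>1$ makes $J_f$ a Muckenhoupt $A_\fz$-weight on the doubling space $\cx$, so $\mu_\cx$ and $J_f\,d\mu_\cx$ are mutually absolutely continuous; in particular $J_f>0$ $\mu_\cx$-a.e. Hence, if $F\subset\cy$ with $\mu_\cy(F)=0$, the identity of part (i) gives $0=\mu_\cy(f(f^{-1}(F)))=\int_{f^{-1}(F)}J_f\,d\mu_\cx$, forcing $\mu_\cx(f^{-1}(F))=0$; so $f^{-1}$ satisfies condition (N). Applying the purely measure-theoretic part of the argument of (i) to the quasisymmetric map $f^{-1}$ (no Poincar\'e inequality is needed for this) yields $J_{f^{-1}}$ a.e.\ on $\cy$ with $\int_F J_{f^{-1}}\,d\mu_\cy=\mu_\cx(f^{-1}(F))$. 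The relation $\int_E J_f\,d\mu_\cx=\mu_\cy(f(E))$ is precisely $f_\ast^{-1}\mu_\cy=J_f\,d\mu_\cx$, which gives the change-of-variable formula $\int_\cy g\,d\mu_\cy=\int_\cx (g\circ f)\,J_f\,d\mu_\cx$ for nonnegative Borel $g$; taking $g=J_{f^{-1}}$ and restricting to $f(E)$ gives $\mu_\cx(E)=\int_{f(E)}J_{f^{-1}}\,d\mu_\cy=\int_E (J_{f^{-1}}\circ f)\,J_f\,d\mu_\cx$ for all measurable $E\subset\cx$, hence $J_{f^{-1}}(f(x))\,J_f(x)=1$ for $\mu_\cx$-a.e.\ $x$.

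The genuinely hard input --- absolute continuity (condition (N)) of quasisymmetric maps under a Poincar\'e inequality, and the self-improving reverse H\"older inequality for the volume derivative --- is entirely contained in the cited results, so the main obstacle here is not analytic but organizational: one must verify that Ahlfors regularity really does supply all the covering/differentiation tools invoked above, and that the a.e.\ positivity of $J_f$, which is what upgrades condition (N) for $f$ to condition (N) for $f^{-1}$, is legitimately extracted from membership in $\cb_r$ with $r>1$.
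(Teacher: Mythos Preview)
Your proposal is correct and follows essentially the same route the paper indicates: the paper does not give a detailed proof but simply states that Proposition \ref{p5.3} is deduced from Theorem 7.11 and Corollary 7.13 of \cite{hk98}, \cite{kz08}, and the Lebesgue--Radon--Nikodym theorem, which are precisely the ingredients you invoke. Your fleshing out of the measure-theoretic steps (condition (N) $\Rightarrow$ absolute continuity of the pushforward, Lebesgue differentiation to identify the density with $J_f$, $A_\fz$-membership from $\cb_r$ to get $J_f>0$ a.e.\ and hence condition (N) for $f^{-1}$) is accurate and is exactly what the paper leaves implicit.
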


\begin{proof}[Proof of Theorem \ref{t1.4}.]
With the assumptions of Theorem \ref{t1.4}, by Corollary 4.8 and Theorem 5.7 of \cite {hk98},
we know that $f$ is actually an $\eta$-quasisymmetric mapping
for some homeomorphism $\eta:\ [0,\,\fz)\to[0,\,\fz)$. Moreover, since the assumptions of Theorem \ref{t1.4}
also imply those of Proposition \ref{p5.3}, we have that $J_f\in\cb_r(\cx)$ and 
  $J_fd\mu_\cx$ is also a doubling measure, which together with  \cite{st89} imply that 
$J_f$ is a weight in the sense of Muckenhoupt.
Therefore, a variant of Proposition \ref{p5.2}(iii) still holds in this setting. 
Moreover, recall that by \cite[Proposition 10.8]{hei},
for every pair of sets $A$ and $B$ satisfying $A\subset B\subset\cx$ and $0<\diam A\le \diam B<\fz$,
\begin{equation}\label{e5.12}
\frac{1}{2\eta\lf(\frac{\diam B}{\diam A}\r)}\le \frac{\diam f(A)}{\diam f(B)}\le \eta\lf(\frac{2\diam A}{\diam B}\r).
\end{equation}
Then, with the aid of these facts, Proposition \ref{p5.3} and Proposition \ref{p4.1},
the proof of Theorem \ref{t1.4} is essentially the same as that of Theorem \ref{t1.3}.
We omit the details.
\end{proof}

As the above proof shows,  with the assumptions of Proposition \ref{p5.3},
a similar conclusion  of Theorem \ref{t1.4} still holds. 

\begin{cor}\label{r5.1}
Let the assumptions be as in Proposition \ref{p5.3}. 
Then for all $s\in(0,\,1]$ and $q\in(0,\,\fz]$, 
a quasisymmetric mapping $f:\ \cx\to\cy$ induces an equivalence between  $\dot M^s_{n/s,\,q}(\cx)$
and $\dot M^s_{n/s,\,q}(\cy)$.  
\end{cor}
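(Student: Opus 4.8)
The plan is to repeat, essentially verbatim, the proof of Theorem \ref{t1.3}, observing that Proposition \ref{p5.3} supplies in the present generality precisely those properties of the volume derivative that Propositions \ref{p5.1} and \ref{p5.2} and Lemma \ref{l5.1} provided on $\rn$. First I would reduce the claim, exactly as in Theorem \ref{t1.3}, to the one-sided statement that $f$ induces a bounded operator $u\mapsto u\circ f$ from $\dot M^s_{n/s,\,q}(\cy)$ to $\dot M^s_{n/s,\,q}(\cx)$; since by Proposition \ref{p5.3}(ii) the inverse $f^{-1}$ is again quasisymmetric and satisfies the same hypotheses, with $J_{f^{-1}}(f(x))=[J_f(x)]^{-1}$, the reverse inclusion and hence the equivalence follow by symmetry.

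Next I would assemble the metric tool-kit. From Proposition \ref{p5.3}(i) one has $J_f\in\cb_r(\cx)$ for some $r\in(1,\,\fz]$, that $J_f\,d\mu_\cx$ is doubling, and the change-of-variable identity $\int_\cx v(f(x))J_f(x)\,d\mu_\cx(x)=\int_\cy v(y)\,d\mu_\cy(y)$ for nonnegative Borel $v$ (the analogue of Lemma \ref{l5.1}). Since $\cx$ and $\cy$ are Ahlfors $n$-regular, $\mu_\cx$ and $\mu_\cy$ are doubling, so the Hardy--Littlewood maximal operator $\cm$ and the Fefferman--Stein vector-valued maximal inequality are available on both spaces. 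Because $J_f\,d\mu_\cx$ is doubling and, by \cite{st89}, $J_f$ is therefore a Muckenhoupt weight, one obtains the analogue of Proposition \ref{p5.2}(iii), namely $\mu_\cy(f(E))/\mu_\cy(f(B))\ls(\mu_\cx(E)/\mu_\cx(B))^\az$ for $E\subset B$ and some $\az\in(0,\,1]$; moreover $\eta$-quasisymmetry combined with Ahlfors $n$-regularity of $\cy$ and \eqref{e5.12} yields $[L_f(x,\,r)]^n\sim\mu_\cy(f(B(x,\,r)))$, the analogue of \eqref{e5.4}, and hence the counting estimate \eqref{e5.5}.

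With these facts in hand I would run the three cases $q=n/s$, $q\in(n/s,\,\fz]$ and $q\in(0,\,n/s)$ exactly as in the proof of Theorem \ref{t1.3}: choose $\vec g\in\bd^s(u)$ on $\cy$ nearly realizing $\|u\|_{\dot M^s_{n/s,\,q}(\cy)}$, build the candidate $\vec h=\{h_j\}_{j\in\zz}$ on $\cx$ by averaging $\wz g_{L_f(\cdot,\,2^{-j})}\circ f$ as in \eqref{e5.2} (respectively, after the reverse H\"older inequality, $\{\cm((\wz g\circ f)^p[J_f]^{ps/n})\}^{1/p}$, and in the last case the maximal function arising from Lemma \ref{l2.3}), verify that $\vec h$ is a constant multiple of an element of $\wz\bd^{s,\,s,\,N}(u\circ f)$ for a fixed integer $N$, and estimate $\|\vec h\|_{L^{n/s}(\cx,\,\ell^q)}\ls\|u\|_{\dot M^s_{n/s,\,q}(\cy)}$ using the change-of-variable identity, the reverse H\"older inequality, the Fefferman--Stein inequality and \eqref{e5.5}. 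Theorem \ref{t2.1} then gives $u\circ f\in\dot M^s_{n/s,\,q}(\cx)$ with the asserted bound; if desired, the equivalence of the corresponding Besov-type spaces follows from Proposition \ref{p4.1} and Theorem \ref{t4.1}.

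The genuinely new ingredient is the pointwise estimate \eqref{e5.7}, whose proof on $\rn$ rests on Fujii's lemmas \cite{f90} about medians and non-increasing rearrangements; the main obstacle is thus to record a doubling--metric--measure--space version of those lemmas, namely that for a doubling measure almost every point is a point of median convergence and that $|m_u(B)-c|\le\{2\,\bint_B|u-c|^\dz\,d\mu\}^{1/\dz}$. Granting this (which is routine for doubling $\mu$), the telescoping argument over comparable radii $r_j\sim L_f(x,\,2^{-j})$, together with the containment $B(f(x),\,4L_f(x,\,2^{-j}))\subset f(B(x,\,2^{-j+K_2}))$ coming from the analogue of Proposition \ref{p5.2}(iii) and \eqref{e5.4}, and Lemma \ref{l2.3}, reproduces \eqref{e5.7} word for word, finishing Case (iii) and hence the corollary.
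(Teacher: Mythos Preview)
Your approach is correct and matches the paper's own: the paper gives no separate argument for this corollary, simply remarking that the proof of Theorem \ref{t1.4} (which in turn transplants the three-case proof of Theorem \ref{t1.3} to the metric setting via Proposition \ref{p5.3}, \eqref{e5.12}, and the metric variants of Lemmas \ref{l2.1}--\ref{l2.3}) goes through under the hypotheses of Proposition \ref{p5.3}. Your sketch of assembling the metric tool-kit, running the three cases, and noting that Fujii's median lemmas carry over to doubling metric measure spaces is exactly the intended route.

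One small imprecision worth correcting: you write that $f^{-1}$ ``satisfies the same hypotheses,'' but Proposition \ref{p5.3} requires a weak $(1,n)$-Poincar\'e inequality on the \emph{domain}, and this is not assumed for $\cy$. You cannot therefore obtain $J_{f^{-1}}\in\cb_{r'}(\cy)$ by re-applying Proposition \ref{p5.3}. Instead, argue as the paper does after Corollary \ref{c5.2}: Proposition \ref{p5.3}(ii) gives that $f^{-1}$ is quasisymmetric and absolutely continuous with $J_{f^{-1}}(f(x))=[J_f(x)]^{-1}$; since $J_f\in\cb_r(\cx)$ makes $J_f$ an $A_\infty$ weight, the push-forward relation and \cite{st89} yield $J_{f^{-1}}\in\cb_{r'}(\cy)$ for some $r'>1$. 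With this adjustment the symmetry reduction is justified and the rest of your outline stands.
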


Finally, we turn to the proof of Theorem \ref{t5.1}.
To this end, we need the
following Lebesgue theorem for Haj\l asz-Sobolev functions,
which is proved by modifying the proof of
\cite[Theorem 4.4]{hak98} slightly (see also \cite[Theorem 4.4]{kl02}).

\begin{lem}\label{l5.2}
Let $\cx$ be an Ahlfors $n$-regular space with $n>0$ and $s\in(0,\,n)$.
Then for every $u\in \dot \cb^{s}_{n/s}(\cx)$, 
there exists a set $F$ such that $\cx\setminus F$ has   Hausdorff dimension zero
and $\wz u(x)\equiv \lim_{r\to0}\bint_{B(x,\,r)} u(z)\,d\mu(z)$
exists for all $x\in F$.
\end{lem}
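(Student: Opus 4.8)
The plan is to identify the exceptional set
$$E\equiv\Big\{x\in\cx:\ \lim_{r\to0}\bint_{B(x,\,r)}u\,d\mu\ \text{fails to exist in }\rr\Big\},$$
take $F\equiv\cx\setminus E$, and prove that $\ch^\bz(E)=0$ for every $\bz\in(0,\,n)$ (where $\ch^\bz$ is $\bz$-dimensional Hausdorff measure), which forces $E$ to have Hausdorff dimension zero. By Proposition \ref{p4.1}, $u\in\dot\cb^s_{n/s}(\cx)=\dot M^s_{n/s,\,n/s}(\cx)$, so I first fix $\vec g=\{g_k\}_{k\in\zz}\in\bd^s(u)$ with $\|\vec g\|_{L^{n/s}(\cx,\,\ell^{n/s})}<\fz$ and set $h_k\equiv g_k^{n/s}$, so that $\sum_{k\in\zz}\|h_k\|_{L^1(\cx)}=\|\vec g\|_{L^{n/s}(\cx,\,\ell^{n/s})}^{n/s}<\fz$. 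Since $n/s>1$ and $\cx$ is doubling, $u\in L^{n/s}_\loc(\cx)\subset L^1_\loc(\cx)$, hence every average $u_{B(x,\,r)}$ is finite.

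Next I would record the telescoping estimate. Fix $x\in\cx$, $j\in\zz_+$, and write $B\equiv B(x,\,2^{-j})$. For $\mu\times\mu$-a.e. $(y,z)\in B\times B$ with $y\ne z$ one has $d(y,z)<2^{-j+1}$, so the index $\ell$ determined by $2^{-\ell-1}\le d(y,z)<2^{-\ell}$ satisfies $\ell\ge j-1$, whence, by the definition of $\bd^s(u)$ off its null set, $|u(y)-u(z)|\le d(y,z)^s[g_\ell(y)+g_\ell(z)]\le\sum_{k\ge j-1}2^{-ks}[g_k(y)+g_k(z)]$. Averaging over $B\times B$ and applying Jensen's inequality (recall $n/s\ge1$) gives
$$\bint_B|u-u_B|\,d\mu\le 2\sum_{k\ge j-1}2^{-ks}\bint_B g_k\,d\mu\le 2\sum_{k\ge j-1}2^{-ks}\Big(\bint_B h_k\,d\mu\Big)^{s/n}=:2D_j(x).$$
Hence $|u_{B(x,\,2^{-j-1})}-u_{B(x,\,2^{-j})}|\ls D_j(x)$ and, by the doubling property, $|u_{B(x,\,r)}-u_{B(x,\,2^{-j})}|\ls D_j(x)$ for $2^{-j-1}\le r<2^{-j}$; so if $\sum_{j\ge0}D_j(x)<\fz$, then $\{u_{B(x,\,2^{-j})}\}_{j\ge0}$ is Cauchy and $\lim_{r\to0}u_{B(x,\,r)}$ exists in $\rr$. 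Thus $E\subset\{x\in\cx:\ \sum_{j\ge0}D_j(x)=\fz\}$, and it remains to bound this set.

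Now fix $\bz\in(0,\,n)$, set $\gz\equiv n-\bz\in(0,\,n)$, and introduce the truncated fractional maximal operator $\cm_\gz\phi(x)\equiv\sup_{0<r<2}r^\gz\bint_{B(x,\,r)}|\phi|\,d\mu$. Using $\bint_B h_k\,d\mu\le 2^{j\gz}\cm_\gz h_k(x)$, interchanging the summations over $j$ and $k$ (the constraints are $k\ge-1$ and $0\le j\le k+1$), and summing the geometric series $\sum_{j=0}^{k+1}2^{j\gz s/n}\ls 2^{k\gz s/n}$, I obtain
$$\sum_{j\ge0}D_j(x)\ls\sum_{k\ge-1}2^{-ks(n-\gz)/n}\big[\cm_\gz h_k(x)\big]^{s/n}=\sum_{k\ge-1}2^{-ks\bz/n}\big[\cm_\gz h_k(x)\big]^{s/n}=:W(x);$$
the key gain is that the exponent $-ks\bz/n$ is strictly negative, so this sum decays geometrically in $k$, and the restriction to scales $r<1$ (i.e.\ $j\ge0$) is what keeps $k$ bounded below by $-1$. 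The other ingredient is a Hausdorff-content weak-type estimate: since $\cx$ is Ahlfors $n$-regular, $r^\gz\bint_{B(x,\,r)}|\phi|\,d\mu\sim r^{\gz-n}\int_{B(x,\,r)}|\phi|\,d\mu$, and the basic $5r$-covering lemma gives, for all $\phi\in L^1(\cx)$ and $\lz>0$,
$$\ch^\bz_\fz\big(\{x\in\cx:\ \cm_\gz\phi(x)>\lz\}\big)\le C\lz^{-1}\|\phi\|_{L^1(\cx)},$$
where $\ch^\bz_\fz$ is $\bz$-dimensional Hausdorff content and $C=C(n,\bz)$.

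Finally, I would apply this with $\phi=h_k$ and $\lz=2^{k\bz/2}$: the set $\Omega_k\equiv\{x:\ 2^{-ks\bz/n}[\cm_\gz h_k(x)]^{s/n}>2^{-ks\bz/(2n)}\}$ equals $\{x:\ \cm_\gz h_k(x)>2^{k\bz/2}\}$, so $\ch^\bz_\fz(\Omega_k)\ls 2^{-k\bz/2}\|h_k\|_{L^1(\cx)}$ and $\sum_{k\ge-1}\ch^\bz_\fz(\Omega_k)\ls\sum_{k\ge-1}\|h_k\|_{L^1(\cx)}<\fz$. By the countable subadditivity of $\ch^\bz_\fz$ (a Borel--Cantelli argument at the level of Hausdorff content), the set of $x$ lying in infinitely many $\Omega_k$ has $\ch^\bz_\fz$-measure zero, as does each $\{x:\ \cm_\gz h_k(x)=\fz\}$; off the union of these null sets one has $W(x)<\fz$, hence $\sum_{j\ge0}D_j(x)<\fz$ and $x\notin E$. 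Therefore $\ch^\bz(E)=0$, and since $\bz\in(0,\,n)$ was arbitrary, $E$ has Hausdorff dimension zero; on $F=\cx\setminus E$ the function $\wz u(x)\equiv\lim_{r\to0}\bint_{B(x,\,r)}u\,d\mu$ is well defined, which is the assertion. The step I expect to require the most care is the telescoping bookkeeping that produces the decaying factor $2^{-ks\bz/n}$ in $W$ while keeping the index $k$ bounded below (this is exactly why one restricts to small scales, so that the tail $\sum_k\|h_k\|_{L^1(\cx)}$ can be used), together with the routine but essential verification of the Hausdorff-content weak-type bound for $\cm_\gz$ in the Ahlfors-regular setting.
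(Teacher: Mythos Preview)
Your argument is correct, but it takes a more elaborate route than the paper's. The key difference is that you work with the full sequence $\vec g=\{g_k\}\in\bd^s(u)$ coming from the identification $\dot\cb^s_{n/s}(\cx)=\dot M^s_{n/s,\,n/s}(\cx)$, whereas the paper immediately passes to a \emph{single} Haj\l asz gradient $g\in\cd^s(u)\cap L^{n/s}(\cx)$ via the inclusion $\dot\cb^s_{n/s}(\cx)\subset\dot M^{s,\,n/s}(\cx)$ (this is just $\ell^{n/s}\subset\ell^\infty$ together with Proposition~\ref{p2.1}). With a single $g$ the telescoping estimate becomes
\[
|u_{B(x,2^{-k})}-u_{B(x,2^{-j})}|\ls 2^{-j(s-\ez)}\sup_{i\ge j}2^{-i\ez}\bint_{B(x,2^{-i})}g\,d\mu,
\]
so the exceptional set is contained in $\{x:\sup_{i\ge0}2^{-i\ez}\bint_{B(x,2^{-i})}g\,d\mu=\infty\}$, and a single application of the weak-type Hausdorff-content bound for the fractional maximal function of $g^{n/s}$ (exactly your $\cm_\gz$ estimate, applied once) finishes the proof. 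Your approach instead keeps the $k$-dependence, which forces the additional Borel--Cantelli layer to control the sequence $\{\cm_\gz h_k\}_{k\ge-1}$. Both arguments rely on the same geometric ingredient (the $\ch^\bz_\infty$ weak-type bound for $\cm_\gz$), but collapsing to a single gradient at the outset, as the paper does, eliminates the bookkeeping you flagged as delicate and shortens the proof considerably.
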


\begin{proof}
Let $u\in \cb^{s}_{n/s}(\cx)\subset\dot M^{s,\,n/s}(\cx)$. For all $x\in\cx$,
define
$$\wz u(x)\equiv \limsup_{r\to0}\bint_{B(x,\,r)} u(z)\,d\mu(z).$$
By Lemma \ref{l2.2}, $u$ is locally integrable and hence, $\wz u(x)=u(x)$ for almost all $x\in\cx$.
Denote by $F$ the set of all $x\in\cx$ such that  $\wz u(x)= \lim_{r\to0}\bint_{B(x,\,r)} u(z)\,d\mu(z).$
Then, to show Lemma \ref{l5.2},
it suffices to prove that $\cx\setminus F$ has  Hausdorff dimension zero.
Let $g\in\cd^s(\cx)\cap L^{n/s}(\cx)$
 and $\ez\in(0,\,s)$.
Notice that for $x\in\cx$ and $j,\,k\in\zz$ with $k\ge j+1$,
\begin{eqnarray*}
|u_{B(x,\,2^{-k})}-u_{B(x,\,2^{-j})}|&&\le \sum_{i= j}^{k-1} |u_{B(x,\,2^{-i-1})}-u_{B(x,\,2^{-i})}|\\
&&\le \sum_{i= j}^{k-1}  \bint_{B(x,\,2^{-i})}|u(z)-u_{B(x,\,2^{-i})}|\,d\mu(z)\\
&&\le \sum_{i= j}^{k-1} 2^{-is} \bint_{B(x,\,2^{-i})}g(z)\,d\mu(z)\\
&&\ls 2^{-j(s-\ez)}\sup_{i\ge j} 2^{-i\ez} \bint_{B(x,\,2^{-i})}g(z)\,d\mu(z).
\end{eqnarray*}
If $\sup_{i\ge 0} 2^{-i\ez} \bint_{B(x,\,2^{-i})}g(z)\,dz<\fz$, then  $x\in F$.
Thus,
$$\cx\setminus F\subset G \equiv \lf\{x\in\cx:\ \sup_{i\ge 0} 2^{ -i \ez} \bint_{B(x,\,2^{-i})}g(z)\,d\mu(z)=\fz\r\}.$$
Since for all $i\in\nn$ and $x\in\cx$, by the H\"older inequality,
$$2^{ -i \ez} \bint_{B(x,\,2^{-i})}g(z)\,d\mu(z)\le \lf\{2^{ -i n\ez/s }
\bint_{B(x,\,2^{-i})}[g(z)]^{n/s}\,d\mu(z)\r\}^{s/n},$$
then by \cite[Lemma 2.6]{hak98},  for all $N\in\nn$, we further have
\begin{eqnarray*}
&&\ch_\fz^{n(1-\ez/s)}(G\cap B(y,\,1))\\
&&\quad\le \ch_\fz^{n(1-\ez/s)}\lf(\lf\{x\in B(y,\,1):\ \sup_{i\ge 0} 2^{ -in \ez/s}  \bint_{B(x,\,2^{-i})}[g(z)]^{n/s}\,d\mu(z)>N^{n/s}\r\}\r)\\
&&\quad\ls N^{-n/s} \int_\cx[g(z)]^{n/s}\,d\mu(z),
\end{eqnarray*}
which implies that $\ch_\fz^{n(1-\ez/s)}(G\cap B(y,\,1))=0$ and hence $\ch^{n(1-\ez/s)}(G\cap B(y,\,1))=0$.
Here $\ch_\fz^{n(1-\ez/s)}$ and $\ch^{n(1-\ez/s)}$ denote the Hausdorff content and the Hausdorff measure,
respectively.
Because we are free to choose $\ez\in(0,\,s)$, we conclude that $G$ and
hence $\cx\setminus F$ have  Hausdorff dimension zero.
This finishes the proof of Lemma \ref{l5.2}.
\end{proof}

\begin{proof}[Proof of Theorem \ref{t5.1}]
Let $f:\ \cx\to\cy$ be an $\eta$-quasisymmetric mapping.
By \cite[Proposition 10.6]{hei},
  $f^{-1}$ is an $\wz\eta$-quasisymmetric with $\wz\eta(t)=1/\eta^{-1}(t^{-1})$ for all $t\in(0,\,\fz)$.
Moreover, since $\cx$ and $\cy$ are Ahlfors regular spaces and hence uniformly perfect, by \cite[Corollary 11.5]{hei},
$f$ and $f^{-1}$ are H\"older continuous on bounded sets of $\cx$ and $\cy$, respectively.

Let $u\in\dot M^{s_2}_{n_2/s_2,\,n_2/s_2}(\cy)$ and $\wz u$ be as in Lemma \ref{l5.2}.
Then Lemma \ref{l5.2} says that $\wz u(x)=u(x)$ for almost all $x\in\cy$,
and the complement of the set of all the Lebesgue points of $\wz u$ is contained in $\cy\setminus F$
and has Hausdorff dimension zero.
By abuse of notation, we still denote $\wz u$ by $u$. Since
$f^{-1}$ is H\"older continuous on every bounded set of $\cy$,
it is easy to check that $f^{-1}(\cy\setminus F)$ has Hausdorff dimension zero and
hence $\mu_\cx(f^{-1}(\cy\setminus F))=0$.
Let $\vec g\in\bd^{s_2}(u)$ such that
$\|\vec g\|_{L^{n_2/s_2}(\cy,\,\ell^{n_2/s_2})}\ls\|u\|_{\dot \cb^{s_2}_{n_2/s_2}(\cy)}$
and \eqref{e1.1} holds with some set $E$ having measure zero.
For all $x\in f^{-1}(F)\subset\cx$  and for all $j\in\zz$, if $2^{-j}<2\diam\cx$,   
set$$h_j(x)\equiv 2^{js_1} [L_f(x,\,2^{-j})]^{s_2} \bint_{f(B(x,\,2^{-j}))}
\wz g_{L_f(f^{-1}(y),\,2^{-j})}(y)\,d\mu_\cy(y),$$
and if  $2^{-j}\ge 2\diam\cx$,  set $h_j\equiv 0$.
Since  $\mu_\cx(\cx\setminus f^{-1}(F))=\mu_\cx( f^{-1}(\cy\setminus F))=0$, 
$\vec h$ is well-defined.
 Moreover, for each $x\in f^{-1}(F)$, since $f(x)$ is a Lebesgue point of $u$,
it follows that $u_{f(B(x,\,2^{-j}))}\to u\circ f(x)$ as $j\to\fz$.
Observing that
$\mu_\cx(\cx\setminus f^{-1}(F))=0$, by an argument as in the proof of Theorem \ref{t1.3}, we can prove that
$\vec h\equiv\{h_j\}_{j\in\zz}$ is a constant multiple of an element of $\bd^{s_1,\,s_1,\,K_0}(u\circ f)$
for some constant $K_0$ determined by \eqref{e5.12} and the constants appearing in \eqref{e1.2} for $\mu_\cy$.

Now we estimate $\|\vec h\|_{L^{n_1/s_1}(\cx,\,\ell^{n_1/s_1})}$.
In fact, since
$[L_f(x,\,2^{-j})]^{n_2}\sim |f(B(x,\,2^{-j}))|,$
by the H\"older inequality and $n_1/s_1=n_2/s_2$, we then have
$$  h_j(x)\ls 2^{js_1}\lf\{\int_{f(B(x,\,2^{-j}))}[\wz g_{L_f(f^{-1}(y),\,2^{-j})}(y)]^{n_1/s_1}\,d\mu_\cy(y)\r\}^{s_1/n_1}.
$$ Noticing that $y\in f(B(x,\,2^{-j}))$ implies that $x\in B(f^{-1}(y),\,2^{-j})$, by an argument similar to that of 
the proof of 
Theorem \ref{t1.3}, we have
\begin{eqnarray*}
&&\|\vec h\|^{n_1/s_1}_{L^{n_1/s_1}(\cx,\,\ell^{n_1/s_1}) }\\
&&\quad
\ls \int_\cy\sum_{k\in\zz}
\lf(\sharp\lf\{j\in\zz:\,  L_f(f^{-1}(y),\,2^{-j})\in[2^{-k-1},\,2^{-k})\r\}\r) [\wz g_k(y)]^{n_1/s_1} \,d\mu_\cy(y).
\end{eqnarray*}
Moreover, observe that for all $k\in\zz$ and $y\in\cy$, by \eqref{e5.12},
\begin{equation*}
 \sharp\lf\{j\in\zz:\  L_f(f^{-1}(y),\,2^{-j })\in[2^{-k-1},\,2^{-k}) \r\}\ls 1.
\end{equation*}
By $n_1/s_1=n_2/s_2$, we then have
\begin{eqnarray*}
\|\vec h\|^{n_1/s_1}_{L^{n_1/s_1}(\cx,\,\ell^{n_1/s_1})}
 \ls \sum_{k\in\zz}\int_\cy [\wz g_k(y)]^{n_1/s_1} \,d\mu(y)\ls \|\vec g\|^{n_2/s_2}_{L^{n_2/s_2}(\cy,\,\ell^{n_2/s_2})}.
\end{eqnarray*}
Thus, $\|u\circ f\|_{\dot M^{s_1}_{n_1/s_1,\,n_1/s_1}(\cx)}\ls\|u\|_{\dot M^{s_2}_{n_2/s_2,\,n_2/s_2}(\cy)}$.
Applying the above result to $f^{-1}$, we obtain that $\|u\circ f^{-1}\|_{\dot M^{s_1}_{n_1/s_1,\,n_1/s_1}(\cy)}\ls\|u\|_{\dot M^{s_2}_{n_2/s_2,\,n_2/s_2}(\cx)}$, which completes the proof of Theorem \ref{t5.1}.
\end{proof}

Moreover, combining  the proofs of
 Theorems \ref{t5.1} and \ref{t1.3}, one can further obtain the following conclusion.

\begin{cor}\label{c5.2}
Let $\cx$ and $\cy$ be Ahlfors $n_1$-regular and $n_2$-regular spaces with $n_1,\,n_2\in(0,\,\fz)$, respectively.
Let  $f$ be a quasisymmetric mapping from $\cx$ onto $\cy$,
and assume that $f$ and $f^{-1}$ are absolutely continuous and 
$J_f\in\cb_{r}(\cx)$ for some $r\in(1,\,\fz]$. 
Let $s_i\in(0,\,n_i)$ with $i=1,\,2$ satisfy  $n_1/s_1=n_2/s_2$, and $q\in(0,\,\fz]$.
Then  $f$  induces an equivalence between $\dot M^{s_1}_{n_1/s_1,\,q}(\cx)$
and $\dot M^{s_2}_{n_2/s_2,\,q}(\cy)$.
\end{cor}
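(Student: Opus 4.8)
The plan is to run the proof of Theorem \ref{t1.3} essentially word for word, replacing the Euclidean inputs --- the change of variables of Lemma \ref{l5.1}, the distortion estimates of Proposition \ref{p5.2}, the fractional Poincar\'e inequality of Lemma \ref{l2.3}, and the relations \eqref{e5.4}, \eqref{e5.5} --- by their counterparts for quasisymmetric mappings between Ahlfors regular spaces, which is exactly the extra ingredient supplied by the proof of Theorem \ref{t5.1}. First I would record these substitutes. By \cite[Proposition 10.6]{hei} the map $f^{-1}$ is $\wz\eta$-quasisymmetric; since $\cx$ and $\cy$ are Ahlfors regular, hence uniformly perfect, one gets $L_f(x,\,r)\sim\ell_f(x,\,r)$ for all $x\in\cx$ and $r>0$, and directly from the definitions $B(f(x),\,\ell_f(x,\,r))\subset f(B(x,\,r))\subset B(f(x),\,L_f(x,\,r))$, so that the Ahlfors $n_2$-regularity of $\cy$ yields $\mu_\cy(f(B(x,\,r)))\sim[L_f(x,\,r)]^{n_2}$, the analogue of \eqref{e5.4}. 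The absolute continuity of $f$ and $f^{-1}$ gives, through the Lebesgue--Radon--Nikodym theorem, that $J_f>0$ $\mu_\cx$-almost everywhere, that $\mu_\cx(E)=0\Leftrightarrow\mu_\cy(f(E))=0$ (so $u\circ f$ is measurable whenever $u$ is, and the set of non-Lebesgue points of $u$ pulls back to a $\mu_\cx$-null set), that $J_{f^{-1}}(f(x))=[J_f(x)]^{-1}$ a.e., and the change of variables $\int_\cx v(f(x))J_f(x)\,d\mu_\cx(x)=\int_\cy v(y)\,d\mu_\cy(y)$ for nonnegative Borel $v$, which is the analogue of Lemma \ref{l5.1}. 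Finally, since $J_f\in\cb_r(\cx)$ and $\mu_\cx$ is doubling, $J_f$ is a Muckenhoupt weight (\cite{st89}); hence $J_f\,d\mu_\cx$ is doubling and obeys $\int_EJ_f\,d\mu_\cx\big/\int_BJ_f\,d\mu_\cx\ls(\mu_\cx(E)/\mu_\cx(B))^\az$ for some $\az\in(0,\,1]$, every ball $B\subset\cx$ and every measurable $E\subset B$ --- the analogue of Proposition \ref{p5.2} --- from which the analogue of \eqref{e5.5} follows by means of \eqref{e5.12}. Transporting this $A_\infty$ property to $\cy$ through $f$ (which, by the above, carries balls of $\cx$ to sets comparable to balls of $\cy$) also shows $J_{f^{-1}}\in\cb_{r'}(\cy)$ for some $r'\in(1,\,\fz]$.

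Because $f^{-1}$ then satisfies the same hypotheses as $f$, it suffices to prove that $u\mapsto u\circ f$ maps $\dot M^{s_2}_{n_2/s_2,\,q}(\cy)$ boundedly into $\dot M^{s_1}_{n_1/s_1,\,q}(\cx)$. So I would fix $u$ with $\|u\|_{\dot M^{s_2}_{n_2/s_2,\,q}(\cy)}=1$ and $\vec g\in\bd^{s_2}(u)$ with $\|\vec g\|_{L^{n_2/s_2}(\cy,\,\ell^q)}\le2$, write $n\equiv n_1/s_1=n_2/s_2$, and --- by Theorem \ref{t2.1} --- reduce to producing $\vec h=\{h_j\}_{j\in\zz}$ that is a constant multiple of an element of $\wz\bd^{s_1,\,s_1,\,N}(u\circ f)$ for some $N\in\zz$, with $\|\vec h\|_{L^{n_1/s_1}(\cx,\,\ell^q)}\ls1$. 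Following the proof of Theorem \ref{t1.3} I would split into the three cases $q=n$, $q\in(n,\,\fz]$, $q\in(0,\,n)$. For $q=n$ one takes $h_j(x)\equiv2^{js_1}[L_f(x,\,2^{-j})]^{s_2}\bint_{f(B(x,\,2^{-j}))}\wz g_{L_f(f^{-1}(y),\,2^{-j})}(y)\,d\mu_\cy(y)$ and argues exactly as in Case (i) of Theorem \ref{t1.3} (this step is, in fact, the proof of Theorem \ref{t5.1}), using only the doubling and power distortion of $f$, the analogues of \eqref{e5.4} and \eqref{e5.5}, and Fubini's theorem. For $q\in(n,\,\fz]$ one bounds the same $h_j$ pointwise by $\{\cm([\wz g_{L_f(\cdot,\,2^{-j})}\circ f]^p[J_f]^{ps_1/n_1})\}^{1/p}$ with $p\in(1,\,n_1/s_1)$ chosen so that $\frac{p(n_1-s_1)}{n_1(p-1)}<r$, using H\"older's inequality and the reverse H\"older inequality $J_f\in\cb_r(\cx)$, and then applies the Fefferman--Stein vector-valued maximal inequality (valid on spaces of homogeneous type), the analogue of \eqref{e5.5}, and the change of variables. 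For $q\in(0,\,n)$ one fixes $\dz\in(0,\,1]$, $p\in(1,\,q/\dz)$ with $\frac{(n_1-s_1\dz)p}{n_1(p-1)}<r$, and $\ez,\ez'\in(\max\{n_1(q-1)/q,\,0\},\,s_1)$ with $\ez<\ez'$ as in Case (iii) (with $r$ in the role of $R_f$), establishes the pointwise estimate corresponding to \eqref{e5.7} --- whose proof, based on medians and non-increasing rearrangements in the spirit of \cite{f90}, carries over to any doubling metric measure space --- and then combines the RD-space version of Lemma \ref{l2.3} (available because an Ahlfors $n_1$-regular space is an RD-space; see Section \ref{s4} and \cite[Lemma 4.1]{kyz09}, applied with homogeneous dimension $n_1$), the reverse H\"older inequality, the Fefferman--Stein inequality and the change of variables. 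In each case the computation closes to $\|\vec h\|_{L^{n_1/s_1}(\cx,\,\ell^q)}\ls\|\vec g\|_{L^{n_2/s_2}(\cy,\,\ell^q)}\ls1$.

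The main obstacle is Case (iii): I expect the bookkeeping with two distinct dimensions $n_1\ne n_2$ to be harmless (the constraint $n_1/s_1=n_2/s_2$ makes the exponents match, and every inequality uses only Ahlfors regularity with the respective exponent), and Cases (i)--(ii) to be a direct transcription; but one must check carefully that the subtle pointwise bound \eqref{e5.7} --- which in $\rn$ intertwines medians, non-increasing rearrangements and the reverse H\"older inequality --- genuinely survives on an Ahlfors regular space, and that the fractional Poincar\'e-type estimate of Lemma \ref{l2.3} holds there with the correct dependence on $n_1$. The derivation of $J_{f^{-1}}\in\cb_{r'}(\cy)$, needed to make the reduction to the single mapping $f$ legitimate, is a secondary point that should nonetheless be spelled out.
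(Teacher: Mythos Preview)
Your proposal is correct and is exactly the approach the paper takes: the paper's proof consists of the single remark that the result follows by ``combining the proofs of Theorems \ref{t5.1} and \ref{t1.3}'', together with the observation (stated after the corollary) that the hypotheses force $J_{f^{-1}}\in\cb_{r'}(\cy)$ for some $r'\in(1,\,\fz]$, so that the reduction to a single direction is legitimate. You have simply unpacked this sketch in detail --- the three-case split, the substitutes for Proposition \ref{p5.2}, Lemma \ref{l5.1}, \eqref{e5.4}, \eqref{e5.5}, and the RD-space version of Lemma \ref{l2.3} --- which is precisely what the combination of those two proofs amounts to.
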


In Corollary \ref{c5.2}, $f$ acts a composition operator. Moreover, 
with the assumptions of Corollary \ref{c5.2}, by Lebesgue-Radon-Nykodym Theorem and \cite{st89}, 
we have that $J_{f^{-1}}(y)=[J_f(f^{-1}(y))]^{-1}$ for almost all $y\in\cy$, 
and hence $J_{f^{-1}}\in\cb_{r'}(\cy)$ for some $r'\in(1,\,\fz]$.

\noindent Pekka Koskela

\noindent Department of Mathematics and Statistics,
P. O. Box 35 (MaD),
FI-40014, University of Jyv\"askyl\"a,
Finland
\smallskip

\noindent{\it E-mail address}:   \texttt{pkoskela@maths.jyu.fi}

\bigskip

\noindent Dachun Yang

\medskip

\noindent School of Mathematical Sciences,
 Beijing Normal University,
 Laboratory of Mathematics and Complex Systems, Ministry of Education,
Beijing 100875, People's Republic of China

\smallskip

\noindent{\it E-mail address}: \texttt{dcyang@bnu.edu.cn}

\bigskip

\noindent Yuan Zhou

\medskip

\noindent Department of Mathematics and Statistics,
P. O. Box 35 (MaD),
FI-40014, University of Jyv\"askyl\"a,
Finland

\smallskip

\noindent{\it E-mail address}:  \texttt{yuan.y.zhou@jyu.fi}

\end{document}